\documentclass[11pt]{amsart}   	
\usepackage{geometry}             
\usepackage[dvipsnames]{xcolor}   		             		
\usepackage{graphicx}				
\usepackage{amssymb,mathrsfs}
\usepackage{amssymb,amsthm,amsmath,stmaryrd}
\SetSymbolFont{stmry}{bold}{U}{stmry}{m}{n}
\usepackage{tikz}
\usepackage{tikz-cd}
\usepackage{accents,upgreek,enumerate}
\usepackage[headings]{fullpage}
\usepackage{bm}
\usepackage[all]{xy}
\usepackage{caption}


\newenvironment{customthm}[1]
  {\innercustomthm}
  {\endinnercustomthm}
  
\usetikzlibrary{calc}
\usetikzlibrary{fadings}
\usetikzlibrary{decorations.pathmorphing}
\usetikzlibrary{decorations.pathreplacing}

\newcounter{marginnote}
\setcounter{marginnote}{0}

\def\overnorm#1{\overline{#1}\vphantom{#1}}

\setlength{\parskip}{4pt}

\DeclareMathAlphabet{\mathpzc}{OT1}{pzc}{m}{it}

\usepackage[backref]{hyperref}
\hypersetup{
  colorlinks   = true,          
  urlcolor     = violet,          
  linkcolor    = blue,          
  citecolor   = purple             
}

\newtheorem{theorem}{Theorem}[subsection]
\newtheorem{corollary}[theorem]{Corollary}
\newtheorem{lemma}[theorem]{Lemma}
\newtheorem{proposition}[theorem]{Proposition}

\newtheorem{quasi-theorem}[theorem]{Quasi-Theorem}

\theoremstyle{definition}
\newtheorem{definition}[theorem]{Definition}
\newtheorem{warning}[theorem]{Warning}

\newtheorem{question}[theorem]{Question}

\newtheorem{blank remark}[theorem]{}

\theoremstyle{remark}
\newtheorem{rem1}[theorem]{Remark}
\newenvironment{remark}{\begin{rem1}\em}{\end{rem1}}

\newtheorem{not1}[theorem]{Notation}

           
\newcommand{\CC} {{\mathbf C}}          
            
\newcommand{\NN} {{\mathbf N}}		
\newcommand{\PP}{\mathbf{P}}         
		
\newcommand{\RR} {{\mathbf R}}		
\newcommand{\ZZ} {{\mathbf Z}}	
\newcommand{\GG} {{\mathbf G}}		



\def\Gm{\mathbf{G}_m}

\def\expdim{\operatorname{exp.dim}}

\newcommand{\Hom}{\operatorname{Hom}}

\DeclareMathOperator{\val}{val}
 
\DeclareMathOperator{\res}{res}

\DeclareMathOperator{\spec}{Spec}





\newcommand{\cal}{\mathcal}

\def\cM{{\cal M}}


\def\fM{\mathfrak{M}}



\newcommand{\plC}{\kern1pt\raisebox{-.5pt}{\scalebox{0.8}[1.3]{$\sqsubset$}}}



\newcommand{\Mbar}{\overnorm{\cM}\vphantom{\cM}}

\def\trop{\mathrm{trop}}
\def\an{\mathrm{an}}

\newcommand{\Spec}{\operatorname{Spec}}






\makeatletter
\def\blfootnote{\xdef\@thefnmark{}\@footnotetext}
\makeatother

\title[Moduli of stable maps in genus one: logarithmic theory]{{\larger M}oduli of stable maps in genus one {\it \&} logarithmic geometry II}

\date{}

\author[Ranganathan]{Dhruv Ranganathan}
\address{Dhruv Ranganathan \\ Department of Pure Mathematics and Mathematical Statistics\\ University of Cambridge}
\email{\href{mailto:dr508@cam.ac.uk}{dr508@cam.ac.uk}}

\author[Santos-Parker]{Keli Santos-Parker}
\address{Keli Santos-Parker \\ Medical School\\ University of Michigan}
\email{\href{mailto:parkerks@med.umich.edu}{parkerks@med.umich.edu}}

\author[Wise]{Jonathan Wise}
\address{Jonathan Wise \\ Department of Mathematics\\
University of Colorado}
\email{\href{mailto:jonathan.wise@colorado.edu}{jonathan.wise@colorado.edu}}

\begin{document}

\begin{abstract}
This is the second in a pair of papers developing a framework to apply logarithmic methods in the study of stable maps and singular curves of genus $1$. This volume focuses on logarithmic Gromov--Witten theory and tropical geometry. We construct a logarithmically nonsingular and proper moduli space of genus $1$ curves mapping to any toric variety. The space is a birational modification of the principal component of the Abramovich--Chen--Gross--Siebert space of logarithmic stable maps and produces logarithmic analogues of Vakil and Zinger's genus one reduced Gromov--Witten theory. We describe the non-archimedean analytic skeleton of this moduli space and, as a consequence, obtain a full resolution to the tropical realizability problem in genus $1$.
\end{abstract}

\maketitle

\vspace{-0.2in}

\setcounter{tocdepth}{1}
\tableofcontents

\section{Introduction}


This paper is the second in a pair, exploring the interplay between tropical geometry, logarithmic moduli theory, stable maps, and  moduli spaces of genus~$1$ curves. In the first volume, we used this interplay to construct new nonsingular moduli spaces compactifying the space of elliptic curves in projective space via \textbf{radially aligned} stable maps and quasimaps. In this paper, we focus on applications to logarithmic Gromov--Witten theory and tropical geometry.

\noindent
{\bf I. Realizability of tropical curves.} We give a complete characterization of genus $1$ tropical maps that can be realized as tropicalizations of genus $1$ curves mapping to tori, completing a study initiated in Speyer's thesis. We show that a combinatorial condition identified by Baker--Payne--Rabinoff is always sufficient. Our proof is independent of these previous results, and is based on the geometry of logarithmic maps.

\noindent
{\bf II. Logarithmic stable maps.} We construct a toroidal moduli space parametrizing maps from pointed genus $1$ curves to any toric variety with prescribed contact orders along the toric boundary. This is a desingularization of the principal component of the space of logarithmic stable maps. The boundary complex of this compactification is identified as a space of realizable tropical maps.

\subsection{Superabundant tropical geometries}
The realization problem is the crux of the relationship between tropical geometry to algebraic geometry, and is unavoidable in enumerative applications. Given an abstract tropical curve\footnote{The first author continues his efforts to popularize Dan Abramovich's convention that algebraic curves be denoted by $C$, $\mathscr C$, while tropical curves be denoted $\plC$, approximating their appearance in nature.} $\plC$ of genus $g$ and a balanced piecewise linear map
\[
F: \plC\to \RR^r,
\]
we ask, \textit{does there exist a non-archimedean field $K$ extending $\CC$, a smooth algebraic curve $C$ over $K$ and a map
\[
\varphi: C\to \mathbf G_m^r,
\]
such that $\varphi^{\trop}$ coincides with $F$?} 

{
When $\plC$ has genus $0$, the only obstruction to lifting is the local balancing condition, and all tropical curves satisfying that condition are realizable.  This is reflected in the logarithmic smoothness of the moduli space of genus $0$ logarithmic maps~\cite{NS06,R15b,Tyo12}. In genus $1$, nonlocal obstructions already appear
}%
for maps $\plC\to \RR$. The obstructions appear when the circuit of $\plC$ is contained in a proper affine subspace of $\RR^r$. Speyer discovered a sufficient condition for realizability in 2005~\cite{Sp-thesis,Sp07}.  A weaker necessary condition was identified in~\cite[Section 6]{BPR16}.  We provide a characterization of the realizable tropical curves in genus $1$ in Theorem~\ref{thm: tropical} in terms of the geometry of the skeleton of an analytic space of maps.

Let $\Gamma$ be a marked tropical curve of genus $1$ with a unique vertex and $n$ half-edges. Fix a balanced map $\Gamma\to \RR^r$. Let $\cM_{\Gamma}(\mathbf G_m^r)$ be the moduli space of maps 
\[
\varphi: C\to \mathbf G_m^r,
\]
where $C$ is a non-compact smooth algebraic curve of genus $1$ with $n$ punctures, and the vanishing orders at infinity of these punctures are specified by the slopes along the edges of $\Gamma$ in $\RR^r$. Let $W_\Gamma(\RR^r)$ be the corresponding set of tropical maps
\[
\plC\to \RR^r
\]
whose recession fan is given by $\Gamma\to \RR^r$, and satisfy the \textbf{well-spacedness condition}, as defined in Section~\ref{sec: tropical-moduli}.\footnote{We caution the reader that the meaning attributed by Speyer to well-spacedness is stronger than the one we use here; see Warning~\ref{warning: diff-defs}.} This set can be given the structure of a generalized cone complex.

Given a map $\varphi:\mathscr C\to \mathbf G_m^r$ over a valued field, one obtains a balanced piecewise linear map from a Berkovich skeleton $\plC$ of the punctured general fiber curve $\mathscr C_\eta$ to $\RR^r$, i.e., to the skeleton of the torus~\cite{R16}. This piecewise linear map is the \textbf{tropicalization} of $\varphi$ and is denoted $\varphi^{\trop}$. 

\begin{customthm}{A}\label{thm: tropical}
There exists a continuous and proper tropicalization map
\[
\trop: \cM_\Gamma^{\an}(\mathbf G_m^r)\to W_\Gamma(\RR^r)
\]
sending a map $[\varphi]$ over a valued field to its tropicalization. There is generalized cone complex $P_\Gamma(\RR^r)$ and a finite morphism
\[
\trop_{\mathfrak S}: P_\Gamma(\RR^r) \to W_\Gamma(\RR^r),
\]
which is an isomorphism upon restriction to each cone of the source. The degree of this finite morphism is explicitly computable and the complex $P_{\Gamma}(\RR^r)$ is a skeleton of the analytic moduli space $\cM_\Gamma^{\an}(\mathbf G_m^r)$.
\end{customthm}

{
The theorem is proved as Theorem~\ref{thm:nonarch} of the text.
}%

The statement that the tropicalization has a finite cover that is a skeleton is a toroidal version of the {sch\"on} condition, frequently cited in tropical geometry. The skeleton $P_\Gamma(\RR^r)$ functions as a {parametrizing complex} for the tropicalization, as in work of Helm and Katz~\cite{HK12,Tev07}. 



\subsection{Logarithmic stable maps} Our tropical investigation leads naturally to an understanding of the geometry of space of logarithmic stable maps to toric varieties in genus $1$. The open moduli problem we consider is that of maps
\[
(C,p_1,\ldots, p_m) \to Z,
\]
where $C$ is a smooth pointed curve of genus $1$, the target $Z$ is a toric variety, and the contact orders of the points $p_i$ with the boundary divisors on $Z$ is fixed. There is a natural modular compactification of this space via the theory of logarithmic stable maps, due to Abramovich--Chen and Gross--Siebert~\cite{AC11,Che10,GS13}. When the genus of the source curve is $0$, the resulting moduli space is logarithmically smooth, but in genus $1$ can be highly singular and non-equidimensional. We use the insights of Theorem~\ref{thm: tropical} to construct a logarithmically smooth modular compactification, in parallel with the desingularization of the ordinary stable maps space due to Vakil and Zinger~\cite{RSW17A,VZ08}.

Let $Z$ be a proper complex toric variety and $\mathscr L_\Gamma(Z)$ the moduli space of genus $1$ logarithmic stable maps to $Z$ with discrete data $\Gamma$, i.e., $\Gamma$ records the genus and the contact orders of the marked points with the toric boundary of $Z$. Let $\mathscr L_\Gamma^\circ(Z)$ be the locus of parametrizing maps {of positive degree} from smooth domains, and let $\overnorm{\mathscr L_\Gamma^\circ}(Z)$ be the closure.

\begin{customthm}{B}\label{thm: toric-targets}
Consider the following data as a moduli problem on logarithmic schemes:
\begin{enumerate}
\item a family of $n$-marked, radially aligned logarithmic curves $C\to S$,
\item a logarithmic stable map $f:C\to Z$ with contact order $\Gamma$,
\end{enumerate}
such that the map $f$ is \textbf{well-spaced} (see Definition~\ref{def:well-spaced}). This moduli problem is represented by a proper and logarithmically smooth stack with logarithmic structure $\mathcal W_\Gamma(Z)$ and the natural morphism
\[
\mathcal W_\Gamma(Z)\to \overnorm{\mathscr L_\Gamma^\circ}(Z)
\]
is proper and birational.
\end{customthm}

{See Theorem~\ref{thm: realizability} for the proof.}

The well-spacedness property above is efficiently stated in tropical language, and this is done later in the paper. At a first approximation it may be thought of as forcing a factorization property after composing $C\to Z$ with any rational map $Z\dashrightarrow \PP^1$ induced by a character. These logarithmic maps are precisely the ones that have \textbf{well-spaced} tropicalizations. A prototype for practical calculations on this space may be found in~\cite{LR15}.

\subsection{Motivation for the construction} The combinatorics of logarithmic stable maps are essentially part of tropical geometry. Indeed, if the variety $Z$ is a toric variety, taken with its toric boundary, the analytification of the moduli space of logarithmic maps maps continuously to a polyhedral complex parametrizing tropical curves~\cite{R16}. The connection is especially transparent in genus $0$, see~\cite{R15b}. In genus $1$, the tropical realizability problem can be used to predict the desingularization above, as we now explain. The moduli space $\mathscr L_\Gamma(Z)$ of genus $1$ logarithmic stable maps is highly singular, however, it maps naturally to a logarithmically smooth Artin stack. More precisely, if $\mathscr A_Z = [Z/T]$ is the Artin fan of $Z$ obtained by performing a stack quotient on $Z$ by its dense torus, there is a natural map
\[
\mathscr L_\Gamma(Z)\to \mathscr L_\Gamma(\mathscr A_Z),
\]
where the latter is the space of prestable logarithmic maps to the Artin fan. This space is a logarithmically smooth Artin stack~\cite{AW}. Moreover, the toroidal skeleton of this space is naturally identified with the moduli space of all (not necessarily realizable {or even balanced}) tropical maps from genus $1$ curves to the fan of $Z$, see~\cite{R16}. The locus of realizable curves is a sublocus in the moduli space. After subdividing this cone complex, this sublocus is supported on a subcomplex. This subdivision induces a birational modification of $\mathscr L_\Gamma(\mathscr A_Z)$, and thus a modification of $\mathscr L_\Gamma(Z)$. This modification can naturally be identified with the moduli of well-spaced logarithmic maps $\mathcal W_\Gamma(Z)$ defined above. The radial alignments developed in~\cite{RSW17A} and recalled in Section~\ref{sec: prelims-from-prequel} give rise to the modular interpretation.

The construction of $\mathcal W_\Gamma(Z)$ is not a formal lifting of our previous results on ordinary stable maps to the logarithmic category~\cite{RSW17A}. Given an absolute genus $1$ stable map $[C\to \PP^r]$, if no genus $1$ subcurve is contracted, then $[C\to \PP^r]$ is a smooth point of the moduli space. However, for a toric variety $Z$ and a genus $1$ logarithmic map $[C\to Z]$, the deformations of the map can be obstructed even if no component of $C$ is contracted. This is true even if $Z= \PP^r$ with its toric logarithmic structure. This behaviour is akin to the genus $1$ absolute stable maps theory for semipositive targets. While the tangent bundle of $\PP^r$ is ample, the logarithmic tangent bundle of a toric variety is trivial. This allows for a larger space of obstructions to deforming genus $1$ logarithmic maps than in the absolute theory. We overcome this by identifying and forcing the stronger factorization property above. 

\subsection{Tropical enumerative geometry and realizability} The realizability problem for tropical curves is a combinatorial shadow of the problem of characterizing the closure of the main component in the space of logarithmic maps. The difficulty of the problem has limited tropical enumerative techniques to low target dimensions~\cite{BBM14,CJM1,CMR14a, Mi03} or to genus $0$ curves~\cite{Gro14,Gro15,MR16,NS06,R15b}.

In the higher genus, higher dimensional situation, there are two directions in which one may generalize the picture above. The first is to develop a systematic method to decompose logarithmic Gromov--Witten invariants, as a sum of virtual invariants over tropical curves~\cite{ACGS15,Par11,R19}. The second is to analyze the tropical lifting problem and produce a ``reduced'' curve counting theory that captures the principal component contribution to the virtual count. This paper addresses the second of these in genus one. The realizability theorem in genus $1$ allows us to decompose these reduced invariants of any toric variety over tropical curves. The degeneration formula for these invariants is work that we hope to return to. Note that the analogous problem for smooth pairs has recently been treated in~\cite{BNR19}. 

There have been a number of interesting partial results on tropical realizability in the last decade, thanks to the efforts of many~\cite{BPR16,CFPU,JR17,KatLift,Mi03,Ni09,NS06,R16,R15a,Sp07}. The genus $1$ story alone has seen heavy interest. Speyer identified the sufficiency of a strong form of well-spacedness condition for superabundant genus $1$ tropical curves using Tate's uniformization theory. Using the group law on the analytification of an elliptic curve, Baker--Payne--Rabinoff show that a weaker condition was necessary. The existence of genus $1$ tropical curves which failed Speyer's condition but were nonetheless realizable was established in~\cite{R16}. 

In higher genus, very few results are known. That non-superabundant higher genus tropical curves are realizable was established by Cheung--Fantini--Park--Ulirsch~\cite{CFPU}, and limits of realizable curves can be shown to be realizable~\cite{R15a,R16}. Katz showed that the logarithmic tangent/obstruction complex for degenerate maps gives rise to necessary combinatorial conditions for realizability in higher genus, including a version of well-spacedness~\cite{KatLift}. These methods do not prove sufficiency in any cases. A sufficient condition for realizability for some superabundant chain of cycles geometries has recently been shown to hold and used to establish new results in Brill--Noether theory~\cite{JR17}.

\subsection{User's guide} We have written this paper so it may be read independently from the prequel, in which the space of ordinary stable maps to $\mathbf P^r$ was considered. In Section~\ref{sec: prelims-from-prequel} we recall the preliminary results on radial alignments and their contractions from~\cite{RSW17A}.  The moduli space of well-spaced logarithmic maps is constructed in Section~\ref{sec: well-spaced-logmaps} and the logarithmic unobstructedness appears as Theorem~\ref{thm:toric-log-smooth}. The tropical well-spacedness condition is discussed and defined precisely in Section~\ref{sec: well-spacedness}. Finally, tropical realizability results are restated in Theorem~\ref{thm: realizability} and proved in Section~\ref{sec: realizability-proof}.

\subsection*{Acknowledgements} Thanks are due to Dan Abramovich, Dori Bejleri, Sebastian Bozlee, Renzo Cavalieri, Dave Jensen, Eric Katz, Diane Maclagan, Davesh Maulik, David Speyer, Jeremy Usatine, and Ravi Vakil for helpful conversations and much encouragement. Special thanks are due to Sam Payne, who first explained the well-spacedness condition to D.R., and pointed out that a deformation theoretic understanding of the phenomenon was likely to be valuable. Luca Battistella and Navid Nabijou kindly pointed out errors in an earlier version of the paper, and we are grateful to them. The referee also provided valuable feedback. Finally, the authors learned that they were working on related ideas after a seminar talk of D.R. at Colorado State University when J.W. was in the audience; we thank Renzo Cavalieri for creating that opportunity. 

\subsection*{Funding} D.R. was partially supported by NSF grant DMS-1128155 (Institute for Advanced Study) and J.W. was partially supported by NSA Young Investigator's Grants H98230-14-1-0107 and H98230-16-1-0329.

\section{Preliminaries}

In this section, we recall some preliminaries on singularities of genus $1$ and on logarithmic and tropical geometry. There is some overlap between this section and the preliminary material appearing in the prequel to this article~\cite{RSW17A}, but we opt to include it for a more self-contained presentation.

\subsection{Genus 1 singularities}\label{sec: genus-1-singularities} 
Let $C$ be a reduced curve over an algebraically closed field $k$. For an isolated curve singularity $(C,p)$ with normalization $\pi: (\widetilde C,p_1,\ldots, p_m)\to (C,p)$, recall that $m$, the cardinality of $\pi^{-1} p$, is called the \textbf{number of branches of the singularity}. The \textbf{$\delta$-invariant} is defined as
\[
\delta: = \dim_k \bigl( \pi_\star (\mathscr O_{\widetilde C})/\mathscr O_C \bigr).
\]

Based on these two invariants, one defines the genus of $(C,p)$ as
\[
g = \delta-m+1.
\]

We will frequently make use of the \textbf{seminormalization} of $(C,p)$ in our arguments. The \textbf{seminormalization} is a partial resolution of $(C,p)$ to a singularity of genus $0$ that is homeomorphic to $(C,p)$. Explicitly, equip the underlying topological space of $(C,p)$ with the subring $\mathscr A$ of regular functions on the normalization $\widetilde C$ that are well-defined on the underlying topological space of $C$. In particular, there are $g$ additional conditions required for a function in $\mathscr A$ to descend to $(C,p)$, i.e.,
\begin{equation*}
g = \dim_k \bigl( \mathscr A / \mathscr O_C \bigr) .
\end{equation*}

Let $E$ be a proper Gorenstein curve of genus~$1$, smooth away from a unique genus $1$ singularity.  Let $\nu : F \to E$ be the seminormalization and let $\mu : G \to F$ be the normalization.  We have inclusions:
\begin{gather*}
\mathscr O_E \subset \nu_\star \mathscr O_F \subset \nu_\star \mu_\star \mathscr O_G \subset K \\
J \supset \omega_E \supset \nu_\star \omega_F \supset \nu_\star \mu_\star \omega_G
\end{gather*}
Here $K$ is the sheaf of meromorphic functions on $E$ and $J$ is the sheaf of meromorphic differentials.  For each $X = E, F, G$, the pairs $\omega_X$ and $\mathscr O_X$ are dual to other another with respect to the residue pairing $K \otimes J \to k$, in the sense that each is the annihilator of the other~\cite[Proposition~1.16~(ii)]{AK}.

Consider the exact sequence~\eqref{eqn:seminorm-seq}:
\begin{equation} \label{eqn:seminorm-seq}
0 \to \mathscr O_E \to \nu_\star \mathscr O_F \to \nu_\star(\mathscr O_F) / \mathscr O_E \to 0
\end{equation}
In the long exact cohomology sequence~\eqref{eqn:seminorm-les}
\begin{equation} \label{eqn:seminorm-les}
0 \to H^0(E, \mathscr O_E) \to H^0(F, \mathscr O_F) \to \nu_\star(\mathscr O_F) / \mathscr O_E \to H^1(E, \mathscr O_E) \to H^1(F, \mathscr O_F)
\end{equation}
the map $H^0(E, \mathscr O_E) \to H^0(F, \mathscr O_F)$ is an isomorphism because both $E$ and $F$ are proper, connected, and reduced; furthermore $H^1(F, \mathscr O_F) = 0$ since $F$ has genus~$0$.  By Serre duality, $H^1(E, \mathscr O_E)$ is dual to $H^0(E, \omega_E)$.  Since both are $1$-dimensional, the choice of a nonzero $\alpha \in H^0(E, \omega_E)$ induces an isomorphism $H^1(E, \mathscr O_E) \to k$.  The composition
\begin{equation*}
\nu_\star(\mathscr O_F) / \mathscr O_E \to H^1(E, \mathscr O_E) \to k
\end{equation*}
may be identified with the residue pairing, sending $f \bmod{\mathscr O_E}$ to $\res f\alpha$.  This follows, for example, by the construction of the dualizing sheaf in~\cite[Remark~1.9 and Remark~1.12]{AK}.

We know that $\omega_F / \mu_\star(\omega_G)$ is spanned by the differentials
\begin{equation} \label{eqn:omega-F}
\frac{dx_i}{x_i} - \frac{dx_j}{x_j}
\end{equation}
where the $x_i$ are local coordinates of the branches of $E$ at the singular point.  As $\omega_E / \nu_\star(\omega_F)$ is $1$-dimensional, $\omega_E$ is generated relative to $\nu_\star (\omega_F)$ by a differential of the following form:
\begin{equation} \label{eqn:omega-E}
\sum_i \frac{c_i d x_i}{x_i^2} + \frac{c' dx_1}{x_1} 
\end{equation}
If $f \in \mathscr O_E$ has the expansion $f(0) + b_i x_i + \cdots$ on the $i$th component of $F$ then this differential imposes the constraint
\begin{equation*}
c' f(0) + \sum b_i c_i = 0 .
\end{equation*}
In order for $E$ to be Gorenstein, $\omega_E$ must be a line bundle, so the generators~\eqref{eqn:omega-F}  of $\omega_F$ must be multiples of the generator~\eqref{eqn:omega-E}.  This immediately implies $c' = 0$ and that all of the $c_i$ are nonzero.  Conversely, if $c' = 0$ and all of the $c_i$ are nonzero, then $c_j x_i - c_i x_j \in \mathscr O_E$ and
\begin{equation*}
(c_j x_i - c_i x_j) \sum_k \frac{c_k d x_k}{x_k^2} = c_j c_i \frac{d x_i}{x_i} - c_i c_j \frac{d x_j}{x_j}
\end{equation*}
implies that the generators~\eqref{eqn:omega-F} are multiples of~\eqref{eqn:omega-E}.  This proves the following proposition:

\begin{proposition} \label{prop:dualizing-generator}
If $E$ is a Gorenstein curve with a genus~$1$ singularity then $\omega_E$ is generated in a neighborhood of its singular point by a meromorphic form~\eqref{eqn:omega-E}, with $c' = 0$, where the $x_i$ are local parameters for the branches of $E$ at the singular point.
\end{proposition}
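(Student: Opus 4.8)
The plan is to work locally at the genus-$1$ singularity $p \in E$ and to extract the normal form from the two-step resolution by the seminormalization $\nu\colon F \to E$ and the normalization $\mu\colon G \to F$, using the residue duality recalled above: each term of $\mathscr O_E \subseteq \nu_\star\mathscr O_F \subseteq \nu_\star\mu_\star\mathscr O_G$ is the annihilator, under $K \otimes J \to k$, of the corresponding term of $\nu_\star\mu_\star\omega_G \subseteq \nu_\star\omega_F \subseteq \omega_E$. First I would record the middle terms explicitly near $p$, with local coordinates $x_i$ on the branches of $G$: the ring $\nu_\star\mathscr O_F$ consists of the tuples $(f_1,\dots,f_m)$ sharing a common value at the origin, $\nu_\star\mu_\star\omega_G$ of the tuples of holomorphic differentials, and $\nu_\star\omega_F$ of the latter together with the span of the forms \eqref{eqn:omega-F}, these being precisely the meromorphic differentials pairing trivially with the ``continuous'' functions. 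The genus-$1$ hypothesis is exactly the statement that $\nu_\star\mathscr O_F/\mathscr O_E$ has length $1$ at $p$ --- the definition of the genus of the singularity via seminormalization, equivalently the cohomology sequence \eqref{eqn:seminorm-les} in the proper case --- so dually $\omega_E/\nu_\star\omega_F$ is $1$-dimensional.

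Hence $\omega_E = \nu_\star\omega_F + k\eta$ for a single extra differential $\eta$, and the main step is to show $\eta$ may be taken of the shape \eqref{eqn:omega-E}. For this I would use that a genus-$1$ Gorenstein curve singularity has conductor $\prod_i(x_i^2)$ in its normalization, hence contains the square of the maximal ideal of $\nu_\star\mathscr O_F$, so that every local section of $\omega_E$ (the annihilator of $\mathscr O_E$) has poles of order at most $2$ on each branch; the conductor statement follows because a conductor $\prod_i(x_i^{a_i})$ with $\sum_i a_i = 2\delta = 2m$ (using $\delta = m$ in genus $1$) and each $a_i \ge 1$ is forced to have all $a_i = 2$ by a short induction on the number of branches, multiplicativity of $\mathscr O_E$ ruling out $a_i = 1$, with base cases the cusp and tacnode. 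Since everything of polar order $\le 1$ is already accounted for by holomorphic forms together with the forms \eqref{eqn:omega-F}, the extra generator $\eta$ can then be written in the form \eqref{eqn:omega-E}, and $\eta \in \omega_E$ --- i.e.\ $\eta$ pairing trivially with $\mathscr O_E$ under residues --- becomes, as computed above, the constraint $c'f(0) + \sum_i b_i c_i = 0$ for all $f \in \mathscr O_E$. I expect this normal-form step to be the main obstacle: it is the only place the genus-$1$ and Gorenstein hypotheses do real work, and everything after it is coefficient bookkeeping.

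The two assertions of the proposition are then immediate. Feeding $f = 1 \in \mathscr O_E$ into the constraint (so $f(0) = 1$ and all $b_i = 0$) gives $c' = 0$; this is just the statement that $\eta$, lying in $\omega_E$, has total residue zero while $1 \in \mathscr O_E$. For the nonvanishing of the $c_i$, I would use the Gorenstein hypothesis once more: $\omega_E$ is invertible, hence generated near $p$ by one section, which --- being the one direction new relative to $\nu_\star\omega_F$ --- we may take to be $\eta$; so each form \eqref{eqn:omega-F} equals $g\eta$ for some $g \in \mathscr O_E$. Comparing polar parts branch by branch gives $g(0)c_k = 0$ for all $k$, and $\eta \ne 0$ forces some $c_k \ne 0$, hence $g(0) = 0$; matching the coefficient of $\tfrac{dx_i}{x_i}$ then gives $b_i c_i = 1$, so $c_i \ne 0$, and as $i$ was arbitrary all of the $c_i$ are nonzero. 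Finally --- to see the conditions are not vacuous and, with the above, characterise exactly the Gorenstein singularities over a given seminormalization --- the identities $(c_j x_i - c_i x_j)\eta = c_i c_j\bigl(\tfrac{dx_i}{x_i} - \tfrac{dx_j}{x_j}\bigr)$ and $x_i^2 \cdot \tfrac{c_i\, dx_i}{x_i^2} = c_i\, dx_i$ together exhibit $\omega_E = \mathscr O_E\eta$ whenever $c' = 0$ and all $c_i \ne 0$.
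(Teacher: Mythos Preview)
Your proof is correct and follows essentially the same route as the paper: residue duality between the two filtrations, one-dimensionality of $\omega_E / \nu_\star\omega_F$, and the line-bundle condition forcing the $c_i \neq 0$. You add two modest improvements --- a conductor argument for the pole-order bound (the paper simply asserts the shape~\eqref{eqn:omega-E} without justification) and a direct derivation of $c' = 0$ from pairing $\eta$ against $1 \in \mathscr O_E$, which is cleaner than the paper's argument and does not even require the Gorenstein hypothesis for that particular step.
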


By consideration of the residue condition imposed by the form~\eqref{eqn:omega-E}, we can also obtain a local description of the Gorenstein, genus~$1$ curve singularities.  A more conceptual proof of this result can be found in \cite[Proposition~A.3]{Smyth}.

\begin{proposition}
For each integer $m\geq 0$, there exists a unique Gorenstein singularity $(C,p)$ of genus $1$ with $m$ branches. If $m = 1$ then $(C,p)$ can be identified with the cusp $\mathbf{V}(y^2-x^3)$, if $m = 2$ then $(C,p)$ can be identified with the ordinary tacnode $\mathbf V(y^2-yx^2)$, and if $m\geq 3$, then $(C,p)$ is the germ at the origin of the union of $m$ general lines through the origin in $\mathbf A^{m-1}$. 
\end{proposition}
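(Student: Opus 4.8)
The plan is to analyze, for each $m \geq 0$, the residue condition cut out on $\nu_\star \mu_\star \mathscr O_G$ by the form~\eqref{eqn:omega-E} (with $c' = 0$), and show it determines the singularity up to isomorphism. We already know from Proposition~\ref{prop:dualizing-generator} that a Gorenstein genus~$1$ singularity with $m$ branches has $\omega_E$ generated near $p$ by a form $\sum_i c_i\, dx_i / x_i^2$ with all $c_i \neq 0$; the seminormalization $F$ is the union of the $m$ smooth branches glued transversally at a point (an ordinary $m$-fold point), since $F$ has genus~$0$ and is seminormal. Thus the question is: which subsheaves $\mathscr O_E \subset \nu_\star \mathscr O_F$ with $\dim_k(\nu_\star \mathscr O_F / \mathscr O_E) = 1$ arise, and are they all isomorphic as ring germs?

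First I would observe that by duality with respect to the residue pairing, $\mathscr O_E$ is precisely the annihilator of $\omega_E$ inside $\nu_\star\mathscr O_F$ relative to $\nu_\star\omega_F$; concretely, a section $f \in \nu_\star \mathscr O_F$ with branch expansions $f(0) + b_i^{(f)} x_i + \cdots$ lies in $\mathscr O_E$ if and only if $\sum_i c_i b_i^{(f)} = 0$ (the $c' f(0)$ term having dropped out because $c'=0$). Next I would rescale each local parameter $x_i \mapsto c_i x_i$, which is an isomorphism of the seminormalization identifying the germ, to reduce to the case $c_1 = \cdots = c_m = 1$; then $\mathscr O_E$ is the subring of functions on the $m$ branches agreeing to order $0$ at $p$ and with $\sum_i b_i = 0$, i.e. whose total first-order derivative along the branches vanishes. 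This is manifestly unique given $m$, proving existence and uniqueness. For the identifications: when $m=1$ the condition $b_1 = 0$ gives the subring $k[x^2, x^3] \subset k[x]$, which is the cusp $\mathbf V(y^2 - x^3)$; when $m=2$ the condition $b_1 + b_2 = 0$ on $k[x_1]\times k[x_2]$ (with the functions already forced to agree at $p$) gives exactly the local ring of the tacnode $\mathbf V(y^2 - yx^2)$, which I would check by exhibiting generators $u = (x_1,0)+(0,x_2)$ no wait — rather, generators whose relation reproduces $y^2 = y x^2$; when $m \geq 3$ the $m$ branches are the $m$ coordinate-type lines and the single linear condition $\sum b_i = 0$ on their tangent directions places them in general position spanning an $\mathbf A^{m-1}$, so one recovers the union of $m$ general lines through the origin.

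The main obstacle I expect is not the uniqueness (which the duality argument handles cleanly) but the explicit model identifications, especially for $m = 2$: one must verify that the abstract ring $\{(f_1, f_2) \in k[[x_1]]\times k[[x_2]] : f_1(0) = f_2(0),\ f_1'(0) + f_2'(0) = 0\}$ is isomorphic to $k[[x,y]]/(y^2 - y x^2)$, which requires choosing coordinates carefully — note the tacnode $y^2 - yx^2 = y(y - x^2)$ has branches $y = 0$ and $y = x^2$, so the two branch parametrizations $t \mapsto (t,0)$ and $t \mapsto (t, t^2)$ have the \emph{same} linear term in the shared coordinate $x$, and it is the chosen embedding rather than an intrinsically symmetric condition that realizes the $\sum b_i = 0$ constraint; I would reconcile this by an explicit change of parameters on the branches. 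For $m \geq 3$ the phrase "general lines" must be interpreted as: any $m$ lines through the origin in $\mathbf A^{m-1}$ in linearly general position, and I would note the single residue relation is exactly what forces general position (equivalently, the embedding dimension is $m-1$, not $m$), citing the more conceptual argument of~\cite[Proposition~A.3]{Smyth} as an alternative. For $m = 0$ the statement is vacuous/degenerate: a genus~$1$ singularity with $0$ branches is the empty germ, and one may either exclude it or read it as the statement that a smooth proper genus~$1$ curve has no singularity — I would add a remark clarifying the convention.
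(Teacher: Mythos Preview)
Your proposal is correct and follows exactly the approach the paper indicates: the paper does not give a detailed proof but simply says the result follows ``by consideration of the residue condition imposed by the form~\eqref{eqn:omega-E}'' and refers to \cite[Proposition~A.3]{Smyth} for an alternative argument, which is precisely what you carry out and cite. Your rescaling $x_i \mapsto c_i x_i$ to normalize the condition to $\sum_i b_i = 0$, and your explicit identifications (including the caveat about reparametrizing a branch for $m=2$, which is resolved by $x_2 \mapsto -x_2$), are the natural way to flesh out the paper's one-line sketch.
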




\subsection{Tropical curves} We follow standard conventions and definitions for tropical curves and tropical stable maps.

\begin{definition}
An \textbf{$n$-marked tropical curve} $\plC$ is a finite graph $G$ with vertex and edge sets $V$ and $E$, enhanced by
\begin{enumerate}
\item a \textbf{marking function} $m: \{1,\ldots,n\}\to V$,
\item a \textbf{genus function} $g:V\to \NN$,
\item a \textbf{length function} $\ell: E\to \RR_{+}$.
\end{enumerate}
The \textbf{genus} of a tropical curve $\plC$ is defined to be 
\[
g(\plC) = h_1(G)+\sum_{v\in V} g(v)
\]
where $h_1(G)$ is the first Betti number of the geometric realization of $\plC$. An $n$-marked tropical curve is \textbf{stable} if (1) every genus $0$ vertex has valence at least $3$ and (2) every genus $1$ vertex has valence at least $1$. 
\end{definition}


More generally, one may permit the length function $\ell$ above to take values in an arbitrary toric monoid $P$. This presents us with a natural notion of a family of tropical curves.

\begin{definition}
Let $\sigma$ be a rational polyhedral cone with dual cone $S_\sigma$. A \textbf{family of $n$-marked prestable tropical curves over $\sigma$} is a tropical curve whose length function takes values in $S_\sigma$.
\end{definition}

We note that given a tropical curve over $\sigma$, each point of $\sigma$ determines a tropical curve in the usual sense. Indeed, choosing a point of $\sigma$ is equivalent to choosing a monoid homomorphism
\[
\varphi: S_\sigma \to \RR_{\geq 0}.
\]
Applying this homomorphism to the edge length $\ell(e)\in S_\sigma$ produces a real and positive length for each edge.

\subsection{Logarithmic geometry: working definitions} Let $N$ be a free abelian group of finite rank and $X^\circ$ be a subscheme of a torus $T = \mathbf G_m\otimes N$ over a field $k$ equipped with the trivial valuation. Let $K$ be a valued field extending $K$, with valuation surjective onto $\RR$. Then, the tropicalization of $X$ is the image of $X(K)$ under the coordinatewise valuation map
\[
T(K)\to \RR\otimes N.
\]
This set is denoted $X^{\trop}$, and can be given the structure of a fan. This fan distinguishes a partial compactification of $T$ to a toric variety $Y$. The embedding of the closure $ X\hookrightarrow Y$ determines, locally on $X$, a natural class of \textit{monomial} functions obtained by restricting the monomials on $T$. These monomials form a sheaf of monoids $M_X$ under multiplication, and a tautological map of monoids
\[
\mathcal O_X^\star\subset M_X\to \mathcal O_X.
\]
The quotient is another sheaf of monoids $\overline M_X:=M_X/\mathcal O_X^\star$, and amounts to considering monomial functions up to scalars. 

Sections of the groupification $\overline M_X^{\mathrm{gp}}$ can be interpreted as piecewise linear functions on $X^{\trop}$. Just as in the toric case, piecewise linear functions on $X^{\trop}$ give rise to line bundles on $X$. Specifically, given a piecewise linear function, the set of algebraic lifts of it in $\overline M^{\mathrm gp}_X$ form a torsor under the multiplicative group, and therefore a line bundle. This is explained more precisely in Section~\ref{sec: lb-from-pl} below.

A logarithmic scheme is an object that possesses the main features present above. The requirement that $X$ be embedded in a toric variety can be dropped. Instead, one need only assume that $X$ (locally) admits a morphism to a toric stack.  The data of the sheaf $\overline M_X$ may be thought of as the sheaf of piecewise linear functions on $X$.

To be more precise, it is convenient to reverse the logical order and specify the monomials first. Given a scheme $S$, a logarithmic structure is a sheaf of monoids $M_S$ in its \'etale topology and sharp homomorphism $\varepsilon : M_S \to \mathcal O_S$ (the codomain given its multiplicative monoid structure).  Sharpness means that each local section of $\mathcal O_S^\star$ has a unique preimage along $\varepsilon$.  The quotient $M_X / \varepsilon^{-1} \mathcal O_X^\star$ is called the \emph{characteristic monoid} and is denoted $\overnorm M_X$ with its operation denoted \emph{additively}; the image of section $\alpha$ of $M_X^{\rm gp}$ in $\overnorm M_X^{\rm gp}$ is denoted $\overnorm\alpha$.  We assume all logarithmic structures are integral ($M_X$ is contained in its associated group $M_X^{\rm gp}$) and saturated (if $\alpha \in M_X^{\rm gp}$ and $n\alpha \in M_X$ for some integer $n \geq 1$ then $\alpha \in M_X$).

Such objects may be assembled into a category. The category of logarithmic schemes has the analogous constructions and notions from scheme theory, keeping track of the tropical data through the sheaves of piecewise linear functions.

For more of the general theory of logarithmic structures, we refer the reader to Kato's original article~\cite{Kat89}. A detailed study of the relationship between tropical and logarithmic geometry from a categorical point of view is undertaken in~\cite{CCUW}.

\subsection{Curves {\it \&} logarithmic structures} Let $(S,M_S)$ be a logarithmic scheme. A \textbf{family of logarithmically smooth curves over $S$} is a logarithmically smooth, flat, and proper morphism
\[
\pi: (C,M_C) \to (S,M_S),
\]
with connected and reduced geometric fibers of dimension $1$. We recall F. Kato's structure theorem for logarithmic curves~\cite{Kat00}.

\begin{theorem}
Let $C\to S$ be a family of logarithmically smooth curves. If $x\in C$ is a geometric point, then there is an \'etale neighborhood of $C$ over $S$, with a strict morphism to an \'etale-local model $\pi:V\to S$, and $V\to S$ is one of the following:
\begin{itemize}
\item { (the smooth germ)} $V  = \mathbf A^1_S\to S$, and the logarithmic structure on $V$ is pulled back from the base;
\item { (the germ of a marked point)} $V = \mathbf A^1_S\to S$, with logarithmic structure pulled back from the toric logarithmic structure on $\mathbf A^1$;
\item { (the node)} $V = \mathscr O_S[x,y]/(xy = t)$, for $t\in \mathscr O_S$.  The logarithmic structure on $V$ is pulled back from the multiplication map $\mathbf A^2 \to \mathbf A^1$ of toric varieties along a morphism $t : S \to \mathbf A^1$ of logarithmic schemes.
\end{itemize}

The image of $t\in M_S$ in $\overnorm M_S$ is referred to as the \textbf{deformation parameter of the node.}


\end{theorem}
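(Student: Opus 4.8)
The plan is to localize at a geometric point $x \in C$ lying over a geometric point $s \in S$ and to apply the chart criterion for logarithmic smoothness~\cite{Kat89}. After an \'etale localization near $x$ and $s$, that criterion produces fine saturated charts $P \to M_S$ and $Q \to M_C$, which we arrange to be \emph{neat} at $s$ and $x$ (so that $P = \overnorm M_{S,s}$, $Q = \overnorm M_{C,x}$, and both are sharp), together with an injective homomorphism $P \to Q$ whose cokernel has prime-to-characteristic torsion, such that the induced morphism
\[
C \longrightarrow S \times_{\Spec \ZZ[P]} \Spec \ZZ[Q]
\]
is strict and smooth of some relative dimension $n$. The theorem is thereby reduced to classifying the homomorphisms $\overnorm M_{S,s} \to \overnorm M_{C,x}$ compatible with the hypotheses, and then identifying the fibre product on the right.

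The hypotheses on $\pi$ --- flatness, and geometric fibres that are reduced curves of dimension $1$ --- are what constrain the chart. Equidimensionality and flatness force $\rank Q^{\rm gp} - \rank P^{\rm gp} + n = 1$. If $\rank Q^{\rm gp} = \rank P^{\rm gp}$ then $n = 1$, and flatness of $\Spec\ZZ[Q] \to \Spec\ZZ[P]$ together with saturatedness of $P$ and $Q$ and the equality of their groupifications forces $Q = P$; then $C$ is smooth over $S$ of relative dimension $1$ with logarithmic structure pulled back from the base, which is the smooth germ. Otherwise $Q^{\rm gp}/P^{\rm gp}$ has rank one and $n = 0$, so \'etale-locally $C = S \times_{\Spec\ZZ[P]} \Spec\ZZ[Q]$; a short analysis of the sharp fine saturated monoids $Q \supseteq P$ with $Q^{\rm gp}/P^{\rm gp} \cong \ZZ$ for which $\Spec\ZZ[Q] \to \Spec\ZZ[P]$ is flat with reduced one-dimensional fibres shows that, up to localization and a change of coordinates, $Q$ is either $P \oplus \NN$, or the pushout $P \oplus_{\NN} \NN^2$ along $\NN \to \NN^2$, $1 \mapsto (1,1)$, and $\NN \to P$, $1 \mapsto \delta$, for a unique $\delta \in \overnorm M_{S,s}$. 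Here reducedness is decisive: a pushout along $1 \mapsto (a,b)$ with $(a,b) \neq (1,1)$ would produce the non-reduced toric fibre $\Spec k[x,y]/(x^a y^b)$, and both flatness and the dimension count rule out the remaining possibilities.

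It remains to recognize the fibre product in the two nontrivial cases. If $Q = P \oplus \NN$, then $S \times_{\Spec\ZZ[P]} \Spec\ZZ[Q] = \mathbf A^1_S$, with logarithmic structure the sum of $M_S$ and the toric logarithmic structure of $\mathbf A^1$ attached to the $\NN$-factor, and $C$ maps to it strict \'etale: the germ of a marked point. If $Q = P \oplus_{\NN} \NN^2$, choose a lift $t \in M_S$ of $\delta$; since $\Spec\ZZ[Q] = \Spec\ZZ[P] \times_{\mathbf A^1} \mathbf A^2$ for the multiplication map $\mathbf A^2 \to \mathbf A^1$, the fibre product becomes $S \times_{\mathbf A^1} \mathbf A^2 = \Spec \mathscr O_S[x,y]/(xy - t)$, with logarithmic structure pulled back from $\mathbf A^2 \to \mathbf A^1$ along $t$, and $C$ again maps to it strict \'etale: the node, with deformation parameter $\delta$. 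This $\delta$ does not depend on the chart, being the image in $\overnorm M_{S,s}$ of $e_x + e_y$, where $e_x, e_y \in \overnorm M_{C,x}$ are the two generators of the relative characteristic; so the local models are canonical and the deformation parameter is well defined.

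The main obstacle is the content of the second paragraph: extracting the monoid trichotomy from the scheme-theoretic input that the geometric fibres are flat, reduced, and one-dimensional. This requires knowing that the charts can be taken neat (so that $Q = \overnorm M_{C,x}$ exactly and the comparison morphism is strict, not merely smooth), controlling the auxiliary smooth factor $\mathbf A^n$, and carrying out the classification of rank-one monoid extensions $P \to Q$ under the flatness and reducedness constraints. Everything afterwards is routine bookkeeping with fibre products of toric schemes.
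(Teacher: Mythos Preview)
The paper does not prove this statement: it is quoted as F.~Kato's structure theorem for logarithmic curves, with a citation to~\cite{Kat00}, and no argument is supplied. So there is nothing in the paper to compare your sketch against beyond the attribution.

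That said, your outline is essentially the standard route to Kato's theorem. You invoke the chart criterion for logarithmic smoothness to obtain neat charts $P=\overnorm M_{S,s}\to Q=\overnorm M_{C,x}$ and a strict smooth map $C\to S\times_{\Spec\ZZ[P]}\Spec\ZZ[Q]$, then use the numerical constraint coming from relative dimension~$1$ to reduce to classifying injections $P\hookrightarrow Q$ of sharp fs monoids with $\rank(Q^{\rm gp}/P^{\rm gp})\leq 1$. The identification of the three cases with $\mathbf A^1_S$ (trivially enhanced), $\mathbf A^1_S$ (torically enhanced), and the nodal model pulled back from $\mathbf A^2\to\mathbf A^1$ is correct, and your observation that the deformation parameter is intrinsic to $\overnorm M_{C,x}$ is the right way to see it is well defined.

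One place where your sketch is a bit quick: in the rank-zero case you assert that flatness plus saturation plus $Q^{\rm gp}=P^{\rm gp}$ forces $Q=P$. Two saturated sharp monoids with the same groupification need not coincide, so you are implicitly using integrality of the monoid map (equivalent to flatness of the monoid algebra map, by Kato) together with sharpness to rule out a strictly larger $Q$; it would be worth making that step explicit. Similarly, in the rank-one case the exclusion of torsion in $Q^{\rm gp}/P^{\rm gp}$ (which would produce a nontrivial \'etale cover rather than a marked point or node) deserves a sentence. These are genuine steps in Kato's argument rather than formalities, so if you intend this as more than an outline you should fill them in.
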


Associated to a logarithmic curve $C\to S$ is a family of tropical curves.

\begin{definition}
Let $C\to S$ be a family of logarithmically smooth curves and assume that the underlying scheme of $S$ is the spectrum of an algebraically closed field. Then, \textbf{the tropicalization $C$}, denoted $\plC$, is obtained as follows: (1) the underlying graph is the marked dual graph of $C$ equipped with the standard genus and marking functions, and (2) given an edge $e$, the generalized length $\ell(e) = \delta_e\in \overnorm M_S$ is the deformation parameter of the corresponding node of $C$.
\end{definition}


For more about logarithmic curves and their relationship to tropical curves, the reader may consult~\cite{CCUW}. 

\subsection{Geometric interpretation of the sections of a logarithmic structure}
\label{sec:char-sect} 
Given a logarithmic curve $C\to S$, it will be helpful to interpret sections of the sheaves $M_C^{\rm gp}$, and $\overnorm M_C^{\rm gp}$ geometrically. 

\numberwithin{theorem}{subsubsection}
\subsubsection{The affine and projective lines} 
\label{sec:monoid-sections}

Let $(X,\varepsilon: M_X\to \mathscr O_X)$ be a logarithmic scheme.  A section of $M_X$ corresponds to a map $X \to \mathbf A^1$, the target given its toric logarithmic structure. Let $\alpha$ be such a section and $\overnorm\alpha$ be its image in $\overnorm M_X$.  Then $\varepsilon(\alpha)$ is a unit if and only if $\overnorm\alpha = 0$.

With its logarithmic structure, $\PP^1$ can be constructed as the quotient of $\mathbf A^2 - \{ 0 \}$ by $\Gm$.  Any map $X \to \PP^1$ lifts locally to $\mathbf A^2 - \{ 0 \}$ and can therefore be represented by a pair of sections $(\xi,\eta)$ of $M_X$.  The ratio $\xi^{-1} \eta$, which is a section of $M_X^{\rm gp}$, is invariant under the action of $\Gm$, since $\Gm$ acts with the same weight on $\xi$ and $\eta$.

Therefore a map $X \to \PP^1$ gives a well-defined section $\alpha$ of $M_X^{\rm gp}$.  Not every section of $M_X^{\rm gp}$ arises this way, because the map $(\xi, \eta) : X \to \mathbf A^2$ from which $\alpha$ was derived could not meet the origin.  This condition implies that, for each geometric point $x$ of $X$, either $\overnorm\xi_x = 0$ or $\overnorm\eta_x = 0$.  In terms of $\overnorm\alpha$, this means that $\overnorm\alpha_x \geq 0$ or $\overnorm\alpha_x \leq 0$.  We term this property being \textbf{locally comparable to $0$}.

{Our observations prove the following proposition:}

\begin{proposition}
Let $X$ be a logarithmic scheme.  Maps $X \to \PP^1$, the latter given its toric logarithmic structure, may be identified with sections $\alpha$ of $M_X^{\rm gp}$, whose images $\overnorm\alpha$ in $\overnorm M_X^{\rm gp}$ are locally comparable to $0$.
\end{proposition}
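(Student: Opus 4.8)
The plan is to exhibit the correspondence in both directions and check that the two assignments are mutually inverse, working locally on $X$. In the forward direction almost everything has already been assembled in the discussion above: a map $X \to \PP^1 = (\mathbf A^2 - \{0\})/\Gm$ amounts to a $\Gm$-torsor on $X$ together with an equivariant map to $\mathbf A^2 - \{0\}$, and trivializing the torsor locally presents this as a pair $(\xi,\eta)$ of maps $X \to \mathbf A^1$, i.e.\ a pair of sections of $M_X$, with $(\varepsilon(\xi),\varepsilon(\eta))$ avoiding the origin of $\mathbf A^2$. The ratio $\alpha = \xi^{-1}\eta$ is a section of $M_X^{\rm gp}$, is $\Gm$-invariant, and is unchanged when $(\xi,\eta)$ is replaced by $(u\xi,u\eta)$ for a unit $u\in\mathcal O_X^\star$; hence the local ratios glue to a global section $\alpha$ of $M_X^{\rm gp}$ depending only on the original map. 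At a geometric point $x$, the condition that $(\xi,\eta)$ avoid the origin forces $\overnorm\xi_x = 0$ or $\overnorm\eta_x = 0$, i.e.\ $\overnorm\alpha_x \leq 0$ or $\overnorm\alpha_x \geq 0$, so $\overnorm\alpha$ is locally comparable to $0$.

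For the reverse direction I would reconstruct the map from such an $\alpha$. Put $U_+ = \{x : \overnorm\alpha_x \geq 0\}$ and $U_- = \{x : \overnorm\alpha_x \leq 0\}$. Since $M_X \hookrightarrow M_X^{\rm gp}$, the locus $U_+$ is precisely where the germ of $\alpha$ lies in $M_X$, and this is open (if $\alpha_x \in M_{X,x}$, a local section of $M_X$ with that germ must coincide with $\alpha$ near $x$); likewise $U_-$ is open, and "locally comparable to $0$" says $U_+ \cup U_- = X$. Over $U_+$ the pair $(\xi,\eta) := (1,\alpha)$ avoids the origin because $\xi = 1$ is a unit, hence defines $U_+ \to \PP^1$; symmetrically $(\alpha^{-1},1)$ defines $U_- \to \PP^1$. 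On $U_+ \cap U_-$ one has $\overnorm\alpha_x \geq 0$ and $\overnorm\alpha_x \leq 0$, and since $\overnorm M_X$ is sharp this forces $\overnorm\alpha = 0$; thus $\varepsilon(\alpha) \in \mathcal O_X^\star$, its unique preimage in $M_X$ is $\alpha^{-1}$, and the $\Gm$-action of this unit carries $(1,\alpha)$ to $(\alpha^{-1},1)$. So the two maps agree on the overlap and glue to a map $X \to \PP^1$.

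Finally I would verify the two assignments are inverse. Starting from a map, passing to $\alpha = \xi^{-1}\eta$, and reconstructing: over the open locus where the map lands in the chart $\{\xi \neq 0\}$ the reconstructed pair $(1,\xi^{-1}\eta)$ differs from $(\xi,\eta)$ by the unit $\varepsilon(\xi)$, and similarly over $\{\eta \neq 0\}$; these two charts cover $X$ because $(\xi,\eta)$ avoids the origin, so the reconstructed map is the original one. In the other order, starting from $\alpha$, the reconstructed pair is $(1,\alpha)$ on $U_+$ and $(\alpha^{-1},1)$ on $U_-$, whose ratios both equal $\alpha$; this recovers $\alpha$ and simultaneously shows the bijection is natural in $X$.

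The genuinely delicate points all live in the reverse direction: that $U_+$ and $U_-$ are open and cover $X$, and that the two chart-wise maps agree on $U_+ \cap U_-$. These rest on the interplay between the inclusion $M_X \hookrightarrow M_X^{\rm gp}$, sharpness of $\varepsilon$, and sharpness of $\overnorm M_X$ — so that "$\overnorm\alpha$ comparable to $0$ from both sides at a point" forces $\varepsilon(\alpha)$ to be invertible there. Everything else is formal manipulation with the $\Gm$-quotient presentation of $\PP^1$ and the identification of sections of $M_X$ with maps to $\mathbf A^1$.
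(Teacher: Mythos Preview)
Your argument is correct and follows exactly the approach of the paper, which presents $\PP^1$ as $(\mathbf A^2 - \{0\})/\Gm$ and identifies maps with $\Gm$-invariant ratios of sections of $M_X$. The paper's own proof is in fact just the discussion preceding the proposition (it says ``our observations prove the following proposition''), which spells out only the forward direction; you have written out the reverse direction and the mutual inverse check explicitly, which the paper leaves to the reader. One small phrasing issue: where you write ``its unique preimage in $M_X$ is $\alpha^{-1}$'', you presumably mean that $\alpha$ itself lies in $\mathcal O_X^\star \subset M_X$ (sharpness gives $\alpha$, not $\alpha^{-1}$, as the preimage of $\varepsilon(\alpha)$), so that $\alpha^{-1}$ is available as a unit to transport $(1,\alpha)$ to $(\alpha^{-1},1)$; the conclusion is unaffected.
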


Because it has charts, the sheaf $\overnorm M_X^{\rm gp}$ locally admits a surjection from a constant sheaf, so the condition on $\overnorm\alpha$ in the proposition is open on the base:  if $X$ is a family of logarithmic schemes over $S$ and a section $\alpha$ of $\overnorm M_X^{\rm gp}$ verifies $\overnorm\alpha \geq 0$ or $\overnorm\alpha \leq 0$ for all $x$ in a geometric fiber $X_s$ of $X$ over $S$ then it also verifies that condition for all $t$ in some open neighborhood of $s$.

This observation is particularly useful for studying infinitesimal deformations of logarithmic maps to $\PP^1$, as it is equivalent to deform the section $\alpha$ of $M_X^{\rm gp}$.

\begin{definition}
For any logarithmic scheme $X$, we define $\mathbf{G}_{\log}(X) = \Gamma(X, M_X^{\rm gp})$.  Identifying $X$ with its functor of points, we also write $\mathbf{G}_{\log}(X) = \Hom(X, \mathbf{G}_{\log})$.
\end{definition}

\begin{remark}
The functor $\mathbf{G}_{\log}$ is not representable by a logarithmic scheme; it is analogous to an algebraic space (see \cite{logpic}, for example).  The above considerations may be seen as a demonstration that $\PP^1$ is logarithmically \'etale over $\mathbf{G}_{\log}$. {A discussion of the simpler spaces rational curves in $\mathbf{G}_{\log}$ may be found in~\cite{RW19}.}

We prefer to avoid a discussion of the geometric structure of $\mathbf{G}_{\log}$ in this paper.  The reader should feel free to regard maps to $\mathbf{G}_{\log}$ as a convenient shorthand for sections of $M_X^{\rm gp}$ and nothing more.  {The advantage of treating $\mathbf G_{\log}$ as a geometric object, and not merely an abstract sheaf, and working with that object instead of geometric models like $\mathbf P^1$, is that the latter approach necessitates an apparently endless process of subdivision and refinement that obscures the geometric essence of our arguments.} 
\end{remark}

\subsubsection{Maps to toric varieties}
\label{sec:to-toric}

The observations above concerining logarithmic maps to $\PP^1$ may be extended to all toric varieties.  Indeed, if $Z = \Spec k[S_\sigma]$ is an affine toric variety defined by a cone $\sigma$ and character lattice $N^\vee$, then there is a canonical map
\begin{equation*}
S_\sigma \to \Gamma(Z, M_Z) ,
\end{equation*}
which extends to a map
\begin{equation*}
N^\vee \to \Gamma(Z, M^{\rm gp}_Z) .
\end{equation*}
The construction of this map commutes with restriction to open torus invariant subvarieties, and therefore glues to a well-defined map on any toric variety.

\begin{proposition}
Let $X$ be a logarithmic scheme and let $Z$ be a toric variety with fan $\Sigma$ and character lattice $N^\vee$.  Morphisms $X \to Z$ may be identified with morphisms $N^\vee \to \Gamma(X, M_X^{\rm gp})$ such that, for each geometric point $x$ of $X$, there is a cone $\sigma \in \Sigma$, such that the map
\begin{equation*}
S_\sigma \to \Gamma(X, M_X^{\rm gp}) \to \Gamma(X, \overnorm M_X^{\rm gp}) \to \overnorm M_{X,x}^{\rm gp}
\end{equation*}
factors through $\overnorm M_{X,x}$.
\end{proposition}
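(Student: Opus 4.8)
The plan is to cover $Z$ by its affine toric charts $U_\sigma = \Spec k[S_\sigma]$ for $\sigma \in \Sigma$, to recognize morphisms into such a chart via the universal property of the toric logarithmic structure, and then to glue; the only non-formal ingredient will be that the stated pointwise condition is open on $X$, exactly as for the condition ``locally comparable to $0$'' appearing above for $\PP^1$.

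\textbf{The affine case and well-definedness.} By the universal property of the toric logarithmic structure, a morphism $X \to U_\sigma$ is the same as a monoid homomorphism $S_\sigma \to \Gamma(X, M_X)$. Since $S_\sigma$ generates $N^\vee$ as a group and $M_X$ is integral, so that as sheaves $M_X = M_X^{\rm gp} \times_{\overnorm M_X^{\rm gp}} \overnorm M_X$, such homomorphisms are precisely the group homomorphisms $N^\vee \to \Gamma(X, M_X^{\rm gp})$ carrying $S_\sigma$ into $\Gamma(X, M_X)$. Now let $f : X \to Z$ be arbitrary and $x$ a geometric point; choosing $\sigma \in \Sigma$ with $f(x) \in U_\sigma$, the associated homomorphism $\varphi_f : N^\vee \to \Gamma(X, M_X^{\rm gp})$ sends $S_\sigma$ into $\Gamma(f^{-1}U_\sigma, M_X)$, so the image of $S_\sigma$ in $\overnorm M_{X,x}^{\rm gp}$ lies in $\overnorm M_{X,x}$. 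This shows $\varphi_f$ lies in the set on the right-hand side, defining the comparison map.

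\textbf{Injectivity.} Suppose $f, g : X \to Z$ induce the same $\varphi$, and let $x$ be a geometric point. For any $\sigma$ with $f(x) \in U_\sigma$, the point $f(x)$ of $U_\sigma$ is determined up to the torus action by its support $\{ s \in S_\sigma : \overnorm{\varphi(s)}_x = 0 \}$, and hence $f(x)$ lies in a $T$-orbit $O_{\tau(x)}$ depending only on $\varphi$ and $x$; the same holds for $g(x)$, so $f(x), g(x) \in O_{\tau(x)} \subseteq U_{\tau(x)}$. On the open neighbourhood $f^{-1}(U_{\tau(x)}) \cap g^{-1}(U_{\tau(x)})$ of $x$, both $f$ and $g$ are recovered from $\varphi|_{S_{\tau(x)}}$ by the affine case, hence agree near $x$; since $x$ was arbitrary, $f = g$.

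\textbf{Surjectivity, and the main obstacle.} Given $\varphi$ in the right-hand side and a geometric point $x$, pick $\sigma = \sigma_x \in \Sigma$ for which $S_{\sigma_x} \to \overnorm M_{X,x}^{\rm gp}$ factors through $\overnorm M_{X,x}$. The essential point is that this factoring persists in an étale neighbourhood of $x$: choosing a chart $P \to M_X$ near $x$ and shrinking so that the images in $\Gamma(U_0, \overnorm M_X^{\rm gp})$ of the finitely many generators of $S_{\sigma_x}$ lift to elements $q_i \in P^{\rm gp}$, the factoring at a point $y$ becomes the condition $q_i \in P + F_y^{\rm gp}$, where $F_y = \{ p \in P : \varepsilon(p)(y) \neq 0 \}$ is the face of $P$ of sections becoming units at $y$; since each locus $\{ \varepsilon(p) \neq 0 \}$ is open, $U := \{ y \in U_0 : F_y \supseteq F_x \}$ is an open neighbourhood of $x$, and on it the condition holds because $q_i \in P + F_x^{\rm gp} \subseteq P + F_y^{\rm gp}$. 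Over $U$ the homomorphism $\varphi|_{S_{\sigma_x}}$ then lands in $\Gamma(U, M_X)$ and produces a morphism $f_x : U \to U_{\sigma_x} \hookrightarrow Z$ with $\varphi_{f_x} = \varphi|_U$; by the injectivity already established these local morphisms agree on overlaps and glue to the desired $f : X \to Z$. The main obstacle is precisely this openness --- that a coherent choice of cone at each geometric point assembles into a genuine morphism --- and it is handled, exactly as for the ``locally comparable to $0$'' condition used for $\PP^1$, by the chart structure of $\overnorm M_X$.
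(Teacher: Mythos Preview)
Your proof is correct and follows exactly the approach the paper indicates: the paper does not give a separate proof of this proposition but treats it as the evident extension of the $\PP^1$ case discussed just before, where the affine charts $U_\sigma$ play the role of $\mathbf A^1$ and the pointwise cone condition replaces ``locally comparable to $0$''. You have simply spelled out the details --- the universal property of $\Spec k[S_\sigma]$, the openness of the pointwise condition via charts for $\overnorm M_X$, and the gluing via injectivity --- that the paper leaves implicit.
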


\begin{definition}
Let $N$ be a finitely generated free abelian group.  We write $(N \otimes \mathbf{G}_{\log})(X) = \Hom(N^\vee, \Gamma(X, M_X^{\rm gp}))$ and use $\Hom(X, N \otimes \mathbf{G}_{\log})$ for the same notion.
\end{definition}

\begin{remark}
The discussion above shows that, if $Z$ is a toric variety with cocharacter lattice $N$, then there is a canonical logarithmic modification $Z \to N \otimes \mathbf{G}_{\log}$.
\end{remark}

\subsubsection{Sections of the characteristic monoid}
\label{sec:char-mon}

Since logarithmic maps $X \to \mathbf A^1$ correspond to sections of $M_X$, maps $X \to [ \mathbf A^1 / \Gm ]$ correspond to sections of $M_X / \mathscr O_X^\star = \overnorm M_X$.  The quotient $[ \mathbf A^1 / \Gm ]$ is usually denoted $\mathscr A$ and is called the \textbf{Artin fan} of $\mathbf A^1$.

It is shown in \cite[Remark~7.3]{CCUW} that, if $X$ is a logarithmic curve over $S$, and the underlying scheme of $S$ is the spectrum of an algebraically closed field, then sections of $\overnorm M_X$ (which is to say, maps $X \to \mathscr A$) may be interpreted as piecewise linear functions on the tropicalization of $X$ that are valued in $\overnorm M_S$ and are linear along the edges with integer slopes.

Similar reasoning, combined with the discussion in Section~\ref{sec:monoid-sections} shows that maps $X \to [ \mathbf P^1 / \Gm ]$ correspond to sections $\alpha$ of $\overnorm M_X^{\rm gp}$ that are locally comparable to $0$.  If $X$ is a curve, then these sections are the piecewise linear functions on the tropicalization that are everywhere valued in $\overnorm M_S$ or in $- \overnorm M_S$.


\begin{remark}
Even though its underlying ``space'' is an algebraic stack, $[\mathbf A^1 / \Gm]$ represents a \textbf{functor} on logarithmic schemes. { This contrasts with the more common situation, where algebraic stacks typically only represent categories fibered in groupoids over schemes.}
\end{remark}

\numberwithin{theorem}{subsection}
\subsection{Line bundles from piecewise linear functions}\label{sec: lb-from-pl} For any logarithmic scheme $X$, there is a short exact sequence
\begin{equation*}
0 \to \mathscr O_X^\star \to M_X^{\rm gp} \to \overnorm M_X^{\rm gp} \to 0
\end{equation*}
of the sheaves associated to the logarithmic structure. Given a section $\alpha \in \Gamma(X, \overnorm M_X^{\rm gp})$, the image of $\alpha$ under the coboundary map
\begin{equation*}
H^0(X, \overnorm M_X^{\rm gp}) \to H^1(X, \mathscr O_X^\star)
\end{equation*}
is represented by an $\mathscr O_X^\star$-torsor $\mathscr O_X^\star(-\alpha)$ on $X$ and gives rise to an associated line bundle.  Thus, to each piecewise linear function $f$ on $\plC$ that is linear on the edges with integer slopes and takes values in $\overnorm M_S$, we have an associated line bundle $\mathscr O(-f)$.

The explicit line bundle obtained by this construction is recorded in~\cite[Section 2]{RSW17A}.

\numberwithin{theorem}{subsection}
\subsection{Tropicalization of morphisms to toric varieties}
\label{sec:tropicalization}

Let $Z$ be a toric variety with dense torus $T$, equipped with its standard logarithmic structure, and let $N$ and $N^\vee$ be the cocharacter and character lattices of $Z$.


Let $C$ be a logarithmic curve over $S$, and assume that the underlying scheme of $S$ is the spectrum of an algebraically closed field.  A logarithmic map $\varphi : C \to Z$ induces a map 
\begin{equation} \label{eqn:map-to-tropGm}
N^\vee \to \Gamma(Z, \overnorm M_Z) \to \Gamma(C, \overnorm M_C) 
\end{equation}
by the discussion in Section~\ref{sec:to-toric}.

As remarked in Section~\ref{sec:char-sect}, the sections of $\overnorm M_C$ are piecewise linear functions on the tropicalization $\plC$ of $C$ that are linear with integer slopes along the edges and are valued in $\overnorm M_S^{\rm gp}$.  If we assume in addition that $\overnorm M_S = \RR_{\geq 0}$ then we obtain a piecewise linear map
\begin{equation*}
\plC \to \Hom(N^\vee,\RR) = N_\RR
\end{equation*}
that we call the \emph{tropicalization} of $C \to Z$. It will sometimes be convenient to think of this as a map from $\plC \to \Sigma$, where $\Sigma$ is the fan of $Z$.

\begin{lemma}
The map $\plC \to N_\RR$, constructed above, satisfies the balancing condition.
\end{lemma}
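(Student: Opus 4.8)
The balancing condition is a statement local to each vertex $v$ of $\plC$, so the plan is to fix such a vertex, look at the corresponding component $C_v$ of $C$, and extract balancing from the fact that a genuine line bundle on $C_v$ has degree equal to the sum of its local orders of vanishing. First I would translate the slope data: for each edge $e$ or leg at $v$, the piecewise linear function $F = \varphi^{\trop}$ composed with a character $u \in N^\vee$ gives an element $\overnorm\alpha_u$ of $\Gamma(C,\overnorm M_C^{\rm gp})$, and its slope along $e$ (pointing away from $v$) is an integer $u(m_e)$, where $m_e \in N$ is the direction vector attached to $e$. What must be shown is $\sum_e m_e = 0$ in $N$, or equivalently $\sum_e u(m_e) = 0$ for every $u$, so it suffices to treat one character at a time; this reduces the statement to the case $Z = \PP^1$ (or $\Gm$), i.e.\ to a section $\alpha$ of $M_C^{\rm gp}$.

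Next I would invoke the line-bundle construction of Section~\ref{sec: lb-from-pl}: the section $\overnorm\alpha_u \in \Gamma(C,\overnorm M_C^{\rm gp})$ has an associated line bundle $\mathscr O_C(-\overnorm\alpha_u)$, whose restriction to the component $C_v$ is a genuine line bundle of some degree $d$. On the one hand, since $S = \Spec k$ with $\overnorm M_S = \RR_{\geq 0}$ and the underlying curve $C$ is proper, the line bundle $\mathscr O_C(-\overnorm\alpha_u)$ is determined by a global piecewise linear function and its total degree on the proper curve $C$ is visible combinatorially; on the other hand, restricting to $C_v$, the degree of this line bundle on the single component equals the signed count of nodes and marked points where $\overnorm\alpha_u$ has nonzero slope, each weighted by that slope, because the transition functions of $\mathscr O_C(-\overnorm\alpha_u)$ across a node with deformation parameter $\delta_e$ are exactly governed by the slope jump. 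Concretely, $\deg_{C_v}\mathscr O_C(-\overnorm\alpha_u) = -\sum_{e \ni v} (\text{slope of }\overnorm\alpha_u\text{ along }e)$, where the sum runs over all edges and legs at $v$. Since a logarithmic map to $\PP^1$ over a point with smooth generic behaviour forces this line bundle to have degree zero on $C_v$ — this is where I would use that $C_v$ is a fixed smooth proper curve and $\alpha$ lifts (on the generic fiber, or after a suitable base change) to an honest map to $\Gm$, so the associated line bundle is trivial — we conclude $\sum_{e\ni v} u(m_e) = 0$.

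The main obstacle is making the middle step — "degree of $\mathscr O_C(-\overnorm\alpha_u)$ on $C_v$ equals minus the sum of the outgoing slopes" — precise and correctly signed, rather than the reduction, which is routine. The cleanest route is to use the explicit description of the line bundle recorded in~\cite[Section~2]{RSW17A} (cited in Section~\ref{sec: lb-from-pl}): there one reads off the transition data of $\mathscr O(-f)$ at each node directly in terms of the slopes of $f$, and summing the local contributions around the vertex $v$ (the nodes contribute $+$slope from one side and the marked legs contribute boundary vanishing orders) gives the degree on $C_v$ as the negative of the total outgoing slope. One then needs the vanishing of this degree, which follows because $\varphi|_{C_v}$ is (after passing to the generic point or an appropriate toric blow-up, as the paper repeatedly does) an actual morphism $C_v \to \Gm$, whose pullback of the trivial bundle $\mathscr O_{\Gm}$ is trivial; alternatively, one can argue purely combinatorially that the global PL function on the proper tropical curve $\plC$ has zero total slope at every vertex because $F$ is assumed balanced at infinity and PL, but the honest argument is the line-bundle one. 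Assembling these over all $u \in N^\vee$ yields $\sum_{e\ni v} m_e = 0$, which is the balancing condition.
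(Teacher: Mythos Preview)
The paper does not actually prove this lemma; it simply cites \cite[Section~1.4]{GS13}.  Your framework --- reduce to a single character $u \in N^\vee$, form the associated line bundle $\mathscr O_C(-\overnorm\alpha_u)$, and identify its degree on each component $C_v$ with (minus) the sum of outgoing slopes --- is exactly the standard argument, and is essentially what Gross--Siebert do.

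However, your justification for $\deg_{C_v} \mathscr O_C(-\overnorm\alpha_u) = 0$ is confused, and this is the one step with content.  You appeal to a ``generic fiber'', a ``suitable base change'', and the claim that ``$\alpha$ lifts to an honest map to $\Gm$''.  None of this is available or needed: $S$ is the spectrum of an algebraically closed field, so there is no generic fiber to pass to; and the section $\alpha_u \in \Gamma(C, M_C^{\rm gp})$ does not in general come from a map $C \to \Gm$ (that would force $\overnorm\alpha_u = 0$ identically, which is exactly what fails at nodes and markings).  The correct argument is immediate from the definition in Section~\ref{sec: lb-from-pl}: the torsor $\mathscr O_C^\star(-\overnorm\alpha_u)$ is, by construction, the sheaf of lifts of $\overnorm\alpha_u$ from $\overnorm M_C^{\rm gp}$ to $M_C^{\rm gp}$.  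But the logarithmic map $C \to Z$ already supplies a global lift, namely $\alpha_u$ itself, via the homomorphism $N^\vee \to \Gamma(C, M_C^{\rm gp})$ of Section~\ref{sec:to-toric}.  Hence the torsor is globally trivial on $C$, so in particular trivial on $C_v$, and its degree there vanishes.  Once you replace your paragraph about generic fibers with this observation, the proof is complete and correct.
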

\begin{proof}
This is proved in~\cite[Section 1.4]{GS13}.
%
%
\end{proof}

\subsection{Minimality}
\label{sec:minimality}

\numberwithin{theorem}{subsubsection}
\subsubsection{Minimal logarithmic structures}
\label{sec:minlog}

 A crucial concept in the theory of logarithmic moduli problems is that of \textbf{minimality}. Let $\mathbf{LogSch}$ denote the category of fine and saturated logarithmic schemes. Given a moduli stack $\fM$ over $\mathbf{LogSch}$ and a logarithmic scheme $S$, the fiber $\fM(S)$ of the fibered category $\fM$ over $S$ is the groupoid logarithmic geometric objects $[\mathscr X\to S]$ defined over $S$, as specified by the moduli problem. 

Logarithmic geometric objects are algebraic schemes or stacks with the additional structure of a sheaf of monoids. The description of $\fM$ as a category fibered in groupoids over $\mathbf{LogSch}$ does not furnish such an object: if $\underline S$ is a scheme without a chosen logarithmic structure, it does not make mathematical sense to consider the fiber of $\fM$ over $\underline S$. Said differently, there is no ``underlying scheme, or underlying stack, or underlying category fibered in groupoids over schemes'' of $\fM$.

The difficulty that must be overcome is that given an ordinary scheme $\underline S$, there are many choices for logarithmic schemes $(S,M_S)$ enhancing $\underline S$, and it is unclear which one to pick.  The notion of minimality, introduced by F. Kato and recently clarified and expanded~\cite{AC11,Che10,Gi12,GS13,Wis16a,Wis16b} identifies the correct logarithmic structures to allow on $\underline S$ as those satisfying a universal property, recalled below.

Assuming that $\fM$ does have an underlying scheme, we arrive at a \emph{necessary} condition for $\fM$ to be representable by a logarithmic scheme.  Suppose that $S \to \fM$ is a morphism of logarithmic schemes then the logarithmic structure of $\fM$ pulls back to a logarithmic structure $M$ on the underlying scheme $\underline S$ of $S$.  Moreover there is a factorization
\begin{equation*} \xymatrix@R=10pt{
	S \ar[dr] \ar[dd] \\
	& \fM \\
	(\underline S, M) \ar[ur]
} \end{equation*}
that is \emph{final} among all such factorizations.  This finality condition can be phrased entirely in terms of the moduli problem defining $\fM$, and Gillam shows that if minimal factorizations exist for all $S \to \fM$, and are preserved by base change, then $\fM$ comes from a logarithmic structure on a moduli problem over \emph{schemes}~\cite{Gi12,Wis16a}.  

\begin{theorem}[Gillam] \label{thm:gillam}
When $\mathfrak M$ is a category fibered in groupoids over logarithmic schemes that comes from a logarithmic structure on a category fibered in groupoids $\mathfrak N$ over schemes, $\mathfrak N$ can be recovered from $\mathfrak M$ as the subcategory of minimal objects.
\end{theorem}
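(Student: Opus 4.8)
The plan is to unwind what it means for $\mathfrak M$ to ``come from a logarithmic structure on $\mathfrak N$'' and then to check directly that the minimal objects of $\mathfrak M$ reassemble $\mathfrak N$ together with the logarithmic structure it carries. Following~\cite{Gi12,Wis16a}, a logarithmic structure on a category fibered in groupoids $\mathfrak N \to \mathbf{Sch}$ is a rule $L$ attaching to each scheme $\underline S$ and each object $\xi \in \mathfrak N(\underline S)$ a logarithmic structure $M_\xi$ on $\underline S$, compatibly with pullback --- equivalently, a morphism $L : \mathfrak N \to \mathbf{Log}$ to the stack of fine saturated logarithmic structures. The category fibered in groupoids $\mathfrak M \to \mathbf{LogSch}$ that it produces has, over a logarithmic scheme $S = (\underline S, M_S)$, the objects $(\xi,\varphi)$ with $\xi \in \mathfrak N(\underline S)$ and $\varphi : M_\xi \to M_S$ a morphism of logarithmic structures on $\underline S$, and a morphism $(\xi,\varphi)/S \to (\xi',\varphi')/S'$ lying over $g : S \to S'$ is an isomorphism $\psi : \xi \xrightarrow{\sim} \underline g^{\ast}\xi'$ in $\mathfrak N(\underline S)$ whose induced isomorphism $L(\psi) : M_\xi \xrightarrow{\sim} \underline g^{\ast} M_{\xi'}$ is compatible with $\varphi$, $\varphi'$, and the logarithmic part $g^{\flat}$ of $g$. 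In these terms an object $(\xi,\varphi)$ is minimal precisely when $\varphi$ is an isomorphism, i.e.\ when $M_S$ is the minimal logarithmic structure $M_\xi$ determined by $\xi$.

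Granting this description, the first step is to see that the full subcategory $\mathfrak M^{\mathrm{min}} \subset \mathfrak M$ of minimal objects is canonically a category fibered in groupoids over $\mathbf{Sch}$, not merely over $\mathbf{LogSch}$. For a morphism of schemes $\underline g : \underline{S'} \to \underline{S}$ and a minimal object $(\xi, \mathrm{id})$ over $(\underline S, M_\xi)$, the natural pullback is $(\underline g^{\ast}\xi,\mathrm{id})$ over $(\underline{S'}, \underline g^{\ast}M_\xi)$, and this is again minimal because $\underline g^{\ast}M_\xi = M_{\underline g^{\ast}\xi}$ by the pullback-compatibility of $L$; this is exactly the ``preserved by base change'' clause that is part of the hypothesis. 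Hence every scheme morphism has a Cartesian lift among minimal objects, and composing the projection $\mathfrak M^{\mathrm{min}} \to \mathbf{LogSch}$ with the forgetful functor $\mathbf{LogSch} \to \mathbf{Sch}$ exhibits $\mathfrak M^{\mathrm{min}}$ as a category fibered in groupoids over $\mathbf{Sch}$.

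The second step is to show that the forgetful functor $q : \mathfrak M^{\mathrm{min}} \to \mathfrak N$, $(\xi,\varphi) \mapsto \xi$, is an equivalence over $\mathbf{Sch}$. Essential surjectivity is immediate: $\xi \in \mathfrak N(\underline S)$ is $q$ of the minimal object $(\xi, \mathrm{id}_{M_\xi})$ over $(\underline S, M_\xi)$. For full faithfulness, fix minimal objects $(\xi,\mathrm{id})$ over $(\underline S, M_\xi)$ and $(\xi',\mathrm{id})$ over $(\underline{S'}, M_{\xi'})$ and a scheme morphism $\underline g : \underline S \to \underline{S'}$. A morphism of $\mathfrak M^{\mathrm{min}}$ over $\underline g$ amounts to a morphism of logarithmic schemes $g$ with underlying map $\underline g$, hence a logarithmic part $g^{\flat} : \underline g^{\ast}M_{\xi'} \to M_\xi$, together with an isomorphism $\psi : \xi \xrightarrow{\sim} \underline g^{\ast}\xi'$ satisfying the compatibility above; but when $\varphi$ and $\varphi'$ are identities that compatibility forces $g^{\flat} = L(\psi)^{-1}$, and conversely every isomorphism $\psi$ produces a genuine morphism of logarithmic schemes $g = (\underline g, L(\psi)^{-1})$ because $L(\psi)$ is an isomorphism of logarithmic structures. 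Thus the morphisms of $\mathfrak M^{\mathrm{min}}$ over $\underline g$ are in natural bijection with the isomorphisms $\xi \xrightarrow{\sim} \underline g^{\ast}\xi'$ in $\mathfrak N(\underline S)$, which are exactly the morphisms of $\mathfrak N$ over $\underline g$; since $\mathfrak N$ is fibered in groupoids this says $q$ is fully faithful, hence an equivalence. One recovers in addition the logarithmic structure $L$ itself as the rule sending $\xi$ to the base logarithmic structure of its minimal lift.

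The only genuinely load-bearing point is the assertion in the second step that the coherence constraint defining morphisms of $\mathfrak M$ pins down $g^{\flat}$ uniquely in terms of $\psi$ once the structure maps $\varphi,\varphi'$ are isomorphisms; everything else is formal manipulation of fibered categories, and the ``preserved under base change'' hypothesis supplies exactly what is needed for minimality to be stable enough to define a sub-fibered-category over $\mathbf{Sch}$. I would accordingly expect the real care in a complete write-up to lie in making the definition of $\mathfrak M$ --- equivalently, of a logarithmic structure on a category fibered in groupoids --- precise enough that this uniqueness is transparent.
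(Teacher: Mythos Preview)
The paper does not prove this theorem; it is stated with attribution to Gillam and cited to \cite{Gi12,Wis16a} without any argument in the text. Your proposal therefore cannot be compared against a proof in the paper, because there is none --- the result is quoted as background.

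That said, your argument is the standard one and is essentially correct. You have correctly identified that ``$\mathfrak M$ comes from a logarithmic structure on $\mathfrak N$'' means $\mathfrak M$ is the fiber product $\mathfrak N \times_{\mathbf{Log}} \mathbf{LogSch}$ along a morphism $L : \mathfrak N \to \mathbf{Log}$, and that in this presentation the minimal objects are precisely those where the comparison map $\varphi : M_\xi \to M_S$ is an isomorphism. The verification that $\mathfrak M^{\mathrm{min}} \to \mathfrak N$ is an equivalence over $\mathbf{Sch}$ is carried out correctly: essential surjectivity is tautological, and full faithfulness follows because once $\varphi$ and $\varphi'$ are identities the logarithmic part $g^\flat$ of any morphism is forced to equal $L(\psi)^{-1}$. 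Your remark that stability of minimality under base change is what makes $\mathfrak M^{\mathrm{min}}$ fibered over $\mathbf{Sch}$ is also on point. This is exactly the content of Gillam's argument as presented in the cited references.
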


Throughout this paper, we present logarithmic moduli problems and indicate monoidal and tropical (see Section~\ref{sec:mintrop}) characterizations of their minimal objects to recover the underlying schematic moduli problems.

\subsubsection{Minimality as tropical representability}
\label{sec:mintrop}

We explain the concept in the case of stable maps for concreteness, where it becomes a tropical concept. This expands on~\cite[Remark~1.21]{GS13}. Let $\fM_{g,n}(Z)$ denote stack over $\mathbf{LogSch}$ parametrizing logarithmic maps from genus $g$, $n$-pointed curves to a toroidal scheme $Z$. Let $\Sigma$ be the fan of $Z$.

Let $S$ be a standard logarithmic point $\spec(\NN\to \CC)$ and let $[C\to Z]$ be a logarithmic map over $S$. As explained in Section~\ref{sec:tropicalization}, the morphisms on sheaves of monoids may be dualized to produce a tropical map
\[
\plC\to \Sigma.
\]
Replacing $\NN$ with an arbitrary toric monoid, one obtains a \textbf{family} of tropical maps. 

From our discussion of minimality, we see that given a logarithmic stable map over $\spec(P\to \CC)$, the monoid be cannot be arbitrary, since by pulling back via a morphism $P\to \RR_{\geq 0}$, we must obtain a tropical map. With this observation, there is a clear choice for a universal $P^{\min}$ such that all other enhancements $\spec(P\to \CC)$ of the same underlying map must be pulled back from $\spec(P^{\min}\to \CC)$. That is, we may choose $P^{\min}$ to be the monoid whose dual cone $\Hom(P^{\min},\RR_{\geq 0})$ is the cone of \textbf{all} tropical maps of the given combinatorial type.  {Succinctly, a logarithmic structure is minimal for a given moduli problem if it represents the \emph{tropical deformation space}.}

In Figure~\ref{fig: non-minimal-family} below, taking $Z = pt$, we depict the duals of the characteristic monoid on the base of a non-minimal family. If one drops the condition that $\ell_1$ and $\ell_2$ coincide, we obtain the corresponding minimal monoid.

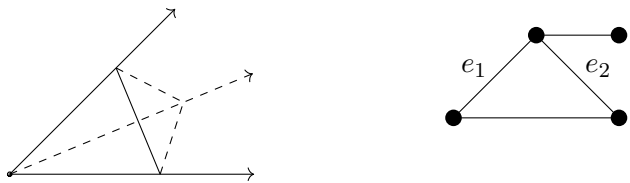
\begin{figure}[ht!]
\begin{tikzpicture}

\draw [ball color=black] (0,0) circle (0.3mm);
\draw [->] (0,0)--(3.25,0);
\draw [->] (0,0)--(2.2,2.2);
\draw [dashed,->] (0,0)--(22.5:3.5); 

\draw ({sqrt(2)},{sqrt(2)})--(2,0);
\draw [dashed] ({sqrt(2)},{sqrt(2)})--(22.5:2.5);
\draw [dashed] (2,0)--(22.5:2.5);

\begin{scope}[xshift=7cm, yshift=0.75cm, scale=1.1]
\coordinate (2) at (0,1);
\coordinate (3) at (-1,0);
\coordinate (4) at (1,0);
\coordinate (1) at (1,1);

\draw (1)--(2)--(3)--(4)--(2);

\fill (2) circle (1mm);
\fill (3) circle (1mm);
\fill (4) circle (1mm);
\fill (1) circle (1mm);

\node at (-0.75,0.6) {$e_1$};
\node at (0.75,0.6) {$e_2$};
\end{scope}

\end{tikzpicture}
\caption{Consider the cone of tropical curves whose underlying graph is shown on the right, such that the edge lengths of $e_1$ and $e_2$ are equal. This cone is $3$-dimensional. An associated family of logarithmic curves whose minimal monoid is dual to this cone associated family of logarithmic curves is non-minimal, due to the relation that these two edge lengths coincide coincide.}\label{fig: non-minimal-family}
\end{figure}

Applying this reasoning at each geometric fiber gives a criterion to check whether any given family of logarithmic maps $C \to Z$ over a logarithmic scheme $S$ is minimal. With the minimal objects identified, we construct a moduli stack as a fibered category over \textbf{Sch}, whose fiber over a scheme $\underline S$ is the groupoid of \textbf{minimal} logarithmic maps over $S$. 

\numberwithin{theorem}{subsection}
\subsection{Preliminaries from the prequel: radial alignments}\label{sec: prelims-from-prequel} The results of this paper rely on the notion of a radially aligned logarithmic curve and its canonical contraction to a curve with elliptic singularities. These concepts were developed in the companion article~\cite{RSW17A}, and we briefly recall the statements that we require.

Let $S$ be a logarithmic scheme enhancing the spectrum of an algebraically closed field and let $C\to S$ be a logarithmic curve over it whose fibers have genus~$1$ and let $\plC$ be its tropicalization. Given an edge $e$, we write $\ell(e)\in \overnorm M_S$ for the generalized edge length of this edge. For each vertex $v\in \plC$, there is a unique path from the circuit of $\plC$, namely the smallest subgraph of genus $1$, to the chosen vertex $v$. Write this path as $e_1,\ldots, e_n$. Define 
\[
\lambda(v) = \sum_{i=1}^n \ell(e_i).
\]
The resulting function $\lambda$ is a piecewise linear function on $\plC$ with integer slopes, and thus, a global section of $\overnorm M_C$. When $S$ is a general logarithmic scheme and $\pi: C\to S$ a curve, this section glues along specialization morphisms to give rise to a well-defined and canonical global section in $\Gamma(S,\pi_\star \overnorm M_S)$. 

Given a logarithmic curve $C\to S$ and a geometric point $s\in S$, we let $\plC_s$ denote the corresponding tropical curve associated to $C_s$. Recall also that we view a monoid $P$ as being the positive elements in a partially ordered group, with the partial order defined by $a\geq b$ if $a-b\in P\subset P^{\mathrm{gp}}$.

\begin{definition}
We say that a logarithmic curve $C\to S$ is \textbf{radially aligned} if $\lambda(v)$ and $\lambda(w)$ are comparable for all geometric points $s$ of $S$ and all vertices $v, w \in \plC_s$.

We write ${\mathfrak M}_{1,n}^{\mathrm{rad}}$ for the category fibered in groupoids over logarithmic schemes whose fiber over $S$ is the groupoid of radially aligned logarithmic curves over $S$ having arithmetic genus~$1$ and~$n$ marked points.
\end{definition}

The following result is proved in~\cite[Section 3]{RSW17A}.

\begin{theorem}
The category of radially aligned, prestable, logarithmic curves of genus~$1$ with $n$~marked points is represented by an algebraic stack with logarithmic structure $\fM_{1,n}^{\mathrm{rad}}$. The natural map
\[
\fM_{1,n}^{\mathrm{rad}}\to \fM_{1,n},
\]
is a logarithmic blowup.
\end{theorem}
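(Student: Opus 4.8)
The plan is to exhibit $\fM_{1,n}^{\mathrm{rad}}$ as a logarithmic modification of $\fM_{1,n}$ coming from an explicit subdivision of its tropicalization, and then to read off the stated moduli problem from the universal property of that subdivision. Since $\fM_{1,n}$ is logarithmically smooth, its tropicalization --- the cone stack underlying its Artin fan --- is the moduli space $\mathsf M^{\trop}_{1,n}$ of prestable genus~$1$ tropical curves with $n$ legs: it has one cone $\sigma_\Ga = \RR_{\geq 0}^{E(\Ga)}$ for each stable dual graph $\Ga$, glued along face maps induced by edge contractions and graph automorphisms. On each $\sigma_\Ga$ and for each vertex $v$ of $\Ga$, the quantity $\lambda(v)$ of Section~\ref{sec: prelims-from-prequel} --- the total length of the path from the circuit to $v$ --- is an integral linear function $\lambda_v\colon \sigma_\Ga\to\RR$, and, plugging a point of $\sigma_\Ga$ into the definition of Section~\ref{sec: prelims-from-prequel}, radial alignment at that point is precisely the requirement that the finitely many reals $\{\lambda_v\}_{v\in V(\Ga)}$ be pairwise comparable, i.e.\ totally ordered.

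First I would construct the subdivision. Inside each cone $\sigma_\Ga$, the hyperplane arrangement $\{\lambda_v = \lambda_w\}_{v\ne w}$ cuts $\sigma_\Ga$ into subcones whose maximal members are exactly the loci on which the $\lambda_v$ occur in a fixed weak total order; this is the same subdivision as the one induced by the convex piecewise linear function $h_\Ga := \sum_{v\ne w} |\lambda_v - \lambda_w|$ on $\sigma_\Ga$. Granting that the $h_\Ga$ restrict compatibly along the face maps of $\mathsf M^{\trop}_{1,n}$, they glue to a convex piecewise linear function $h$ on the cone stack, and the subdivision $\widetilde{\mathsf M}^{\trop}_{1,n}\to \mathsf M^{\trop}_{1,n}$ it defines is coherent; a coherent subdivision corresponds to a composite of Kato blowups, hence the associated modification is a \emph{logarithmic blowup}.

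Next I would invoke the equivalence between subdivisions of the cone stack of a logarithmically smooth algebraic stack and its logarithmic modifications \cite{CCUW}: the subdivision $\widetilde{\mathsf M}^{\trop}_{1,n}$ yields a logarithmically smooth algebraic stack with logarithmic structure $\fM^{\mathrm{rad}}_{1,n}$ together with a proper, representable logarithmic blowup $\fM^{\mathrm{rad}}_{1,n}\to \fM_{1,n}$, characterized by the property that a logarithmic morphism $T\to\fM_{1,n}$ --- equivalently, a prestable genus~$1$ logarithmic curve $C\to T$ --- lifts, and does so uniquely, to $\fM^{\mathrm{rad}}_{1,n}$ if and only if for every geometric point $t$ of $T$ the induced cone map $\Hom(\overnorm M_{T,t},\RR_{\geq 0})\to\sigma_{\Ga_t}$ factors through $\widetilde{\mathsf M}^{\trop}_{1,n}$. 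Unwinding this: the factorization holds at $t$ exactly when the images of the $\lambda_v$ in $\overnorm M_{T,t}$ are pairwise comparable, which is radial alignment of $C\to T$ at $t$; holding this at every geometric point, we conclude that $\fM^{\mathrm{rad}}_{1,n}$ represents the category fibered in groupoids of radially aligned prestable genus~$1$ logarithmic curves, and that its structure map to $\fM_{1,n}$ is the asserted logarithmic blowup.

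The step I expect to be the main obstacle is the compatibility bookkeeping used in the second paragraph: one must verify that the functions $\lambda_v$ --- and hence the arrangements $\{\lambda_v=\lambda_w\}$ and the functions $h_\Ga$ --- transform correctly under the face identifications of $\mathsf M^{\trop}_{1,n}$, in particular under edge contractions that collapse a loop, shrink the circuit, or merge the circuit with a tail, so that the per-cone subdivisions genuinely assemble into a subdivision of the cone stack rather than merely of the individual cones. A secondary point requiring care is the passage from the pointwise condition to the family: one should confirm that ``factoring through the subdivision at every geometric point'' is equivalent to factoring through it globally as a map of cone stacks, which is where the openness of comparability conditions on the base (as in Section~\ref{sec:char-sect}) enters. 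Once these are in place, the rest is a formal application of the subdivision/log-modification dictionary.
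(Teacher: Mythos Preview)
The paper does not actually prove this theorem here; it is stated as a preliminary result and the proof is deferred to~\cite[Section~3]{RSW17A}. Your proposal is correct and is essentially the argument carried out in that prequel: one subdivides the tropicalization of $\fM_{1,n}$ along the hyperplanes $\lambda_v=\lambda_w$, checks that this subdivision is compatible with the face maps of the cone stack, and then invokes the dictionary between subdivisions of the Artin fan and logarithmic modifications to produce $\fM_{1,n}^{\mathrm{rad}}$; the lifting criterion for the modification is exactly the comparability of the $\lambda(v)$, i.e.\ radial alignment.

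Two small remarks. First, you wrote ``stable dual graph'' where you mean \emph{prestable}: $\fM_{1,n}$ parameterizes prestable curves, so its cone stack has cones indexed by all prestable genus~$1$ graphs, and in particular is not a finite cone complex; this does not affect the argument but is worth getting right. Second, your convex function $h_\Ga=\sum_{v\ne w}|\lambda_v-\lambda_w|$ is a clean way to exhibit the subdivision as coherent, and hence the modification as a genuine logarithmic blowup rather than merely a logarithmic modification; the prequel phrases this slightly differently but the content is the same. The compatibility check you flag as the main obstacle is exactly the point requiring care in~\cite{RSW17A}, and it goes through because $\lambda$ is defined intrinsically from the graph (distance to the circuit) and therefore restricts correctly along edge contractions and is equivariant for graph automorphisms.
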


The second major construction in op.\ cit.\ is the construction of a contraction to a curve with elliptic singularities, from the data of a radially aligned curve with a chosen ``radius of contraction''. Let $C\to S$ be a radially aligned logarithmic curve of genus~$1$. We say that a section $\delta\in \overnorm M_S$ is \textbf{comparable to the radii of $C$} if for each geometric point $s\in S$, the section $\delta$ is comparable to $\lambda(v)$ for all vertices $v\in \plC_s$, in the monoid $\overnorm M_S$.

\begin{theorem}
Let $C\to S$ be a radially aligned logarithmic curve and $\delta\in \overnorm M_S$ a section comparable to the radii of $C$. Then, there exists a partial destablilization 
\[
\widetilde C\to C,
\]
and a contraction
\[
\widetilde C\to \overnorm C,
\]
where $\overnorm C\to S$ is a family genus~$1$ curves at worst Gorenstein genus $1$ singularities, {such that, for every geometric point $s$ of $S$, if $E$ is a component of $C_s$ such that $\lambda(E) < \delta_s$ then $E$ is contracted to a point in $\overnorm C$}.
\end{theorem}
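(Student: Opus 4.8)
The plan is to build the partial destabilisation $\widetilde C\to C$ and the contraction $\widetilde C\to\overnorm C$ fibrewise over geometric points, to check there that $\overnorm C$ has only nodes together with one Gorenstein genus~$1$ singularity by means of the local analysis preceding Proposition~\ref{prop:dualizing-generator}, and then to observe that every step is canonical, so that the construction spreads out over an arbitrary base $S$.

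\emph{The tropical picture and the destabilisation.} Suppose first that the underlying scheme of $S$ is the spectrum of an algebraically closed field, so that $\overnorm M_S$ is a sharp fine saturated monoid in which $\delta$ and all the radii $\lambda(v)$ are pairwise comparable. Since $\lambda$ has slope $\pm 1$ along each edge of $\plC$ and vanishes on the circuit, the locus $\{\lambda<\delta\}$ is a connected subgraph containing the circuit: if $\lambda(v)<\delta$ and $w$ lies on the path from the circuit to $v$, then $\lambda(w)\le\lambda(v)<\delta$. I would subdivide $\plC$ by inserting a vertex at the unique point of value $\delta$ on each edge exiting this subgraph; the resulting subdivided tropical curve is the tropicalisation of a canonical partial destabilisation $\widetilde C\to C$ obtained by inserting $\PP^1$-bridges, constructed exactly as the logarithmic modifications underlying $\fM_{1,n}^{\mathrm{rad}}$. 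Let $C_0\subset\widetilde C$ be the subcurve on the vertices with $\lambda<\delta$; it is connected, of arithmetic genus~$1$, and meets the closure of its complement at finitely many smooth points $q_1,\dots,q_m$ of $C_0$, each lying on a component of radius exactly~$\delta$.

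\emph{The contraction.} Let $c:\widetilde C\to\overnorm C$ be the contraction that collapses $C_0$ to a point $p$ and restricts to an isomorphism elsewhere, and take
\[
\mathscr O_{\overnorm C}=\bigl\{\, f\in c_\star\mathscr O_{\widetilde C}\ :\ f|_{C_0}\text{ is constant, and }\textstyle\sum_i c_i\,b_i(f)=0\,\bigr\},
\]
where $b_i(f)$ is the first-order coefficient of $f$ at $q_i$ along the branch transverse to $C_0$ there, and $(c_1,\dots,c_m)$ is a tuple of nonzero scalars, determined up to a common rescaling by the genus~$1$ structure of $C_0$ together with the radial alignment; this is a subsheaf of rings by the Leibniz rule. (Equivalently, one can realise $\overnorm C$ as the image of $\widetilde C$ under a linear system assembled from $\omega_{\widetilde C}$ and the line bundles of Section~\ref{sec: lb-from-pl}, which supplies the scalars $c_i$ automatically.) The crux is to identify $p$: computing $\omega_{\overnorm C}$ near $p$ as in the discussion preceding Proposition~\ref{prop:dualizing-generator}, the constraint ``constant along $C_0$'' contributes no $dx_1/x_1$ term, so $\omega_{\overnorm C}$ is generated near $p$ by a form \eqref{eqn:omega-E} with $c'=0$, and since all the $c_i$ are nonzero this form is a local generator of a line bundle; hence $p$ is the unique Gorenstein genus~$1$ singularity with $m$ branches, and the remaining singularities of $\overnorm C$ are the nodes it already possessed. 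That $\overnorm C$ again has arithmetic genus~$1$ follows from the exact sequence $0\to\mathscr O_{\overnorm C}\to c_\star\mathscr O_{\widetilde C}\to\mathscr Q\to 0$, in which $\mathscr Q$ is the length-one skyscraper at $p$ cut out by the linear condition, together with $\chi(c_\star\mathscr O_{\widetilde C})=\chi(\mathscr O_{\widetilde C})+\dim H^1(C_0,\mathscr O_{C_0})=\chi(\mathscr O_{\widetilde C})+1$; these give $\chi(\mathscr O_{\overnorm C})=\chi(\mathscr O_{\widetilde C})=0$. Finally, every component $E$ of $C_s$ with $\lambda(E)<\delta_s$ is a component of $C_0$, hence collapsed to $p$ by construction.

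\emph{Families, and the main obstacle.} Over a general base, $\lambda$ is a section of $\pi_\star\overnorm M_C$ that glues along specialisations, and comparability of $\delta$ with the radii makes the subdivision locus, hence $\widetilde C\to C$, well-defined over $S$; the scalars $c_i$ and the sheaf $\mathscr O_{\overnorm C}$ are given by the same recipe. Flatness of $\overnorm C\to S$, and the identification of its geometric fibres with the curves built above, follow from cohomology and base change applied to $c$, using that $R^1c_\star\mathscr O_{\widetilde C}$ has constant fibrewise length~$1$ and that $\mathscr Q$ is $S$-flat. I expect the main obstacle to be neither the destabilisation nor the abstract Gorenstein/genus~$1$ bookkeeping but the middle step: pinning down the crimping scalars $(c_i)$ canonically from the radial alignment — up to the unavoidable common scalar — so that the contraction is a well-defined functor of the input and compatible with base change. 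The genus-$1$ hypothesis enters precisely here, through the nonvanishing of every $c_i$, which is what forces the new point to be genuinely Gorenstein of genus~$1$ rather than a degenerate limit, exactly as in the computation recorded before Proposition~\ref{prop:dualizing-generator}.
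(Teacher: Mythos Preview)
This theorem is not proved in the present paper; it is imported from the prequel~\cite{RSW17A}, and here the paper offers only the one-sentence heuristic following the statement: subdivide $\plC$ at radius~$\delta$, then contract the interior of the resulting circle \emph{in a versal family}. Your destabilisation step matches this.

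Your contraction step takes a different route, and the obstacle you yourself flag at the end is a genuine gap, not a detail to be filled in later. You define $\mathscr O_{\overnorm C}$ as the subsheaf of $c_\star\mathscr O_{\widetilde C}$ cut out by the constraint $\sum_i c_i\, b_i(f)=0$, with the scalars $c_i$ ``determined up to a common rescaling by the genus~$1$ structure of $C_0$ together with the radial alignment'' --- but you never say how, and there is no such determination from those data alone. The genus~$1$ structure of $C_0$ singles out the line $H^0(C_0,\omega_{C_0})$, but converting a generator of that line into scalars at the smooth points $q_i$ requires trivialising each cotangent space $T^\vee_{q_i}C_0$; the radial alignment is a combinatorial datum living in $\overnorm M_S$ and supplies no such trivialisations. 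Without the $c_i$ pinned down, the construction is not a functor of the input, and your base-change paragraph has no content. The prequel sidesteps this entirely by contracting in a versal deformation (equivalently, by taking $\operatorname{Proj}$ of a section ring for a line bundle built from the piecewise-linear function~$\lambda$, as your parenthetical hints): the crimping is then \emph{produced} by the construction rather than chosen in advance. Your parenthetical is pointing in the right direction, but as written it is a gesture, not an argument --- one must name the line bundle, control its pushforwards and their base change, and then verify Gorenstein-ness of the resulting fibres, which is where the actual work of~\cite{RSW17A} lies.
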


An intuitive discussion of these concepts are presented in~\cite[Section~3.1]{RSW17A}. For working knowledge, reader may visualize the section $\delta$ as giving rise to a \textbf{circle of radius $\delta$} around the circuit of the tropical curve $\plC$. By subdividing the edges of $\plC$, one may produce a new tropical curve $\widetilde \plC$ such that every point of $\widetilde \plC$ at radius $\delta$ from the circuit is a vertex. This introduces valency $2$ vertices into the tropicalization, and induces the partial destabilization. By contracting the interior of the circle of radius $\delta$ in a versal family, one produces a curve with a Gorenstein singularity. 

\numberwithin{theorem}{section}
\section{Logarithmic maps to toric varieties}
\label{sec:log-maps-to-toric}

We construct the space of radially aligned logarithmic maps to a toric variety. The framework of radial alignments, together with the well-spacedness condition from tropical geometry, will lead to a proof of Theorem~\ref{thm: toric-targets}, which is the main result of this section. The symbol $Z$ will denote a proper toric variety with fan $\Sigma$. 

Recall that a morphism of polyhedral complex $\mathscr P\to \mathscr Q$ is a continuous map of the underlying topological spaces sending every polyhedron of $\mathscr P$ linearly to a polyhedron of $\mathscr Q$.

\begin{definition}\label{def: tsm}
A \textbf{tropical prestable map} or \textbf{tropical map} for short, is a morphism of polyhedral complexes
\[
F:\plC\to \Sigma
\]
where $\plC$ is an $n$-marked tropical curve, and the following conditions are satisfied.
\begin{enumerate}
\item For each edge $e\in \plC$, the direction of $F(e)$ is an integral vector. When restricted to $e$, the map has integral slope $w_e$, taken with respect to this integral direction. This integral slope is referred to as the \textbf{expansion factor} of $F$ along $e$. The expansion factor and primitive edge direction are together referred to as the \textbf{contact order} of the edge. 
\item The map $f$ is \textbf{balanced}: at all points of $\plC$ the sum of the directional derivatives of $F$ in each tangent direction is zero.
\end{enumerate}
The map is \textbf{stable} if it satisfies the following condition: if $p\in \plC$ has valence $2$, then the image of $\mathrm{Star}(v)$ is not contained in the relative interior of a single cone of $\Sigma$.
\end{definition}

Following Section~\ref{sec:tropicalization}, given a logarithmic prestable map to a toric variety 
\[
\begin{tikzcd}
(C,M_C) \arrow{d}\arrow{r}{f} & Z \\
(S,M_S), & 
\end{tikzcd}
\]
there is an associated family $\plC$ of tropical curves together with a map $[F: \plC\to \Sigma]$, satisfying the axioms of a tropical prestable map. 

\setcounter{subsection}{\value{theorem}}
\numberwithin{theorem}{subsection}
\subsection{Radial logarithmic maps} We begin with a construction of the stack of radially aligned logarithmic maps.

\begin{proposition} \label{prop:W-alg}
Let $Z$ be a toric variety.  There is an algebraic stack with logarithmic structure, $\mathfrak W(Z)$, parameterizing families of radially aligned curves $C$ and logarithmic morphisms $C \to Z$.  

The underlying algebraic stack of $\mathfrak W(Z)$ is locally quasifinite over the stack of ordinary prestable maps from radially aligned curves to $Z$, and its restriction to the open and closed substack of maps with fixed contact orders is quasifinite.
\end{proposition}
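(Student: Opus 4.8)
The plan is to realize $\mathfrak W(Z)$ as a logarithmic blowup of an already-understood moduli stack, and then to control the forgetful morphism to ordinary maps by examining the logarithmic data fibrewise over a geometric point.

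\emph{Algebraicity and the logarithmic structure.} I would take as known that the stack $\fM_{1,n}(Z)$ of logarithmic prestable maps to $Z$ (Section~\ref{sec:to-toric}) is algebraic with logarithmic structure and locally of finite type; this is due to Gross--Siebert, Abramovich--Chen and Wise~\cite{GS13,AC11,Wis16a}, and can also be seen by factoring through the logarithmically smooth stack $\fM_{1,n}(\mathscr A_Z)$ via a morphism that is strict and representable because $Z\to\mathscr A_Z$ is~\cite{AW}. Now $\mathfrak W(Z)$ is by definition the stack whose $S$-points are a radially aligned logarithmic curve $C\to S$ together with a logarithmic map $C\to Z$, and the functor forgetting the map identifies the forgetful morphism $\mathfrak W(Z)\to\fM_{1,n}(Z)$ with the base change of $\fM^{\mathrm{rad}}_{1,n}\to\fM_{1,n}$ along the domain-curve morphism $\fM_{1,n}(Z)\to\fM_{1,n}$. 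Since that map is a logarithmic blowup (the theorem recalled in Section~\ref{sec: prelims-from-prequel}), and logarithmic blowups are representable, proper, and stable under base change, $\mathfrak W(Z)$ is algebraic with logarithmic structure; Theorem~\ref{thm:gillam} then exhibits its underlying algebraic stack as the stack of minimal objects.

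\emph{Local quasifiniteness over ordinary maps.} Let $\mathfrak N$ denote the stack whose $S$-points are a radially aligned logarithmic curve $C\to S$ together with a morphism of schemes $\underline C\to Z$, and let $u\colon\underline{\mathfrak W}(Z)\to\mathfrak N$ be the morphism forgetting the logarithmic enhancement of the map. Both stacks are algebraic and locally of finite type, so $u$ is locally quasifinite provided every geometric fibre is discrete, and it is enough to bound each such fibre. Over a geometric point --- a radially aligned logarithmic curve $C$ over a logarithmic point $\Spec(P\to k)$ and an underlying map $\varphi\colon\underline C\to Z$ --- a logarithmic lift amounts to: first, a minimal logarithmic structure on the base, which by the tropical interpretation of minimality (Section~\ref{sec:mintrop}) is recorded by a combinatorial type, i.e.\ a balanced tropical map $\plC\to\Sigma$; here the cone of $\Sigma$ attached to a vertex is forced by $\varphi$ (it is the cone of the torus orbit into which $\varphi$ carries the generic point of the corresponding component), and the expansion factors along edges and legs are forced by the orders to which $\varphi$ meets the toric boundary along the corresponding nodes and marked points, so that --- the dual graph of $\underline C$ being already fixed --- there are only finitely many such types. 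Second, given the type, one must lift $\varphi^\sharp$ to a compatible morphism $N^\vee\to\Gamma(C,M_C^{\mathrm{gp}})$ in the sense of Section~\ref{sec:to-toric}.

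\emph{The main obstacle.} The crux is the rigidity of this second step: that for a fixed underlying map $\varphi$ and a fixed combinatorial type, the logarithmic morphism is determined up to the automorphisms of the base logarithmic point, hence gives a single point of the fibre of $u$. Here I would use that $Z$ is toric, so that by Section~\ref{sec:to-toric} a logarithmic map to $Z$ is precisely the datum on characteristic monoids together with a lift to $M_C^{\mathrm{gp}}$, and the lift is rigidified by $\varphi$ wherever the pertinent monomials are invertible; on the closed strata the remaining ambiguity is a torsor under $\Hom(\overnorm M_Z^{\mathrm{gp}},H^0(\underline C,\mathscr O_{\underline C}^\star))$, and since $\underline C$ is proper, reduced and connected, $H^0(\underline C,\mathscr O_{\underline C}^\star)=k^\star$, so this torsor is exactly the orbit of the automorphism group of $\Spec(P\to k)$ acting on the lift. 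Thus the fibre of $u$ over $\varphi$ has one point (a gerbe) per combinatorial type, hence is finite, and $u$ is locally quasifinite. Finally, the curve class and the contact orders of the marked points are locally constant on both $\mathfrak W(Z)$ and $\mathfrak N$, and on an open and closed substack where they are fixed $\mathfrak N$ is of finite type, so there the locally quasifinite morphism $u$ is quasifinite.
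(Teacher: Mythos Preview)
Your algebraicity argument is correct and takes a slightly different route from the paper: you realize $\mathfrak W(Z)$ as the fs fibre product $\fM^{\mathrm{rad}}_{1,n}\times_{\fM_{1,n}}\fM_{1,n}(Z)$ and invoke that logarithmic blowups are stable under base change, whereas the paper simply views $\mathfrak W(Z)$ as the space of logarithmic maps from the universal curve over $\fM^{\mathrm{rad}}_{1,n}$ to $Z$ and cites \cite[Corollary~1.1.1]{Wis16a}. Both are fine; the paper's is shorter, yours makes the relation to $\fM_{1,n}(Z)$ more explicit.

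For local quasifiniteness you are essentially reproving by hand what the paper cites from \cite[Theorem~1.1]{Wis16b} and \cite[Proposition~3.6.3]{R15b}. The outline is right --- finitely many combinatorial types over a fixed geometric point of $\mathfrak N$, and rigidity of the lift once the type is fixed --- but the torsor step is imprecise: $\overnorm M_Z^{\mathrm{gp}}$ is a sheaf on $Z$, not a single group, and identifying the ambiguity in the lift with the orbit of $\Hom(P^{\mathrm{gp}},k^\star)$ requires an argument (it uses the minimality of $P$, not just properness of $\underline C$). This can be fixed, but as written it is a sketch rather than a proof.

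The genuine gap is in the last step. Your deduction of quasifiniteness from local quasifiniteness rests on the claim that, with curve class and contact orders fixed, $\mathfrak N$ is of finite type. It is not: $\mathfrak N$ parametrizes \emph{prestable} maps, and one may insert arbitrarily many contracted rational components, so no open and closed substack cut out by numerical invariants of the map is of finite type. The paper's argument is different: it invokes the boundedness of combinatorial types for fixed contact orders \cite[Theorem~3.8]{GS13}, which bounds the number of logarithmic enhancements of any given schematic map uniformly and hence shows the forgetful morphism itself is of finite type. You need to replace the finite-type claim on the target by this boundedness argument on the morphism.
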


\begin{proof}
Let $\fM^{\mathrm{rad}}_{1,n}$ be the stack of radially aligned, $n$-marked, genus~$1$ logarithmic curves (Section~\ref{sec: prelims-from-prequel}) and let $C$ be its universal curve.  Then $\mathfrak W(Z)$ is the space of logarithmic prestable maps from $C$ to $Z$, and this is representable by an algebraic stack with a logarithmic structure~\cite[Corollary~1.1.1]{Wis16a}.  The local quasifiniteness is a consequence of~\cite[Theorem~1.1]{Wis16b} or~\cite[Proposition~3.6.3]{R15b}; under the assumption of fixed contact orders, the combinatorial types of a map $[C \to Z]$ are bounded~\cite[Theorem~3.8]{GS13}, and therefore $\mathfrak W(Z)$ is quasifinite over the space of maps of underlying schemes.
\end{proof}

Stability in $\mathfrak W(Z)$ is defined in terms of the underlying schematic map:

\begin{definition}
A radial map $[f:C\to Z]$ in $\mathfrak W(Z)$ over $\spec(\CC)$ is said to be \textbf{stable} if it satisfies the following conditions. 
\begin{enumerate}
\item If $D\subset C$ is an irreducible component of genus $0$ contracted by $f$, then $D$ supports at least $3$ special points.
\item If $C$ is a smooth curve of genus $1$, then $C$ is not contracted. 
\end{enumerate}
A family of ordered logarithmic maps is stable if each geometric fiber is stable. 
\end{definition}


\setcounter{subsubsection}{\value{theorem}}
\subsubsection{Minimal monoids}

We give a tropical description of the logarithmic structure of $\mathfrak W(Z)$.  We leave it to the reader to verify that this description is correct, either using \cite[Appendix~C.3]{Wis16a} or adapting the arguments from~\cite[Section 3]{Che10} or~\cite[Proposition~1.22]{GS13}.


The minimality condition may be checked on geometric fibers, so we assume that the underlying scheme of $S$ is $\Spec C$.  Let $\sigma_S$ be the corresponding dual cone $\Hom(\overnorm M_S,\RR_{\geq 0})$ of the characteristic monoid of $S$.  By forgetting the alignment, a radial map $[f]$ above produces a usual logarithmic map with combinatorial type $\Theta$. Letting $\sigma_\Theta$ be the associated cone of tropical maps, we have a morphism of cones
\[
\sigma_S \to \sigma_\Theta.
\]
In the tropical moduli cone $\sigma_\Theta$ above, the locus of tropical curves whose vertices are ordered in the same manner as $C$ forms a cone $\sigma(f)$. 

\setcounter{theorem}{\value{subsubsection}}
\begin{definition}
Let $f:C\to Z$ be a family of ordered logarithmic maps over a logarithmic base $S$. The map $[f]$ is a \textbf{minimal} ordered logarithmic map if for each geometric point $\overnorm s\in S$, there is an isomorphism of cones
\[
\Hom(\overnorm M_{S,\overnorm s},\RR_{\geq 0})\cong \sigma(f_{\overnorm s}). 
\]
\end{definition}


\subsection{The factorization property}
\label{sec:factorization}
To detect the curves that smooth to the main component, we will need to identify certain contractions of the source curve constructed from the tropical maps and use the methods developed in~\cite{RSW17A}.

Let $\overnorm C$ be a Gorenstein curve of arithmetic genus $1$. We will refer to $E$, the smallest connected subcurve of $C$ of arithmetic genus $1$, as the \textbf{circuit component} of $C$. Given a family $C\to S$, we give the nodes and markings the standard logarithmic structure, and we give $C$ the trivial logarithmic structure near any genus~$1$ singularities.

Given an aligned logarithmic curve $C$ of genus $1$ and a contraction $C\to \overnorm C$, we may equip $\overnorm C$ with the logarithmic structure defined above. This enhances $C\to \overnorm C$ to a logarithmic morphism.

Let $(C,M_C)\to (S,M_S)$ be a radially aligned logarithmic curve and let $Z$ be a toric variety with cocharacter lattice $N$.  We associate a section $\delta_f \in \overnorm M_S$ to a logarithmic map $f:C\to Z$ over $S$. Let $\plC$ be the tropicalization of the curve $C$ with circuit $\plC_0$. Consider the associated family of tropical maps
\[
\varphi: \plC\to N_{\RR}
\]
If $\varphi$ does not contract the circuit, then let $\delta_f = 0$. Otherwise, let $\delta_f$ be the minimum distance from $\plC_0$ to a $1$-valent vertex of $\varphi^{-1}(\varphi(\plC_0))$. That is, the distance to the closest vertex supporting a flag that is not contracted by $\varphi$.  See Figure~\ref{fig: tropical-cricles} for an example. 

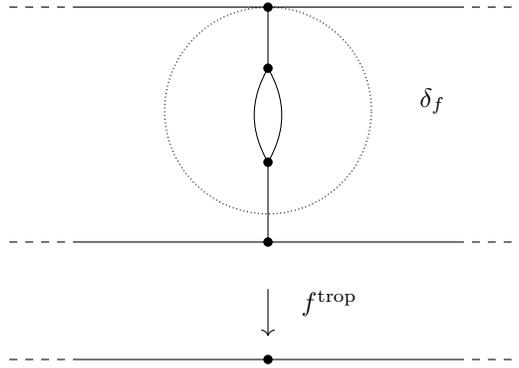
\begin{figure}[ht!]
\begin{tikzpicture}[scale=1.25]
\draw[dashed] (-2.75,0)--(-2,0);
\draw[dashed] (2,0)--(2.75,0);
\draw (-2,0)--(2,0);

\draw[dashed] (-2.75,-2.5)--(-2,-2.5);
\draw[dashed] (2,-2.5)--(2.75,-2.5);
\draw (-2,-2.5)--(2,-2.5);

\fill (0,0) circle (0.5mm);
\fill (0,-2.5) circle (0.5mm);

\draw (0,0)--(0,-0.65);
\path (0,-0.65) edge [bend left] (0,-1.65);
\path (0,-0.65) edge [bend right] (0,-1.65);
\draw (0,-1.65)--(0,-2.5);

\draw[dashed] (-2.75,-3.75)--(-2,-3.75);
\draw[dashed] (2,-3.75)--(2.75,-3.75);
\draw (-2,-3.75)--(2,-3.75);

\draw[->] (0,-3)--(0,-3.5);
\draw[densely dotted] (0,-1.1) circle (1.1);
\node at (1.75,-1) {\small $\delta_f$};
\node at (0.65,-3.15) {\small $f^\trop$};

\fill (0,-3.75) circle (0.5mm);
\fill (0,-0.65) circle (0.5mm);
\fill (0,-1.65) circle (0.5mm);

\end{tikzpicture}
\caption{A tropical map from a genus $1$ curve to $\Sigma_{\PP^1}$ contracting a circuit. The dotted circle corresponds to the circle whose radius is the minimal distance to a vertex supporting a non-contracted flag.}\label{fig: circle-around-circuit}\label{fig: tropical-cricles}
\end{figure}

These data will produce a rational bubbling of the source curve and a contraction thereof. First, subdivide $\plC$ such that every edge of $\varphi^{-1}(\varphi(\plC_0))$ terminates at a vertex; in Figure~\ref{fig: circle-around-circuit}, this amounts to introducing a vertex where the dotted circle crosses the lower vertical edge. This induces a logarithmic modification
\[
\tau: \widetilde C\to C.
\]
By the constructions of Section~\ref{sec: prelims-from-prequel}, we obtain a section comparable to the radii of $C$, and there is now an induced contraction 
\[
\gamma: \widetilde C\to \overnorm C,
\]
to a curve with a Gorenstein elliptic singularity. 

\begin{definition}
Keeping the notations above, an ordered logarithmic map $C\to Z$ as above is said to have the \textbf{factorization property} if the associated map $\widetilde C\to C\to Z$ factorizes as
\[
\begin{tikzcd}
\widetilde{C} \arrow{rr}{f}\arrow{rd} & & Z \\
& \overnorm{C}\arrow[swap]{ur}{\overnorm f}.
\end{tikzcd}
\]
\end{definition}

\begin{remark}
Note that it need not be the case that the map $\overnorm f$ is nonconstant on a branch of the elliptic singularity. This is because the map $\overnorm C\to Z$ may have highly degenerate contact order with the boundary of $Z$; it could be that the entire elliptic component is contracted. However, one may always replace $Z$ with a logarithmic modification $\mathscr Z$, i.e., a toric degeneration, such that the genus $1$ component maps to the dense torus of one of the components of $\mathscr Z$. In such an expansion, there will be at least one branch of the singularity along which the map is nonconstant. For example, if $Z = \PP^1$, it could be that the genus $1$ subcurve is contracted to one of the relative points $0$ or $\infty$ on $\PP^1$. In this case, one must first expand $\PP^1$ to a chain $\PP^1_{\mathrm{exp}}$, until the curve maps to the dense torus of a component in $\PP^1_{\mathrm{exp}}$. The choice of radius forces that the factorization is nonconstant on a branch of the singularity.
\end{remark}

Recall from Section~\ref{sec:to-toric} that a map $f : C \to Z$, where $Z$ is a toric variety with character lattice $N^\vee$, induces a homomorphism
\begin{equation*}
\alpha : N^\vee \to \Gamma(C, M_C^{\rm gp}) .
\end{equation*}
The factorization property depends only on $\alpha$ and not specifically on the morphism of toric varieties $C \to Z$.  For example, if $Z$ were a toric modification of another toric variety $Z'$, then the factorization properties for $C \to Z$ and $C \to Z'$ would coincide.  We offer a definition of the factorization property that makes this independence explicit.

\begin{definition} \label{def:factorization-no-toric}
Let $N$ and $N^\vee$ be finitely generated, free abelian groups, dual to each other, and let $C$ be a logarithmic curve over $S$.  Assume given $\alpha : N^\vee \to \Gamma(C, M_C^{\rm gp})$ and let $\overnorm\alpha$ be the induced morphism valued in $\Gamma(C, \overnorm M_C^{\rm gp})$.  Let $\plC_s$ be the tropicalization of $C_s$, for each geometric point $s$ of $S$.  Define $\delta_\alpha \in \Gamma(S, \overnorm M_S)$ fiberwise to be the largest $\lambda(v)$, among $v \in \plC$, such that $\overnorm\alpha$ is constant when viewed as a piecewise linear function on $\plC$.

Let $\upsilon : \widetilde C \to C$ and $\tau : \widetilde C \to \overnorm C$ be the destabilization and contraction constructed as above.  We say that $\alpha$ \textbf{satisfies the factorization property} if $\upsilon^\star \alpha$ descends along $\tau$ to $N^\vee \to \Gamma(\overnorm C, M_{\overnorm C}^{\rm gp})$.
\end{definition}

\begin{remark}
The factorization property is equivalent to requiring that $\widetilde C \to C \to N\otimes \mathbf G_m^{\mathrm{log}}$ factor through the contracted curve $\overnorm C$.
\end{remark}

\subsection{The stack of well-spaced logarithmic maps}\label{sec: well-spaced-logmaps} In this section, we construct a stack that we will later identify as the main component of the moduli space of genus~$1$ maps to a toric variety.  

We begin with some geometric motivation. Let $H$ be a subtorus of the dense torus $T$ of $Z$. After replacing $Z$ with a toric modification, there is a toric compactification $Z_H$ of the quotient torus $T/H$ and a toric morphism
\[
Z\to Z_H,
\]
extending the projection $T\to T/H$.

Let $f:C\to Z$ be a radial map over $S$, let $H$ a subtorus of the dense torus $T$, and assume that $Z\to Z_H$ exists for some $T/H$-toric variety $Z_H$. We say that $[f]$ satisfies the \textbf{factorization property for $H$} if the induced logarithmic map
\[
C\to Z\to Z_H
\]
satisfies the factorization property. 

This definition cannot be applied to an arbitrary toric variety $Z$ and an arbitrary subtorus $H \subset T$, since there may not be a toric map from $Z$ to an equivariant compactification of $T/H$.  For example, consider $Z = \PP^2$ and let $H$ be any $1$-dimensional subtorus. Since there is no non-constant map from $\PP^2$ to $\PP^1$, the assumption fails. 

There are two ways in which to overcome the issue. The first is to replace $Z$ with a logarithmic modification, which requires replacing $C$ with a logarithmic modification.  This logarithmic modification may not be defined over the base $S$, until we perform a logarithmic modification of $S$ as well~\cite[Proposition~4.5.2]{ACMW}.

It is conceptually simpler to use Definition~\ref{def:factorization-no-toric}, which does not require the map $Z \to Z_H$, but only the map of tori $T \to T/H$.  Indeed, let $N^\vee$ be the character lattice of $T$ and let $N^\vee_{T/H}$ be the character lattice of $T/H$.  Then the factorization property for $C \to Z \to Z_H$ is equivalent to the factorization property for the composition
\begin{equation*}
N^\vee_{T/H} \to N^\vee \to \Gamma(C, M_C^{\rm gp}) .
\end{equation*}
With this as motivation, we arrive at our definition:

\begin{definition} \label{def:fact-subtorus}
Let $f : C \to Z$ be a map from a radially aligned logarithmic curve to a toric variety $Z$ with dense torus $T$ and character lattice $N^\vee$.  Let $H$ be a subtorus of $T$ and let $N^\vee_{T/H}$ be the character lattice of $T/H$.  We say that $f$ \textbf{satisfies the factorization property} for $H$ if the map
\begin{equation*}
N^\vee_{T/H} \to N^\vee \to \Gamma(C, M_C^{\rm gp}) .
\end{equation*}
satisfies the factorization property of Definition~\ref{def:factorization-no-toric}.
\end{definition}

Geometrically, the condition is that $C \to N\otimes \mathbf{G}_{\log}\to N_{T/H}\otimes \mathbf{G}_{\log}$ should factor through $\overnorm C_H$, where $\overnorm C_H$ is constructed from $C \to N_{T/H} \otimes \mathbf{G}_{\log}$ as in Section~\ref{sec:factorization}. The equivalence between the formulations is a tautology: $\mathbf{G}_{\log}$ may simply be defined as the representing objects for global sections of $M_X^{\rm gp}$.

\begin{definition} \label{def:well-spaced}
Let $Z$ be a toric variety.
A radial logarithmic map $f:C\to Z$ is \textbf{well-spaced} if $f$ satisfies the factorization property for all subtori $H$ of $T$. 

Let $\mathcal W(Z)$ denote the category fibered in groupoids, over logarithmic schemes, of stable, well-spaced, radially aligned logarithmic stable maps to $Z$.
\end{definition}

Given a splitting $N_{T/H}^\vee \simeq \mathbf Z^r$, the factorization property  for $H$ is the conjunction of the factorization properties with respect to the tori dual to the direct summands of $\mathbf Z^r$.  When $N_{T/H}^\vee$ has rank~$1$, the factorization property asserts that a section of $M_{\tilde C}^{\rm gp}$ descends to a section of $M_{\overnorm C}^{\rm gp}$.  It is in this form that we will verify the algebraicity of the factorization property, below.



Let $C$ be a family of radially aligned logarithmic curves over $S$, assume that a section $\delta \in \Gamma(S, \overnorm M_S)$ is given, that $\tilde C \to C$ and $\tau : \tilde C \to \overnorm C$ are the associated semistable model and contraction of genus~$1$ component, respectively, and that $E \subset \tilde C$ is the exceptional locus of $\tau$.  We write $\pi : C \to S$ and $\overnorm\pi : \overnorm C \to S$ for the projections.  We fix a section $\alpha$ of $M_C^{\rm gp}$.  We will use $F$ for the subfunctor of the functor represented by $S$ on logarithmic schemes consisting of those $f : T \to S$ such that the restriction of $\alpha$ along $f$ has the factorization property --- in other words, the pullback of $\alpha$ to $\tilde C_T$ descends along $\tilde C_T \to \overnorm C_T$.  

In the following statements, we will say that the factorization property has a certain trait to mean that the functor $F$ has that trait, relative to the functor represented by $S$.  To unclutter the notation slightly, we will also assume without loss of generality, that $\tilde C \to C$ is an isomorphism, since doing so entails no loss of generality.

We begin by showing that the moduli functor of factorized maps is representable.

\begin{theorem} \label{thm:fact-rep}
The factorization property is representable by a (not necessarily strict) closed embedding of logarithmic schemes.
\end{theorem}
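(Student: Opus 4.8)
The plan is to reduce the statement to a concrete local computation on the contracted curve $\overnorm C$ and its exceptional locus $E$, and then verify that ``$\upsilon^\star\alpha$ descends along $\tau$'' is cut out by the vanishing of finitely many functions. First I would note that the question is local on $S$, so I may assume $\overnorm M_S$ is a fine, saturated, sharp monoid with a chart, and that $\delta \in \Gamma(S,\overnorm M_S)$ and $\alpha \in \Gamma(C, M_C^{\rm gp})$ are fixed. By the remark following Definition~\ref{def:fact-subtorus}, the factorization property in the rank~$1$ case is the assertion that a section of $M_{\tilde C}^{\rm gp}$ descends to a section of $M_{\overnorm C}^{\rm gp}$, and the general case is a finite conjunction of such; so it suffices to treat a single section $\alpha$ of $M_C^{\rm gp}$ (using the reduction, assumed WLOG, that $\tilde C \to C$ is an isomorphism).

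The key structural point is that $\tau : C \to \overnorm C$ is an isomorphism away from the exceptional locus $E$ (the union of the components $D$ with $\lambda(D) < \delta$), and over the genus~$1$ singular point $p = \tau(E)$ the curve $\overnorm C$ is Gorenstein of genus~$1$ by the theorem recalled at the end of Section~\ref{sec: prelims-from-prequel}. The characteristic sheaf $\overnorm M_{\overnorm C}^{\rm gp}$ at $p$ is the sublattice of $\overnorm M_{C}^{\rm gp}|_E$-sections that are \emph{constant} on $E$ — this is exactly the content of the tropical description of $\delta_\alpha$ in Definition~\ref{def:factorization-no-toric}, since a piecewise linear function on $\plC$ descends to the tropicalization of $\overnorm C$ precisely when it is constant on the contracted subgraph. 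Therefore $\overnorm\alpha$ descends as a section of $\overnorm M_{\overnorm C}^{\rm gp}$ if and only if $\delta_\alpha \geq \delta$, a condition on the base monoid $\overnorm M_S$ that is closed (the locus where one monoid element dominates another is closed, since it is the preimage of a face). On the locus where $\overnorm\alpha$ already descends topologically, the remaining obstruction to descending the honest section $\alpha$ of $M_{\overnorm C}^{\rm gp}$ — rather than just its image in $\overnorm M^{\rm gp}$ — is a unit-valued discrepancy: trivializing $\overnorm\alpha$ over a neighborhood of $E$, the section $\alpha$ becomes a section $u$ of $\mathscr O_C^\star$ near $E$, and $\alpha$ descends iff $u$ is pulled back from $\mathscr O_{\overnorm C}^\star$ near $p$, equivalently iff $u$ is constant along each connected contracted subcurve. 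Since $H^0(E, \mathscr O_E) = k$ (the fibers of $\tau$ are connected and the contracted locus in each geometric fiber is proper and connected), this is the condition that $u - u(p)$ lies in the ideal of $E$; equivalently that the image of $u$ in $\Gamma(E, \mathscr O_E^\star / \overnorm\pi^{-1}\overnorm\pi_\star(\ldots))$, read off from finitely many Taylor coefficients, vanishes. I would package this as: the factorization property is the intersection of the closed condition $\delta_\alpha \geq \delta$ with the vanishing locus of a map from $S$ (near a point of the relevant stratum) to an affine space $\mathbf A^N$ recording the branch-to-branch differences of $u$ at $p$, hence a closed embedding.

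Concretely, the steps in order: (1) localize on $S$ and reduce to one section $\alpha$ and the rank~$1$ case; (2) identify the exceptional locus $E \subset C$ and observe $\tau$ is an isomorphism off $E$; (3) use the tropical/piecewise-linear dictionary of Section~\ref{sec:char-mon} to show $\overnorm\alpha$ descends topologically iff $\delta_\alpha \geq \delta$, and check this is a closed condition on $\overnorm M_S$ — which also accounts for why the embedding need not be \emph{strict}, as the pulled-back log structure along this closed subscheme may be smaller; (4) on that closed substack, trivialize $\overnorm\alpha$ near $E$, convert $\alpha$ into a unit $u$, and show descent of $u$ along $\tau$ is the vanishing of the finitely many branchwise differences $u|_{\text{branch }i}(p) - u|_{\text{branch }j}(p)$ (using $H^0(E,\mathscr O_E) = k$), a closed condition on $\underline S$; (5) assemble (3) and (4) into a single (non-strict) closed embedding, and note compatibility with base change is automatic since every step was defined by pullback of functions and monoid inequalities.

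The main obstacle I anticipate is step~(4): carefully matching the condition ``$u$ is pulled back from $\overnorm C$'' with a \emph{finite} set of equations valid in families, i.e., controlling the scheme structure. Away from the genus~$1$ singular fibers, $\tau$ is an isomorphism and there is nothing to check; the subtlety is entirely concentrated along the closed locus where the circuit is contracted, where one must make sure that ``descends along $\tau$'' for $M^{\rm gp}$ is detected by conditions on the base and not merely fiberwise — this is where one invokes that $\overnorm\pi_\star \mathscr O_{\overnorm C} = \mathscr O_S$ and the cohomology-and-base-change behavior of $\tau_\star \mathscr O_C$ from the prequel's construction of the contraction. Once representability by \emph{some} closed subfunctor is in hand, the fact that it is a \emph{logarithmic} closed embedding (possibly non-strict, since the inequality $\delta_\alpha \geq \delta$ may force a non-trivial pullback of the log structure) follows formally.
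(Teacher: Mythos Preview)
Your overall decomposition---separate the descent of $\overnorm\alpha$ (a monoid inequality) from the descent of the residual unit $u$ (a scheme-theoretic condition)---matches the paper's reduction, and your treatment of the non-strictness via a minimal characteristic monoid is essentially Proposition~\ref{prop:minimal}. The gap is in step~(4), and it is not just a matter of care.

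The condition you write down in step~(4) is vacuous. Since $u$ is a regular function on all of $C$ and $E$ is proper, connected, and reduced, $u\big|_E$ is automatically a constant $c$, and the restrictions of $u$ to the branches of $\overnorm C$ at $p$ (the components of $C$ meeting $E$) automatically take the value $c$ at the attaching nodes. So the ``branchwise differences $u|_{\text{branch }i}(p) - u|_{\text{branch }j}(p)$'' all vanish identically; what you have written is the condition for descent to the \emph{seminormalization} of $\overnorm C$, which is automatic here. Descent to $\overnorm C$ itself is the additional genus-$1$ constraint of Section~\ref{sec: genus-1-singularities}: writing $u = c + b_j x_j + \cdots$ on the $j$th branch, one needs $\sum_j c_j b_j = 0$ for certain nonzero constants $c_j$ determined by the singularity. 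This is a condition on first-order jets, not on values, and in a family the coefficients $c_j$ and even the set of branches vary with the base point, so it is not at all clear that the condition is cut out by a fixed finite set of equations compatible with base change.

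The paper avoids this by never writing the ideal down. Instead it verifies Artin-style criteria: the locus is constructible (Proposition~\ref{prop:constructible}), closed under specialization by the valuative criterion (Proposition~\ref{prop:proper}), homogeneous with respect to infinitesimal pushouts (Proposition~\ref{prop:homogeneous}), and detected formally over complete local rings via the theorem on formal functions (Proposition~\ref{prop:integration}). These together manufacture a maximal strict closed subscheme $S' \subset S$ representing the factorization property, and Proposition~\ref{prop:minimal} then equips $S'$ with its minimal logarithmic structure. Your direct approach could in principle be salvaged by analyzing the $\mathscr O_S$-module $\tau_\star\mathscr O_C / \mathscr O_{\overnorm C}$ and its base-change behavior, but that is exactly the ``main obstacle'' you flagged, and the paper's method is designed to bypass it entirely.
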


The proof proceeds as in Artin's criteria for algebraicity, although we do not need to invoke Artin's criteria directly because the morphism in question will turn out to be a closed embedding.  


The following two propositions refine \cite[Theorem~4.3]{RSW17A}, with essentially the same proof:

\begin{proposition} \label{prop:constructible}
The set of points of $S$ where $\alpha$ satisfies the factorization property is constructible.
\end{proposition}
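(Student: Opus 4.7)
The plan is to reduce to a stratification by tropical type and then verify that on each stratum the residual condition is closed.  The factorization property for $\alpha$ at a point $s\in S$ is that the restriction $\alpha_s$ descends from $C_s$ to $\overnorm C_s$ along the contraction $\tau_s$, and this condition naturally splits into a combinatorial part (the piecewise linear shadow $\overnorm\alpha$ descends) and an algebraic part (the associated unit descends on the contracted locus).

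The first step is to stratify $S$ by the combinatorial type of $(C,\alpha,\delta)$.  By the discussion of minimal logarithmic structures in Section~\ref{sec:minimality}, the dual graph of $C$ together with the piecewise linear function $\overnorm\alpha\in\Gamma(C,\overnorm M_C^{\rm gp})$ and the section $\delta\in\overnorm M_S$ give rise to a finite, locally closed stratification of $S$.  It suffices to check constructibility on each stratum separately.  On a given stratum the combinatorial part of the factorization property, namely that $\overnorm\alpha$ is constant on the image in $\plC$ of the exceptional locus $E$ (equivalently, that $\overnorm\alpha$ pulled back to $\tilde C$ descends to a PL function on the tropicalization of $\overnorm C$), depends only on the stratum and so either holds identically on the stratum or fails identically.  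The strata on which it fails contribute nothing and may be discarded.

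On a stratum where the combinatorial condition holds, the restriction $\alpha|_E$ is, after the canonical adjustment by $\overnorm\alpha$, a section of $\mathscr O_E^\star$.  Its descent along $\tau|_E : E \to \tau(E)$ amounts to requiring that its value on each branch at the genus~$1$ singular point of $\overnorm C$ agree; more intrinsically, since $\tau_\star \mathscr O_{\tilde C}/\mathscr O_{\overnorm C}$ is a coherent sheaf supported at the singular point of $\overnorm C$ (because $\overnorm C$ is Gorenstein of genus~$1$, cf.\ Section~\ref{sec: genus-1-singularities}), the obstruction to descent is a section of a coherent sheaf on $\overnorm\pi_\star(\tau_\star \mathscr O_{\tilde C}/\mathscr O_{\overnorm C})$, whose vanishing locus is closed on the stratum.

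Concatenating over the finitely many strata, the factorization locus is exhibited as a finite union of locally closed subsets of $S$, hence constructible.  The main subtlety is the clean bookkeeping in the splitting of the factorization condition into a purely combinatorial part that is constant on a stratum of the tropical moduli and an algebraic part that can be presented as vanishing of a coherent sheaf; once this separation is in place, constructibility is a formal consequence.
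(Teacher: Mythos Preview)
Your approach is essentially the same as the paper's. Both arguments stratify by combinatorial type (the paper does this by passing to the constructible topology and assuming the dual graph is constant), observe that the condition ``$\overnorm\alpha$ is constant on $E$'' depends only on the stratum, and then reduce the remaining algebraic condition to descent of a unit along $\tau$, which the paper handles by reference to \cite[Theorem~4.3]{RSW17A} and you handle by identifying the obstruction as a section of the coherent sheaf $\overnorm\pi_\star(\tau_\star\mathscr O_{\tilde C}/\mathscr O_{\overnorm C})$.

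One small point worth tightening: to pass from $\alpha|_E$ to a section of $\mathscr O_E^\star$ you must divide off a lift to $M_S^{\rm gp}$ of the constant value of $\overnorm\alpha$ on $E$, which requires a further \'etale localization in $S$ (the paper makes this explicit). Also, your phrase ``a section of a coherent sheaf on $\overnorm\pi_\star(\ldots)$'' should read ``a section of the coherent sheaf $\overnorm\pi_\star(\ldots)$''; and you should note that, on a stratum with constant combinatorial type, the formation of $\overnorm C$ and $\tau$ commutes with base change so that vanishing of this section indeed detects the fiberwise factorization property.
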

\begin{proof}
We wish to show that the locus of points in $S$ where the section $\alpha$ of $M_C^{\rm gp}$ descends to $M_{\overnorm C}^{\rm gp}$ is constructible in $S$.

This assertion is local in the constructible topology on $S$, so we may assume that the dual graph of $C$ is locally constant over $S$.  The assertion is also local in the \'etale topology, so we can even assume the dual graph is constant.  Then there are two obstructions to descending $\alpha$ to a section of $M_{\overnorm C}^{\rm gp}$.  Since $M_{\overnorm C}^{\rm gp}$ is pulled back from $S$ near $\tau(E)$, the first obstruction is that $\overnorm\alpha$ should be constant on $E$.  But $\pi_\star \overnorm M_{C}^{\rm gp}$ is constant over $S$, so this holds on an open and closed subscheme of $S$.  In particular, this locus is quasicompact and constructible.  We may now restrict attention to this locus and assume the first obstruction vanishes.

Now $\alpha$ is a section of $\pi^\star M_S^{\rm gp} \subset M_C^{\rm gp}$ near $E$.  Since $\overnorm\alpha$ is constant on $E$ by assumption, we may, after localization in $S$, divide off a section pulled back from $S$, to ensure that $\overnorm\alpha(E) = 0$ and $\alpha$ is therefore a section of $\mathcal O_C^\star$.  We must show that the locus in $S$ where $\alpha$ is pulled back from $\mathcal O_{\overnorm C}^\star$ is constructible.

Regarding $\alpha$ instead as a section of $\mathcal O_C$, it is equivalent to show that the locus where $\alpha$ is pulled back from $\mathcal O_{\overnorm C}$ is constructible.  The rest of the proof is now the same as in \cite[Theorem~4.3]{RSW17A}.
\end{proof}

\begin{proposition}  \label{prop:proper}
The factorization property satisfies the valuative criterion for properness.
\end{proposition}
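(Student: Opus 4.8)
The plan is to verify the valuative criterion directly. Let $R$ be a valuation ring with fraction field $\kappa$, and suppose given a map $\Spec R \to S$ such that the restriction $\alpha_\kappa$ of $\alpha$ to $\tilde C_\kappa$ satisfies the factorization property, i.e.\ $\alpha_\kappa$ descends along $\tilde C_\kappa \to \overnorm C_\kappa$; we must show $\alpha$ descends along $\tilde C_R \to \overnorm C_R$ over the whole of $\Spec R$. As in the proof of Proposition~\ref{prop:constructible}, the problem localizes: after an \'etale localization we may assume the dual graph of $C$ is constant over $\Spec R$, and there are two obstructions to descent. The first is that $\overnorm\alpha$ be constant on the exceptional locus $E$; since $\pi_\star \overnorm M_C^{\rm gp}$ is constant over $\Spec R$ (the generic and special fibers have the same dual graph), the condition $\overnorm\alpha|_E = 0$ holds on the generic fiber if and only if it holds over all of $\Spec R$, so this obstruction is handled for free. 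After this reduction, dividing off a section pulled back from $S$ so that $\overnorm\alpha(E) = 0$, we may regard $\alpha$ as a section of $\mathscr O_C^\star \subset \mathscr O_C$ near $E$, and the question becomes whether the function $\alpha$ on $C_R$ is pulled back from $\overnorm C_R$.

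For that final step I would argue as follows. The map $\tau : \tilde C_R \to \overnorm C_R$ is a contraction of the genus~$1$ component; away from $\tau(E)$ it is an isomorphism, so descent of $\alpha$ is automatic there, and the only issue is at the elliptic singularity $p = \tau(E)$. Writing $\overnorm C_R = \Spec \overnorm{\mathscr A}$ and $\tilde C_R = \Spec \mathscr A$ locally near $p$ and $E$ respectively, descent of $\alpha$ means $\alpha \in \overnorm{\mathscr A} \subset \mathscr A$. Over the generic point this holds by hypothesis; I want to upgrade this to $R$-integrally. Since $\overnorm C_R \to \Spec R$ is flat with Gorenstein genus~$1$ fibers (Theorem of Section~\ref{sec: prelims-from-prequel}), $\overnorm{\mathscr A}$ is a flat $R$-module, hence $R$-torsion free, and is cut out inside $\mathscr A$ (or inside the total ring of functions on $\tilde C_R$) by the seminormalization-type conditions recalled in Section~\ref{sec: genus-1-singularities}: a function on $\tilde C_R$ descends past the contraction precisely when it is constant on $E$ \emph{and} its first-order expansions on the branches meeting $p$ satisfy the single linear ``residue'' relation $\sum b_i c_i = 0$ imposed by the dualizing differential~\eqref{eqn:omega-E} (with $c' = 0$). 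Both are closed conditions defined by equations with coefficients in $R$; since $\alpha$ already satisfies them generically and $\mathscr A$ (equivalently the relevant module of expansions) is $R$-torsion free, the relations hold over all of $\Spec R$. Hence $\alpha$ descends over $R$, establishing the valuative criterion.

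The main obstacle I anticipate is the interaction between the contraction $\tau$ and the base change to $R$: the semistable model $\tilde C \to C$ and the contraction $\tilde C \to \overnorm C$ were constructed over $S$, and one must make sure their formation is compatible with the base change $\Spec R \to S$ (and with the \'etale/constructible localizations used to make the dual graph constant), so that ``$\overnorm C_R$'' really is the contraction of ``$\tilde C_R$'' and the fiberwise Gorenstein description applies uniformly. Granting that — which follows from the construction in \cite{RSW17A} being stable under base change, exactly as invoked in Proposition~\ref{prop:constructible} — the remaining work is the linear-algebra bookkeeping of the descent conditions, which is the routine part and runs parallel to the argument of \cite[Theorem~4.3]{RSW17A}.
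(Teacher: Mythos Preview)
Your overall strategy matches the paper's: split the descent of $\alpha$ into the characteristic monoid piece (descend $\overnorm\alpha$) and the unit piece (descend a section of $\mathscr O^\star$), reduce the latter to descent in $\mathscr O$, and then invoke the argument of \cite[Theorem~4.3]{RSW17A}.  Your treatment of the $\mathscr O$-part via torsion-freeness and the residue relation is essentially a sketch of that reference, so that step is fine.

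The gap is in your handling of the characteristic monoid piece.  You assert that ``after an \'etale localization we may assume the dual graph of $C$ is constant over $\Spec R$,'' and from this deduce that $\pi_\star \overnorm M_C^{\rm gp}$ is constant.  But no \'etale localization on $\Spec R$ will make the dual graphs of the generic and special fibres of $C_R$ coincide when they differ to begin with: nodes can genuinely smooth as one passes from the special to the generic point, and an \'etale neighbourhood of the closed point still sees the generic point.  The analogous reduction in Proposition~\ref{prop:constructible} uses localization in the \emph{constructible} topology, which is unavailable here precisely because it would decouple the two points of $\Spec R$ and defeat the purpose of the valuative criterion.

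The paper circumvents this by first replacing the logarithmic structure on the base by the maximal one extending that of the generic point $\eta$ (so that $M_S^{\rm gp} = j_\star M_\eta^{\rm gp}$), and then recasting the descent of $\alpha$ as the cartesianness of a square of sheaves built from $j_\star$ and $\tau_\star$.  For the characteristic monoid part of that square one applies proper base change for \'etale sheaves to reduce to a fibrewise check, where it is immediate because $\pi^\star \overnorm M_S^{\rm gp}$ is constant on each fibre and $\tau$ has connected fibres; no constancy of the dual graph over $\Spec R$ is required.  Once you replace your localization step by this argument, the rest of your proof lines up with the paper's.
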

\begin{proof}
We assume that $S$ is the spectrum of a valuation ring, with $j : \eta \to S$ the inclusion of the generic point.  We wish to show that if $\alpha$ is a section of $M_C^{\rm gp}$ such that $j^\star \alpha$ is pulled back from a section of $M_{\overnorm C_\eta}$, then $\alpha$ is pulled back from a section of $M_{\overnorm C}$ over $\overnorm C$.

It suffices to assume that $S$ has the maximal logarithmic structure extending the logarithmic structure over $\eta$.  That is, $M_S = \mathcal O_S \mathop\times_{j_\star \mathcal O_\eta} j_\star M_\eta$.  This implies that $M_S^{\rm gp} = j_\star M_\eta^{\rm gp}$.

Our task is equivalent to showing that the square~\eqref{eqn:2} is cartesian:
\begin{equation} \label{eqn:2} \vcenter{\xymatrix{
M_{\overnorm C}^{\rm gp} \ar[r] \ar[d] & j_\star M_{\overnorm C_\eta}^{\rm gp} \ar[d] \\
\tau_\star M_C^{\rm gp} \ar[r] & j_\star \tau_\star M_{C_\eta}^{\rm gp}
}} \end{equation}

Away from $\tau(E)$, we know that $\varphi$ is an isomorphism, so it suffices to demonstrate the bijectivity of $\varphi$ near $\tau(E)$, where $M_{\overnorm C} = \overnorm\pi^\star M_S$.  Since the map $M_{\overnorm C}^{\rm gp} = \overnorm\pi^\star M_S^{\rm gp} \to \tau_\star M_C^{\rm gp}$ factors through $\tau_\star \pi^\star M_S^{\rm gp}$, it suffices to show that~\eqref{eqn:3} is cartesian:
\begin{equation} \label{eqn:3} \vcenter{\xymatrix{
\overnorm\pi^\star M_S^{\rm gp} \ar[r] \ar[d] & j_\star \overnorm\pi^\star M_S^{\rm gp} \ar[d] \\
\tau_\star \pi^\star M_S^{\rm gp} \ar[r] & j_\star \tau_\star \pi^\star M_S^{\rm gp}
}} \end{equation}
This reduces to showing that both of the following two squares are cartesian:
\begin{equation} \label{eqn:4} \vcenter{\hbox{\xymatrix{
\overnorm\pi^\star \overnorm M_S^{\rm gp} \ar[r] \ar[d] & j_\star \overnorm\pi^\star \overnorm M_S^{\rm gp} \ar[d] \\
\tau_\star \pi^\star \overnorm M_S^{\rm gp} \ar[r] & j_\star \tau_\star \pi^\star \overnorm M_S^{\rm gp}
} \qquad\qquad  \xymatrix{
\mathcal O_{\overnorm C}^\star \ar[r] \ar[d] & j_\star \mathcal O_{\overnorm C_\eta}^\star \ar[d] \\
\tau_\star \mathcal O_C^\star \ar[r] & j_\star \tau_\star \mathcal O_{C_\eta}^\star
}}} \end{equation}
We can check that the first square is cartesian on fibers, by proper base change for \'etale pushforward.  In that situation it is immediate, because $\pi^\star \overnorm M_S^{\rm gp}$ is constant on the fibers over $S$ and the fibers of $\tau$ are connected.  To see that the second square is cartesian, it is sufficient to see that~\eqref{eqn:5} is cartesian:
\begin{equation} \label{eqn:5} \vcenter{\xymatrix{
\mathcal O_{\overnorm C} \ar[r] \ar[d] & j_\star \mathcal O_{\overnorm C_\eta} \ar[d] \\
\tau_\star \mathcal O_C \ar[r] & j_\star \tau_\star \mathcal O_{C_\eta}
}} \end{equation}
The rest of the proof is exactly the same as the end of the proof of \cite[Theorem~4.3]{RSW17A}.
\end{proof}

Propositions~\ref{prop:constructible} and~\ref{prop:proper} combine to imply that the locus in $S$ where $\alpha \in M_C^{\rm gp}$ satisfies the factorization property is a closed subset of $S$.  To give that closed subset a scheme structure, we have two more propositions, the first of which requires the notion of a homogeneous functor, cf.\ \cite[Definition~2.5]{Rim}, \cite[Section~2]{obs}:

\begin{definition}
Let $F$ be a category fibered in groupoids over schemes.  We say $F$ is homogeneous if the map
\begin{equation*}
F(S') \to F(S) \mathop\times_{F(T)} F(T')
\end{equation*}
is an equivalence whenever $S' = S \mathop\amalg_T T'$ is the pushout of an infinitesimal extension $T \subset T'$ and an affine morphism $T \to S$.
\end{definition}

\begin{proposition} \label{prop:homogeneous}
The factorization property is homogeneous.
\end{proposition}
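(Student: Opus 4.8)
The plan is to check the defining equivalence by hand, exploiting two facts: that the destabilization $\upsilon\colon\widetilde C\to C$, the contraction $\tau\colon\widetilde C\to\overnorm C$, and the logarithmic structures all commute with base change, and that the factorization property is at bottom the condition of lying in a subsheaf that is defined componentwise. As in the running reduction we take $\widetilde C = C$, so the relevant data is the contraction $\tau\colon C\to\overnorm C$ with exceptional locus $E$, and the section $\alpha$ of $M_C^{\rm gp}$. Write the pushout as $S' = S\amalg_T T'$, with $T\hookrightarrow T'$ a nilpotent closed immersion and $T\to S$ affine. Since $F$ is a subfunctor of the representable functor $h_S$, and $h_S$ carries this pushout to the corresponding pullback by the universal property of pushouts, the map $F(S')\to F(S)\times_{F(T)}F(T')$ is automatically a monomorphism; so the only thing to prove is surjectivity: a morphism $S'\to S$ whose restrictions along $S\to S'$ and along $T'\to S'$ both satisfy the factorization property must itself satisfy it over $S'$. (That $F$ is a well-defined subfunctor relies precisely on the base-change compatibility of the radially aligned curve, the radius $\delta_\alpha$, the contraction $\tau$, and the log structures, as recalled in Section~\ref{sec: prelims-from-prequel} and underlying Proposition~\ref{prop:W-alg}.)

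First I would record the pushout behaviour of the constituent objects. Pulling $C$, $\overnorm C$, $\tau$ and $\alpha$ back along the four arrows of the pushout square to $S$ — write the pullbacks with subscripts $S$, $T$, $T'$, $S'$ — the flatness of $C\to S$ and the fact that $S' = S\amalg_T T'$ is a Ferrand-type pushout (an affine morphism glued along a nilpotent thickening) give $C_{S'} = C_S\amalg_{C_T}C_{T'}$ and $\overnorm C_{S'} = \overnorm C_S\amalg_{\overnorm C_T}\overnorm C_{T'}$, with $\tau_{S'}$ the induced morphism. On the underlying space of $\overnorm C_{S'}$, which is the topological pushout since $\overnorm C_T\to\overnorm C_{T'}$ is a closed immersion, the sheaves $\mathcal O$, $M^{\rm gp}$, $\overnorm M^{\rm gp}$, and also $\tau_{S'\star}M^{\rm gp}_{C_{S'}}$, are then the fibre products of the corresponding sheaves over $\overnorm C_S$ and $\overnorm C_{T'}$ along those over $\overnorm C_T$ — for the pushforward one uses that $\overnorm\pi^\star$ and pushforward are right adjoints, together with proper base change for \'etale pushforward exactly as in the proof of Proposition~\ref{prop:proper}, noting that $M^{\rm gp}_{\overnorm C}$ is pulled back from the base near $\tau(E)$ while $\tau$ is an isomorphism away from $\tau(E)$. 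Finally, $M^{\rm gp}_{\overnorm C}$ sits inside $\tau_\star M^{\rm gp}_C$ as a genuine subsheaf (near $\tau(E)$ this is the inclusion $\mathcal O_{\overnorm C}^\star\subset\tau_\star\mathcal O_C^\star$ twisted by a constant characteristic sheaf from the base), so ``$\beta$ descends along $\tau$'' is literally the statement that $\beta$ lies in the image of $M^{\rm gp}_{\overnorm C}\hookrightarrow\tau_\star M^{\rm gp}_C$.

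Granting this, the conclusion is formal. Given a matching pair over $S$ and $T'$, the induced morphism $\chi\colon S'\to S$ pulls $\alpha$ back to a section of $M^{\rm gp}_{C_{S'}}$ whose image in $\tau_{S'\star}M^{\rm gp}_{C_{S'}}$ is, under the fibre-product description above, the pair formed by the images of $\alpha$ over $C_S$ and over $C_{T'}$ (agreeing over $C_T$ by hypothesis). This pair lies in the subsheaf $M^{\rm gp}_{\overnorm C_{S'}} = M^{\rm gp}_{\overnorm C_S}\times_{M^{\rm gp}_{\overnorm C_T}}M^{\rm gp}_{\overnorm C_{T'}}$ if and only if each component lies in the corresponding subsheaf, which is exactly the factorization property over $S$ and over $T'$. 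Hence $\chi^\star\alpha$ descends along $\tau_{S'}$, i.e.\ $\chi\in F(S')$, and surjectivity follows.

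The one genuinely technical point — and the main obstacle — is the bookkeeping of the second paragraph: confirming that $\overnorm C$, $\tau$, and the logarithmic structures really do convert the Ferrand pushout $S' = S\amalg_T T'$ into the analogous pushout of logarithmic curves, and the relevant sheaves into fibre products. This is essentially the homogeneity of the ambient moduli problems of Proposition~\ref{prop:W-alg}, a property shared by all algebraic stacks; once it is in place, the factorization condition is detected componentwise and the homogeneity of $F$ is immediate. The argument is insensitive to whether one regards $F$ as a functor on schemes or on logarithmic schemes, since in either case the pushout and the associated sheaf identifications are the same.
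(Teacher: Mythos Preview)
Your proof is correct and follows essentially the same route as the paper's. Both arguments reduce homogeneity to the fibre-product decomposition $M_{S'}^{\rm gp} = M_S^{\rm gp} \times_{M_T^{\rm gp}} M_{T'}^{\rm gp}$ (and its curve-level analogue), which in turn rests on the defining identity $\mathcal O_{S'} = \mathcal O_S \times_{\mathcal O_T} \mathcal O_{T'}$ together with the fact that the characteristic monoids are unchanged by the strict infinitesimal extension $T\subset T'$; you spell out the curve-level bookkeeping that the paper leaves implicit, but the substance is the same.
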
 
\begin{proof}
Let $\tau : C' \to \overnorm C'$ be a contraction of genus~$1$ components over $S'$, where $S = T' \amalg_{T} S$ for a strict infinitesimal extension $T \subset T'$ and a strict affine morphism $\rho : T \to S$.  We must show that if $\alpha' \in \Gamma(C', M_{C'}^{\rm gp})$ and its restrictions $\beta'$ to $T'$ and $\alpha$ to $S$ satisfy the factorization property, then so does $\alpha'$.  This is in fact immediate in view of~\eqref{eqn:6}:
\begin{equation} \label{eqn:6}
M_{S'}^{\rm gp} = \rho_\star M_{T'}^{\rm gp} \mathop\times_{\rho_\star M_T^{\rm gp}} M_S^{\rm gp}
\end{equation}
Indeed, this assertion holds trivially on characteristic monoids, since $\overnorm M_{T'}^{\rm gp} \to \overnorm M_T^{\rm gp}$ is an isomorphism, so it comes down to the identification~\eqref{eqn:7},
\begin{equation} \label{eqn:7}
\mathcal O_{S'} = \rho_\star \mathcal O_{T'} \mathop\times_{\rho_\star \mathcal O_T} \mathcal O_S
\end{equation}
which is the definition of $S'$.
\end{proof}

\begin{proposition} \label{prop:integration}
The factorization property holds over a complete noetherian local ring if and only if it holds formally.
\end{proposition}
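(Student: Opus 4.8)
The ``only if'' direction is immediate: the factorization property is stable under base change, in particular along each $\Spec(R/\mathfrak{m}^{n+1}) \to \Spec R$. For the converse I would reuse the reductions from the proof of Proposition~\ref{prop:constructible}. Write $S = \Spec R$ with maximal ideal $\mathfrak{m}$, set $S_n = \Spec(R/\mathfrak{m}^{n+1})$, and suppose $\alpha \in \Gamma(C, M_C^{\rm gp})$ satisfies the factorization property over every $S_n$. Working \'etale-locally on $S$ so that the dual graph of $C$ is constant, the factorization property splits into two conditions: first, that $\overnorm\alpha$ be constant on the exceptional locus $E$ of $\tau : \widetilde C \to \overnorm C$; and second --- after dividing $\alpha$ by a section pulled back from $S$, so that $\overnorm\alpha$ vanishes on $E$ and $\alpha$ becomes a section of $\mathcal O_C^\star$ near $E$ --- that $\alpha$, viewed as a section of $\mathcal O_C$, lie in the image of $\mathcal O_{\overnorm C} \hookrightarrow \tau_\star\mathcal O_C$. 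The first condition is read off from a section of the constant sheaf $\pi_\star \overnorm M_C^{\rm gp}$ on $S$, so it holds on an open and closed subscheme; since $\Spec R$ is connected and the hypothesis over $S_0$ puts the closed point in that subscheme, it holds over all of $S$. So the work is entirely in the second condition.

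Let $\mathcal Q = \tau_\star\mathcal O_C / \mathcal O_{\overnorm C}$, a coherent $\mathcal O_{\overnorm C}$-module supported on the locus of genus~$1$ singularities of $\overnorm C$, which is a closed subscheme of $\overnorm C$ finite over $S$. The second condition is exactly that the image $\overnorm\alpha$ of $\alpha$ in $\Gamma(\overnorm C, \mathcal Q)$ vanish. Because $\overnorm C$ is proper over $R$, the module $\Gamma(\overnorm C, \mathcal Q)$ is finite over $R$; as $R$ is noetherian and complete, it is $\mathfrak{m}$-adically separated, and the theorem on formal functions identifies it with $\varprojlim_n \Gamma\bigl(\overnorm C_n, \mathcal Q \otimes_R R/\mathfrak{m}^{n+1}\bigr)$, where $\overnorm C_n = \overnorm C \times_S S_n$. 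Hence $\overnorm\alpha = 0$ as soon as its image vanishes in each of these finite-level groups. On the other hand, the hypothesis that the factorization property holds over $S_n$ says precisely that the image of $\alpha$ in the cokernel $\mathcal Q_n = (\tau_n)_\star\mathcal O_{C_n} / \mathcal O_{\overnorm C_n}$ vanishes, so the remaining point is to compare $\mathcal Q_n$ with $\mathcal Q \otimes_R R/\mathfrak{m}^{n+1}$.

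This comparison is where the explicit geometry of the genus~$1$ contraction is needed, and I expect it to be the one place that requires care. A priori, forming $\tau_\star\mathcal O_C$ could fail to commute with the non-flat base change $S_n \to S$, the obstruction lying in $R^1\tau_\star\mathcal O_C$ (nonzero already on the closed fiber, where it contributes the ``extra'' genus). However, the explicit local model of the contraction recalled in Section~\ref{sec: prelims-from-prequel}, together with the classification of Gorenstein genus~$1$ singularities and their partial normalizations in Section~\ref{sec: genus-1-singularities}, identifies $\tau_\star\mathcal O_C$ with the structure sheaf of the partial normalization of $\overnorm C$ separating the branches at the elliptic point; this is a family of (genus~$0$) prestable curves, hence flat over $S$. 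Cohomology and base change therefore does apply to $\tau_\star\mathcal O_C$, giving a natural isomorphism $\mathcal Q \otimes_R R/\mathfrak{m}^{n+1} \cong \mathcal Q_n$ compatible with the images of $\alpha$. With this identification the hypothesis yields the vanishing of $\overnorm\alpha$ in each $\Gamma(\overnorm C_n, \mathcal Q_n)$, hence in $\varprojlim_n \Gamma(\overnorm C_n, \mathcal Q_n) = \Gamma(\overnorm C, \mathcal Q)$, which is the factorization property over $S$. (One could instead run the entire argument in terms of the $\mathcal O_S$-flat sheaves $\mathcal O_C$ and $\mathcal O_{\overnorm C}$, exactly as at the end of the proof of \cite[Theorem~4.3]{RSW17A}.) The limiting step itself is a routine application of the theorem on formal functions; the only subtlety is the base-change identification just described.
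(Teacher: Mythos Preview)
Your argument tracks the paper's closely: the reduction on $\overnorm\alpha$ (constancy on $E$ passes from $S_0$ to all of $S$ because $\overnorm M_C$ is an \'etale sheaf), the passage to a section of $\mathcal O_C^\star$ near $E$, and the appeal to the theorem on formal functions are all exactly as in the text. The paper carries out the last step directly on the inclusion $\mathcal O_{\overnorm U} \hookrightarrow \tau_\star\mathcal O_U$ --- precisely your parenthetical alternative --- rather than on the quotient $\mathcal Q$, and so never needs the base-change comparison $\mathcal Q \otimes_R R/\mathfrak m^{n+1} \cong \mathcal Q_n$ that you flag.

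That comparison is indeed the right place to be careful, but your justification conflates two different statements: flatness of $\tau_\star\mathcal O_C$ over $S$ (which the seminormalization picture would give) is not the hypothesis of cohomology-and-base-change for $\tau_\star$; the latter is governed by local freeness of $R^1\tau_\star\mathcal O_C$, and that sheaf can jump in rank where $\delta$ degenerates to $0$. The paper's direct route, working with the $S$-flat sheaves $\mathcal O_C$ and $\mathcal O_{\overnorm C}$ and applying formal functions to $\tau$, sidesteps this entirely --- which is presumably why you mention it as the cleaner alternative.
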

\begin{proof}
Here we must show that if $S$ is the spectrum of a complete noetherian local ring with maximal ideal $m$ and $S_i$ is the vanishing locus of $m^{i+1}$ then $\alpha \in \Gamma(C, M_S^{\rm gp})$ satisfies the factorization property if and only if its restriction to $S_i$ satisfies the factorization property for every $i$.  It is certainly the case that if $\overnorm\alpha \in \Gamma(C, \overnorm M_S^{\rm gp})$ is pulled back from $\Gamma(\overnorm C, \overnorm M_{\overnorm C}^{\rm gp})$ modulo every (in fact, \emph{any}) power of $m$ then so does $\overnorm\alpha$.  Indeed, this claim amounts to the assertion that if $\overnorm\alpha$ is constant on $E \cap \pi^{-1}(S_0)$ then $\overnorm\alpha$ is constant on $E$.  But $\overnorm M_C$ is an \'etale sheaf on $C$, so if $\overnorm\alpha$ vanishes at a point then it vanishes in an open neighborhood of that point.  Since the only open subset of $E$ containing $E \cap \pi^{-1}(S_0)$ is $E$ itself, we conclude that $\overnorm\alpha$ descends to $\overnorm C$ if $\overnorm\alpha \big|_{S_0}$ descends to $\overnorm\pi^{-1}(S_0)$.

Dividing $\alpha$ by a section of $M_S^{\rm gp}$, we can assume that $\overnorm\alpha = 0$ on $E$ and therefore that $\alpha$ is a section of $\mathcal O_C^\ast$ near $E$.  We can find an open subsets $\overnorm U \subset \overnorm C$ covering $\tau(E)$ such that $\overnorm\alpha$ vanishes on $U = \tau^{-1} \overnorm U$.  Then, by assumption, we we have $\tau_\star \alpha \big|_{\overnorm U} \in \mathcal O_{\overnorm U} / m^{i+1} \mathcal O_{\overnorm U} \subset \tau_\star(\mathcal O_U / m^{i+1} \mathcal O_U)$.  Passing to the limit, it follows that the image of $\tau_\star \alpha$ along $\tau_\star \mathcal O_U \to \varprojlim \tau_\star (\mathcal O_U / m^{i+1} \mathcal O_U)$ lies in $\varprojlim \mathcal O_{\overnorm U} / m^{i+1} \mathcal O_{\overnorm U} = \mathcal O_{\overnorm U}$.  But the theorem on formal functions guarantees that $\tau_\star \mathcal O_U \to \varprojlim \tau_\star(\mathcal O_U / m^{i+1} \mathcal O_U)$ is an isomorphism, so $\alpha$ has the factorization property, as required.
\end{proof}

As we will detail below, the propositions proved so far show that $F$ is representable by a closed subscheme of $S$ when restricted to the category of \emph{strict} logarithmic schemes over $S$.  To show that it is representable on the category of all logarithmic schemes over $S$, we require one more proposition.

\begin{proposition} \label{prop:minimal}
The factorization property has minimal monoids and the pullback of a minimal monoid is a minimal monoid.
\end{proposition}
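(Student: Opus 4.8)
The statement to prove is Proposition~\ref{prop:minimal}: the factorization property has minimal monoids, and these are preserved by pullback. Recall the setup: $C$ is a radially aligned logarithmic curve over $S$, $\alpha \in \Gamma(C, M_C^{\rm gp})$ a section, $\delta_\alpha \in \Gamma(S, \overnorm M_S)$ the resulting radius, and $\tau : \widetilde C \to \overnorm C$ the associated contraction. We want to show that the subfunctor $F \subset S$ cut out by the factorization property, regarded now as a functor on \emph{all} (not merely strict) logarithmic schemes over $S$, admits minimal objects in the sense of Section~\ref{sec:minimality}, and that the formation of the minimal monoid commutes with base change. Combined with the earlier propositions (Propositions~\ref{prop:constructible}--\ref{prop:integration}), which show $F$ is representable by a closed subscheme on \emph{strict} logarithmic schemes over $S$, Gillam's theorem (Theorem~\ref{thm:gillam}) will then upgrade this to representability by a (not necessarily strict) closed embedding, proving Theorem~\ref{thm:fact-rep}.

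The plan is to give a direct tropical description of the minimal monoid, as is done throughout the paper. First I would reduce, exactly as in the proofs of Propositions~\ref{prop:constructible} and~\ref{prop:integration}, to the case where the dual graph of $C$ is constant and the first obstruction to factorization (constancy of $\overnorm\alpha$ on the contracted locus $E$) has already been imposed; this first obstruction is already represented by an open-and-closed, hence strict, condition and contributes nothing to the monoid. So I may assume $\overnorm\alpha$ is constant on $E$ and, after dividing by a section pulled back from $S$, that $\overnorm\alpha = 0$ on $E$, so that $\alpha$ is a section of $\mathcal O_C^\star$ near $E$. At this point the only remaining content is whether $\alpha$, as a unit near $E$, descends along $\tau$ — equivalently, following the computation that produced~\eqref{eqn:5}, whether the corresponding function in $\mathcal O_C$ is pulled back from $\overnorm C$. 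That condition is strict (it is a condition on $\mathcal O_S$, not on $\overnorm M_S$) and so, on strict schemes, is already cut out by the closed subscheme of Proposition~\ref{prop:integration}.

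The subtle point is that $\delta_\alpha$, and hence the destabilization $\widetilde C \to C$ and the contraction $\overnorm C$, depend on $\overnorm\alpha$ only through the combinatorial type of the tropical map, but the \emph{definition} of $\delta_\alpha$ as ``the largest $\lambda(v)$ such that $\overnorm\alpha$ is constant on the sub-curve at radius $\le \lambda(v)$'' involves a comparison in $\overnorm M_S$ that need not hold in an arbitrary monoid refining $C$. This is precisely where minimality enters: over a non-minimal base, the element $\delta_\alpha$ is forced to be comparable to all the radii $\lambda(v)$, and the minimal monoid for $F$ is obtained from the minimal monoid of $\mathfrak W(Z)$ (the tropical moduli cone $\sigma(f)$ of Section~\ref{sec:factorization}) by imposing exactly those comparabilities and equalities needed to pin down $\delta_\alpha$ and to force $\overnorm\alpha$ constant on $E$ — i.e.\ by passing to the appropriate face, together with an equality of two monoid elements where the condition $\overnorm\alpha|_E = 0$ is imposed. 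Concretely: the minimal monoid $\overnorm M_{S,\overnorm s}^F$ is the quotient of $\overnorm M_{S,\overnorm s}$ (the minimal monoid of the underlying radial logarithmic map) by the congruence generated by the relation that makes $\overnorm\alpha$ constant along the edges of $E$, localized at the face where $\delta_\alpha$ is defined; dualizing, its cone is the sub-cone of $\sigma(f)$ on which $\overnorm\alpha$ is constant on $E$. Since $\overnorm\alpha$ is a section of the constant sheaf $\pi_\star \overnorm M_C^{\rm gp}$ (constant once the dual graph is fixed), the relations are given by finitely many explicit linear equations among the edge-length coordinates, so this is a genuine finitely generated monoid, and the construction is manifestly compatible with the monoidal pullback along any map of logarithmic schemes over $S$ — a map $T \to S$ over $S$ factors through $F$ if and only if the pulled-back relations hold, which is exactly the universal property of the quotient monoid.

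The main obstacle, and the step requiring the most care, is verifying that this candidate monoid is genuinely \emph{minimal} — that is, final among all logarithmic enhancements of a given underlying $F$-object — rather than merely \emph{a} monoid through which everything factors. Concretely one must check: (i) that over the candidate minimal base the section $\alpha$ genuinely satisfies the factorization property (so the candidate object lies in $F$), which follows because over that base $\delta_\alpha$ is well-defined and $\overnorm\alpha$ is constant on $E$ by construction, and then the remaining descent of the \emph{unit} $\alpha$ along $\tau$ is a strict condition already handled by Proposition~\ref{prop:integration}, so imposes no further monoid relations; and (ii) that any $F$-object over an arbitrary base $S'$ with underlying scheme fixed receives a \emph{unique} map from the candidate, which amounts to the statement that the monoid relations defining the candidate are exactly those forced by membership in $F$ — no more, no less. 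Point (ii) is where one must be careful that we have not over-imposed: the only relations we add beyond those of $\sigma(f)$ are the constancy-of-$\overnorm\alpha$-on-$E$ relations, and the localization to the face where $\delta_\alpha$ exists; one must check the latter is automatic from the former together with the radial alignment (so that it is not an extra condition that could fail to be representable by a \emph{monoid} quotient, e.g.\ if it required inverting rather than quotienting). This is plausible because, once $\overnorm\alpha$ is constant on $E$, the radius $\delta_\alpha \ge \lambda(v)$ for every $v \in E$, and radial alignment guarantees all such $\lambda(v)$ are already comparable in $\overnorm M_S$, so no localization is needed — the face is cut out by the constancy relations themselves. Once this is in hand, compatibility with pullback of minimal monoids is the formal observation that monoid quotients by explicitly-given congruences commute with the relevant base change, exactly as the analogous verification for $\mathfrak W(Z)$ in Section~\ref{sec:factorization}.
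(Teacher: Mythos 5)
Your proposal is correct and follows essentially the same route as the paper's proof. You both reduce to a constant dual graph, normalize $\overnorm\alpha$ to vanish on the contracted locus, and identify the minimal characteristic monoid as the (saturation of the) quotient of $\overnorm M_S$ by the relations forcing $\overnorm\alpha$ constant on the contracted subcurve — the paper phrases this as the quotient of $\overnorm M_S^{\rm gp}$ by the subgroup generated by the values $\overnorm\alpha(v)$ for contracted vertices $v$, which is the same subgroup as the one generated by your constancy-along-edges relations — and then observe that pullback compatibility is immediate because the relations are given by explicit elements determined fiberwise by the combinatorial data. The one point you treat at greater length than the paper, the worry about whether pinning down $\delta_\alpha$ requires a localization rather than a quotient, is resolved correctly: radial alignment already makes the relevant radii comparable, so once the constancy relations are imposed no further monoid operations are needed.
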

\begin{proof}
Suppose that $\alpha$ satisfies the factorization property over $S_0$, where $S_0 \to S$ is a morphism of logarithmic schemes that is an isomorphism on the underlying schemes.  Write $C_0 = C \mathop\times_S S_0$ and let $\alpha_0$ be the image of $\alpha$ in $M_{C_0}^{\rm gp}$, which satisfies the factorization property by assumption.  Working locally, we can assume that $S$ is atomic and that the dual graph of $C$ is constant on the closed stratum.  Let $\plC$ be that dual graph.  Adjusting $\alpha$ by a section of $M_S^{\rm gp}$, we can assume that $\overnorm\alpha_0$ vanishes on the components contracted by $\tau$.  Let $Q$ be the set of all elements of $\overnorm M_S^{\rm gp}$ that arise as $\overnorm\alpha(v)$ as $v$ ranges over the vertices of $\plC$ contracted by $\tau$.  Then $Q$ is contained in the kernel of $\overnorm M_S^{\rm gp} \to \overnorm M_{S_0}^{\rm gp}$.  The minimal characteristic monoid on which we have the factorization property is therefore the saturation $\overnorm N_S$ of the image of $\overnorm M_S$ in the quotient of $\overnorm M_S^{\rm gp}$ by the subgroup generated by $Q$.  Since we have a map $\overnorm N_S \to \overnorm M_{S_0}$, we can pull back the logarithmic structure of $S_0$ to get a logarithmic structure $N_S$ on $S$ on which $\alpha$ has the factorization property.  It is immediate from the construction that $\overnorm N_S$ is minimal and that the construction commutes with pullback.
\end{proof}

\begin{proof}[Proof of Theorem~\ref{thm:fact-rep}]
Let $\tau : C \to \overnorm C$ be a contraction of genus~$1$ components over $S$ and let $\alpha$ be a section of $M_C^{\rm gp}$.  We wish to show that there is a closed subscheme $S' \subset S$ such that the pullback of $\alpha$ along $f : T \to S$ has the factorization property if and only if $f$ factors through $S'$.  We may assume that $S$ is of finite type, since all of the data in play are locally of finite presentation.

By Proposition~\ref{prop:constructible} and Proposition~\ref{prop:proper}, the subset of points of $S$ where $\alpha$ satisfies the factorization property is a closed subset $S_0$ of $S$.  If $S_0$ is given the reduced subscheme structure then $\alpha$ has the factorization property over $S_0$.  By Proposition~\ref{prop:homogeneous}, the subscheme structures on $S_0$ over which $\alpha$ has the factorization property are filtered.  Taking the limit of these closed subschemes yields a scheme $S'$, the spectrum of a complete noetherian local ring, such that the factorization property holds formally over $S'$.  But the underlying scheme of $S'$ must be the same as that of $S_0$, so the ideal of $S_0$ in $S'$ is nilpotent.  Thus $S'$ is actually an infinitesimal neighborhood of $S_0$ in $S$ and $S'$ is the maximal closed subscheme of $S$ over which the factorization property holds.

Now suppose that $f : T \to S$ is a strict morphism and that $\alpha$ has the factorization property over $T$.  We wish to show $f$ factors through $S'$.  We may assume $T$ is of finite type.  Certainly $f(T) \subset S_0$ as a set, so if $T_0$ is the reduced subscheme structure on $T$ the $f \big|_{T_0}$ factors through $S_0$.  Then $T$ is an infinitesimal extension of $T_0$ so the pushout $S_1 = T \amalg_{T_0} S_0$ is an infinitesimal extension of $S_0$ and $\alpha$ has the factorization property over $S_1$ by Proposition~\ref{prop:integration}.  It follows that $S_1 \subset S'$ and therefore $f$ factors through $S'$ as required.

By Proposition~\ref{prop:minimal} the factorization property has minimal monoids, so by Theorem~\ref{thm:gillam}, $S'$ with its minimal logarithmic structure represents the factorization property.
\end{proof}

\begin{theorem} \label{thm:w-rep-proper}
The category $\mathcal W(Z)$ is representable by a logarithmic algebraic stack.  After fixing the contact orders $\Gamma$, the substack $\mathcal W_\Gamma(Z)$ of maps with those contact orders is proper.
\end{theorem}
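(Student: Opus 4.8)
The plan is to realize $\mathcal W(Z)$ as a closed substack of the stable locus of the algebraic stack $\mathfrak W(Z)$ of Proposition~\ref{prop:W-alg}, and then to deduce properness of $\mathcal W_\Gamma(Z)$ by feeding the valuative criterion into the properness of the Abramovich--Chen--Gross--Siebert space $\mathscr L_\Gamma(Z)$, the properness of the radial blowup $\fM_{1,n}^{\mathrm{rad}}\to\fM_{1,n}$, and the closedness of the factorization property already established in Propositions~\ref{prop:constructible}--\ref{prop:proper} and Theorem~\ref{thm:fact-rep}.

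For representability, first cut out the open substack $\mathfrak W(Z)^{\mathrm{st}}\subset\mathfrak W(Z)$ on which the underlying schematic map is stable, by the usual stability-is-open argument. On $\mathfrak W(Z)^{\mathrm{st}}$ the well-spacedness condition is the conjunction, over all subtori $H\subseteq T$, of the factorization properties of Definition~\ref{def:fact-subtorus}, and after splitting each $N^\vee_{T/H}$ into rank-one summands it is the conjunction over all primitive $u\in N^\vee$ of the factorization property of Definition~\ref{def:factorization-no-toric} for $\alpha_u:=u\circ\alpha$. Each such condition is a closed substack carrying minimal logarithmic structures by Theorem~\ref{thm:fact-rep}, so it remains to see that on each quasicompact open the intersection is finite. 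Locally there are only finitely many combinatorial types of maps, and for a fixed type a vector $u$ contributes a nontrivial condition only when $\langle u,\varphi\rangle$ contracts the circuit, i.e. when $u$ is orthogonal to the affine span of $\varphi(\plC_0)$; among these the data $(\delta_{\alpha_u},\upsilon,\tau)$ is constant on each cone of a subdivision of $N^\vee_\RR$ refining the hyperplane fan of the edge directions of $\varphi$, and on each such cone the descent condition defining the factorization property is $\ZZ$-linear in $u$, hence determined by the finitely many generators of the monoid of lattice points of that cone. Thus locally only finitely many factorization conditions occur, the intersection is a closed substack with minimal monoids, and $\mathcal W(Z)$ is a logarithmic algebraic stack by Theorem~\ref{thm:gillam}.

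For properness of $\mathcal W_\Gamma(Z)$ we verify the valuative criterion over a discrete valuation ring $R$ with fraction field $K$, allowing finite extensions of $R$, which suffices because $\mathcal W_\Gamma(Z)$ is of finite type over $\CC$: fixing $\Gamma$ fixes the intersection numbers of the image curve with the toric boundary divisors of the proper variety $Z$, hence the curve class, so $\mathfrak W_\Gamma(Z)^{\mathrm{st}}$ is quasifinite over a finite-type stack of ordinary stable maps by Proposition~\ref{prop:W-alg}, and its closed substack $\mathcal W_\Gamma(Z)$ is of finite type. For existence of limits, take a stable, well-spaced, radially aligned logarithmic map over $\Spec K$; forgetting the alignment and well-spacedness yields a logarithmic stable map, which extends over $\Spec R$ after a finite extension of $R$ by properness of $\mathscr L_\Gamma(Z)$. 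The underlying prestable logarithmic curve defines a morphism $\Spec R\to\fM_{1,n}$ lifting over the generic point to $\fM_{1,n}^{\mathrm{rad}}$; since $\fM_{1,n}^{\mathrm{rad}}\to\fM_{1,n}$ is a logarithmic blowup, hence proper, and carries the same universal curve, a further finite extension of $R$ produces a lift over $\Spec R$, refining the base logarithmic structure without altering the curve or the map. The resulting radially aligned logarithmic map over $\Spec R$ is stable (its underlying schematic map being the stable limit provided by $\mathscr L_\Gamma(Z)$) and has the prescribed generic fibre, and applying Proposition~\ref{prop:proper} to each of the finitely many factorization conditions identified above shows its special fibre is again well-spaced; this is the required object of $\mathcal W_\Gamma(Z)(\Spec R)$. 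Uniqueness, and with it separatedness, follows the same template: two such limits have isomorphic images in the separated stack $\mathscr L_\Gamma(Z)$, and their radial alignments, being two lifts along the separated morphism $\fM_{1,n}^{\mathrm{rad}}\to\fM_{1,n}$ agreeing generically, coincide.

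The main obstacle I anticipate is not deformation-theoretic --- that input is absorbed into Theorem~\ref{thm:fact-rep} and Propositions~\ref{prop:constructible}--\ref{prop:minimal} --- but rather the twin bookkeeping tasks of (i) proving that well-spacedness is genuinely cut out, locally, by \emph{finitely many} factorization conditions with compatible minimal monoids, which requires the cone-decomposition and $\ZZ$-linearity argument sketched above to be made precise, and (ii) checking at each step of the valuative criterion that forgetting or restoring the radial alignment preserves stability in the sense of $\mathfrak W(Z)$ and is compatible with the minimal logarithmic structures coming from $\mathscr L_\Gamma(Z)$, from $\fM_{1,n}^{\mathrm{rad}}$, and from Proposition~\ref{prop:minimal}. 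Once these compatibilities are in place, both representability and properness are formal consequences of Theorem~\ref{thm:fact-rep} and the two properness inputs cited above.
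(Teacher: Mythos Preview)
Your argument is correct, but it is considerably more elaborate than what the paper does, and the extra work is largely avoidable.

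For representability, the paper simply notes that the factorization property is a closed condition (Theorem~\ref{thm:fact-rep}) and stability is an open condition, so $\mathcal W(Z)$ is locally closed in $\mathfrak W(Z)$. Your concern about reducing well-spacedness to \emph{finitely many} factorization conditions, and the cone-decomposition argument you sketch for it, is unnecessary: an arbitrary intersection of closed subschemes is again a closed subscheme, and on the level of characteristic monoids the quotients imposed by the various $H$ all live in a finitely generated monoid, so the colimit stabilizes and minimal monoids exist without any finiteness bookkeeping. (The paper does eventually reduce to codimension-one subtori in Proposition~\ref{prop: reduction-of-dimension}, but that comes \emph{after} logarithmic smoothness and is not used here.)

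For properness, the paper bypasses the valuative criterion entirely. It introduces the intermediate stack $\overnorm{\mathcal M}^{\mathrm{rad}}_{1,n}(Z)$ of stable logarithmic maps from radially aligned curves --- a logarithmic modification of the Abramovich--Chen--Gross--Siebert space $\overnorm{\mathcal M}_{1,n}(Z)$, hence proper --- and observes that $\mathcal W(Z)$ is a closed substack of it. Closed in proper is proper, and the argument is over in two lines. Your approach via the valuative criterion, pulling back and forth between $\mathscr L_\Gamma(Z)$ and $\fM_{1,n}^{\mathrm{rad}}$, reaches the same conclusion, but the compatibility checks you flag in point~(ii) of your final paragraph are exactly what the paper's packaging into $\overnorm{\mathcal M}^{\mathrm{rad}}_{1,n}(Z)$ makes automatic.
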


\begin{proof}
We have just seen that the factorization property is is representable by closed (hence proper) morphisms.  Since stability is an open condition, this shows $\mathcal W(Z)$ is a locally closed substack of $\mathfrak W(Z)$.  It also shows that $\mathcal W(Z)$ is a closed substack of the space $\Mbar^{\rm rad}_{1,n}(Z)$ of stable logarithmic maps from radially aligned curves to $Z$; this is a logarithmic modification of $\Mbar_{1,n}(Z)$, the space of stable logarithmic maps to $Z$, and is therefore proper.  It follows that $\mathcal W(Z)$ is proper.
\end{proof}

\subsection{Logarithmic smoothness}\label{sec: log-smooth}
The logarithmic tangent bundle of a toric variety $Z$ is trivial, and is naturally identified with $N\otimes_\ZZ\mathscr O_Z$, where $N = N(T) = \Hom(\Gm, T)$ is the cocharacter lattice of the dense torus.  Given a radial map $[f: C\to Z]$, the obstructions to deforming the map $[f]$ fixing the deformation of $[C]$ lie in the group 
\begin{equation*}
\operatorname{Obs}([f]) = H^1(C,f^\star T_Z^{\mathrm{log}}) = H^1(C,\mathscr O_C^{\dim Z}) 
\end{equation*}
with dimension 
\begin{equation*}
h^1(C,\mathscr O_C^{\dim Z}) = g(C)\cdot \dim Z.
\end{equation*}

Consider a torus quotient $T\to T/H$ and choose a compatible equivariant compactification
\[
Z\to Z_H,
\]
possibly passing from $Z$ to a modification, as in the previous section.
The quotient map induces a projection map on logarithmic tangent bundles, extending scalars from
\[
N(T)\to N(T/H)
\]
Choosing a splitting for the induced map on obstruction groups, we see that if the map $[\overnorm f: C\to \overnorm Z]$ is obstructed, then the map $[f]$ is also obstructed. The well-spacedness condition for radial logarithmic maps removes obstructions arising in this fashion. We now show that these obstructions are the only obstructions that arise. 

\begin{theorem} \label{thm:toric-log-smooth}
For any toric variety $Z$, the stack $\mathcal{W}(Z)$ is logarithmically smooth and unobstructed.
\end{theorem}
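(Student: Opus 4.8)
The plan is to show that $\mathcal W(Z)$ is logarithmically smooth by verifying the infinitesimal lifting criterion: given a strict square-zero extension $S \subset S'$ of logarithmic schemes and a well-spaced radially aligned logarithmic map $f : C \to Z$ over $S$, one must extend it over $S'$ (étale-locally, or after a logarithmic modification of $S'$, which is harmless since we only need log smoothness). Since $\mathfrak M_{1,n}^{\mathrm{rad}}$ is logarithmically smooth (it is a logarithmic blowup of $\mathfrak M_{1,n}$), the curve $C \to S$ extends; and since $\mathfrak W(Z) \to \mathfrak M_{1,n}^{\mathrm{rad}}$ has the obstruction theory recalled at the start of the subsection, the only obstruction to extending the map $f$ itself lives in $\operatorname{Obs}([f]) = H^1(C, f^\star T_Z^{\mathrm{log}}) = H^1(C, \mathcal O_C)^{\oplus \dim Z}$. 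So the crux is to show that the well-spacedness hypothesis kills this obstruction class.

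First I would reduce to the case of a \emph{fixed} deformation of the domain curve $C$, so that the obstruction genuinely lies in $H^1(C, \mathcal O_C)^{\oplus \dim Z} \cong H^1(C,\mathcal O_C) \otimes_{\mathbf Z} N$. Using the description of maps to toric varieties in terms of $\alpha : N^\vee \to \Gamma(C, M_C^{\mathrm{gp}})$ (Section~\ref{sec:to-toric}), the obstruction is, componentwise, the obstruction to deforming a section of $M_C^{\mathrm{gp}}$, i.e.\ of a line bundle with a piecewise linear structure — equivalently, an obstruction to deforming a map $C \to \mathbf P^1$ (after subdivision, an expansion). The key analytic input is the structure of $H^1(C, \mathcal O_C)$ for a genus~$1$ logarithmic curve: by the seminormalization analysis of Section~\ref{sec: genus-1-singularities}, $H^1(C,\mathcal O_C)$ is one-dimensional and, after contracting down to the Gorenstein model $\overnorm C$ via $\tau : \widetilde C \to \overnorm C$ built from the radius $\delta_\alpha$, one has $H^1(\overnorm C, \mathcal O_{\overnorm C}) \xrightarrow{\sim} H^1(C, \mathcal O_C)$ — the contraction does not change $H^1$ of the structure sheaf. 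The point of well-spacedness (Definition~\ref{def:well-spaced}, via the factorization property of Definition~\ref{def:factorization-no-toric}) is precisely that $\alpha$ descends to $\overnorm\alpha$ on $\overnorm C$; hence the obstruction to deforming $\alpha$ is the image of the obstruction to deforming $\overnorm\alpha$ on $\overnorm C$. But on $\overnorm C$ the relevant deformation is \emph{unobstructed} for the reason flagged in the paragraph before the theorem: the obstruction on $\overnorm C$ would have to be detected by projecting to each rank-$1$ quotient $N \to N(T/H)$, and well-spacedness has been imposed for \emph{every} subtorus $H$, in particular for each coordinate projection, so each component of the obstruction class already descends and vanishes on the contracted curve — the residue condition (Proposition~\ref{prop:dualizing-generator}, $c'=0$, all $c_i \neq 0$) is exactly what forces the descended section to be deformable.

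The cleanest way to organize this is via the exact sequence $0 \to \mathcal O_C^\star \to M_C^{\mathrm{gp}} \to \overnorm M_C^{\mathrm{gp}} \to 0$ and its $\overnorm C$-counterpart: deformations and obstructions of $\alpha$ (with $\overnorm\alpha$ held fixed, which is automatic since $\overnorm M$ is an étale sheaf insensitive to the square-zero thickening) are governed by $H^1(C, \mathcal O_C)$, and the factorization property gives a commuting square identifying the obstruction on $C$ with the pushforward of the obstruction on $\overnorm C$ along $H^1(\overnorm C,\mathcal O_{\overnorm C}) \to H^1(C,\mathcal O_C)$; since that map is an isomorphism and the obstruction upstairs on $\overnorm C$ vanishes (the elliptic component maps, after expansion, to the dense torus of a component, where the logarithmic obstruction group for the relevant $\mathbf P^1$-valued section is zero), the obstruction on $C$ vanishes. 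Conversely one should remark that this is sharp: dropping well-spacedness reintroduces exactly the $H^1(\mathcal O)$-valued obstructions, matching the heuristic in the motivational subsection.

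I expect the main obstacle to be the careful bookkeeping of the identification $H^1(\overnorm C, \mathcal O_{\overnorm C}) \xrightarrow{\sim} H^1(C,\mathcal O_C)$ together with the claim that, once $\alpha$ descends to $\overnorm C$, the descended section is genuinely deformable on the Gorenstein model — this is where the local structure of the elliptic singularity (the $c' = 0$ condition of Proposition~\ref{prop:dualizing-generator}) and the choice of contraction radius $\delta_\alpha$ (which guarantees the map is nonconstant on at least one branch after a suitable expansion) must be combined. Everything else — extending the curve via log smoothness of $\mathfrak M_{1,n}^{\mathrm{rad}}$, extending the map modulo the obstruction, and checking that well-spacedness is preserved under the extension (which follows from Theorem~\ref{thm:fact-rep}, since the factorization locus is closed and the extension is square-zero) — is formal.
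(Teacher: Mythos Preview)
Your proposal contains a genuine gap at the step where you claim ``on $\overnorm C$ the relevant deformation is unobstructed.'' This is not true, and your justification --- that after expansion the elliptic component maps into the dense torus of some component, where the obstruction group vanishes --- does not hold: the obstruction to deforming a section of $M_{\overnorm C}^{\rm gp}$ over a fixed deformation $\overnorm C'$ lies in $N \otimes H^1(\overnorm C, \mathcal O_{\overnorm C})$, and this group is still one-dimensional in each $N$-factor because $\overnorm C$ has arithmetic genus~$1$. Passing to the contracted curve does not make $H^1(\mathcal O)$ vanish; indeed you yourself note $H^1(\overnorm C, \mathcal O_{\overnorm C}) \simeq H^1(C, \mathcal O_C)$. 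The well-spacedness hypothesis is exactly what allows $\alpha$ to \emph{descend} to $\overnorm C$, but it does not by itself force the obstruction class on $\overnorm C$ to be zero.

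The paper's proof proceeds differently, and the difference is essential. Rather than fixing a deformation $C'$ of the source curve and arguing the obstruction vanishes, the paper allows $C'$ to vary: the choices of $C'$ form a torsor under $H^1(C, T_{C/S}^{\log})$, and the proof shows that the resulting action on the obstruction group $N'_i \otimes H^1(\overnorm C, \mathcal O_{\overnorm C})$ is \emph{surjective}, so one can always revise $C'$ to annihilate the obstruction. Concretely, the adjustment is made by moving the attachment points $p_j$ on the circuit component; the surjectivity comes from the fact that $x \mapsto \mathcal O_E(x)$ is \'etale on a Gorenstein genus~$1$ curve without rational tails (the paper's Lemma~\ref{lem:gen-pic} and Corollary~\ref{cor:pic-vec}), together with the observation that the outgoing slopes at the circle of radius $\delta_i$ span the relevant sublattice $N'_i$ rationally. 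Your sentence ``First I would reduce to the case of a fixed deformation of the domain curve'' is therefore exactly the wrong move: the freedom to change $C'$ is what makes the argument work. A secondary issue is that different subtori $H$ give different contraction radii $\delta_H$, so there is not a single $\overnorm C$ but a filtration $\delta_1 > \cdots > \delta_k$ with associated quotients $N \to N_i$; the paper handles this by an inductive procedure, adjusting $C'$ at each stage only within the circle of radius $\delta_i$ so as not to disturb earlier choices.
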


The proof will require the following lemma.

\begin{lemma} \label{lem:gen-pic}
Let $E$ be a connected Gorenstein curve of genus $1$ without genus~$0$ tails.  Let $E_\circ$ be the smooth locus of $E$.  The map $E_\circ \to \operatorname{Pic}^1(E)$ sending $x$ to $\mathcal O_E(x)$ is \'etale.
\end{lemma}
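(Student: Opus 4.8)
The plan is to show that the Abel–Jacobi map $E_\circ \to \operatorname{Pic}^1(E)$, $x \mapsto \mathcal O_E(x)$, induces an isomorphism on tangent spaces at every point $x \in E_\circ$; since both sides are smooth of dimension $1$ (the smooth locus of a curve is smooth, and $\operatorname{Pic}^0(E)$ is smooth of dimension $h^1(E,\mathcal O_E)=g=1$ for a genus $1$ curve with no genus $0$ tails, so that $\operatorname{Pic}^1$ is a torsor under it), this suffices for étaleness. Concretely, the differential at $x$ is the natural map
\[
T_x E_\circ \;=\; H^0(E, \mathcal O_E(x)/\mathcal O_E) \;\longrightarrow\; H^1(E,\mathcal O_E),
\]
the connecting homomorphism of the short exact sequence $0 \to \mathcal O_E \to \mathcal O_E(x) \to \mathcal O_E(x)/\mathcal O_E \to 0$. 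So the crux is to prove this connecting map is an isomorphism of one-dimensional vector spaces, equivalently that $H^0(E,\mathcal O_E(x)) \to H^0(E, \mathcal O_E(x)/\mathcal O_E)$ is the zero map, equivalently $h^0(E,\mathcal O_E(x)) = 1$.

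First I would establish $h^0(E, \mathcal O_E(x)) = 1$ for $x$ a smooth point. Any global section of $\mathcal O_E(x)$ is a rational function on $E$ regular away from $x$ with at worst a simple pole at $x$; a nonconstant such function would give a degree $1$ map $E \to \PP^1$, forcing $E$ to be reduced, irreducible (connected and no genus $0$ tails rules out reducibility creating extra sections — one must check no component of $E$ is collapsed and the map is genuinely degree one) and rational, contradicting $g(E)=1$. Cleanest is to use duality: by Serre/Grothendieck duality for the Gorenstein curve $E$, $h^1(E,\mathcal O_E(x)) = h^0(E, \omega_E(-x))$, and since $\omega_E$ has degree $0$ and is a line bundle (Gorenstein), $\omega_E(-x)$ has degree $-1 < 0$ so has no sections on a curve with no genus $0$ tails — here one uses that $E$ has no separating nodes/tails so any line bundle of negative total degree has no sections. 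Hence $h^1(E,\mathcal O_E(x)) = 0$, and Riemann–Roch for the Gorenstein curve, $\chi(\mathcal O_E(x)) = \deg + 1 - g = 1 + 1 - 1 = 1$, gives $h^0(E,\mathcal O_E(x)) = 1$.

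With $h^0(E,\mathcal O_E(x))=1 = h^0(E,\mathcal O_E)$, the inclusion $H^0(\mathcal O_E) \hookrightarrow H^0(\mathcal O_E(x))$ is an equality, so the next term $H^0(\mathcal O_E(x)/\mathcal O_E)$ injects into $H^1(\mathcal O_E)$; both are $1$-dimensional (the skyscraper $\mathcal O_E(x)/\mathcal O_E$ has length $1$ at the smooth point $x$, and $h^1(\mathcal O_E)=g=1$), so the connecting map is an isomorphism. This is exactly the differential of the Abel–Jacobi map at $x$, so the map is étale at $x$. I expect the main obstacle to be the vanishing $h^0(E,\omega_E(-x))=0$ (equivalently $h^1(E,\mathcal O_E(x))=0$): one must use the hypothesis "no genus $0$ tails" in an essential way, since on a nodal genus $1$ curve with a genus $0$ tail a negative-degree line bundle can still have sections supported on the tail, which would break the dimension count. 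The argument should invoke that $E$, having no genus $0$ tails and arithmetic genus $1$, is either irreducible or a cycle of $\PP^1$'s (or has the Gorenstein elliptic singularities classified earlier), and on each such curve a line bundle of negative degree on every component — or of negative total degree with the relevant positivity on components — has no global sections; the degree $-1$ bundle $\omega_E(-x)$ has total degree $-1$ and, crucially, nonpositive degree on each component with strict negativity somewhere, forcing $H^0=0$.
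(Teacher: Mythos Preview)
Your proposal is correct and follows essentially the same approach as the paper: both compute the differential of the Abel--Jacobi map via the long exact sequence of $0 \to \mathcal O_E \to \mathcal O_E(x) \to \mathcal O_E(x)/\mathcal O_E \to 0$, reduce to the vanishing $H^1(E,\mathcal O_E(x)) = 0$, and obtain this from Serre duality together with Riemann--Roch.

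The one place where the paper is sharper is in the vanishing $h^0(E,\omega_E(-x)) = 0$. You argue via multidegrees, asserting that $\omega_E$ has degree~$0$ on each component so that $\omega_E(-x)$ has nonpositive degree everywhere and strictly negative degree somewhere; this is true but you do not quite justify the componentwise degree claim. The paper bypasses this by observing directly that $\omega_E$ is \emph{trivial} when $E$ is connected Gorenstein of genus~$1$ with no genus~$0$ tails: indeed $h^0(E,\omega_E) = h^1(E,\mathcal O_E) = 1$, and a nonzero section of a degree~$0$ line bundle on such a curve is nowhere vanishing. With $\omega_E \cong \mathcal O_E$, one gets $h^0(\omega_E(-x)) = h^0(\mathcal O_E(-x)) = 0$ immediately, since a global regular function on a proper connected reduced curve vanishing at a point is zero. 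This is worth incorporating: it cleanly replaces your case analysis and makes the role of the ``no genus~$0$ tails'' hypothesis transparent.
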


\begin{proof}
Consider the problem of deforming $x$, while fixing its image $\mathcal O_E(x)$ in the Picard group. The obstructions to these deformations lie in $H^1(E, \mathcal O_E(x))$.  Since $E$ has no genus~$0$ tails then $\omega_E$ is trivial, and Serre duality yields the requisite vanishing.

To see that the map has relative dimension $0$, note that the relative tangent space may be identified with the quotient of $H^0(E, \mathcal O_E(x))$ by $H^0(E, \mathcal O_E)$.  An application of Riemann--Roch shows that this quotient is trivial.
\end{proof}

\begin{corollary}  \label{cor:pic-vec}
Let $E$ be a connected Gorenstein curve of genus $1$ without genus~$0$ tails and let $a_1, \ldots, a_n \in \mathbf Z^m$.  Let $E_0$ be the smooth locus of $E$.  If the $a_i$ span $\mathbf Q^m$ then the map $E_0^n \to \operatorname{Pic}(E)^m$ sending $(x_1, \ldots, x_n)$ to the tuple of line bundles associated to the divisor with $\mathbf Z^m$-coefficients $\sum a_ix_i$.
\end{corollary}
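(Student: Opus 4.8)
The plan is to factor the map through the Picard scheme, so that it becomes the composite of the \'etale map of Lemma~\ref{lem:gen-pic} with a homomorphism of group schemes, and then to conclude with a tangent-space computation. Let $A$ be the $m\times n$ integer matrix with columns $a_1,\dots,a_n$, and write $A_\star \colon \operatorname{Pic}(E)^n \to \operatorname{Pic}(E)^m$ for the induced homomorphism of Picard group schemes, so that the map in the statement is
\[
E_0^n \xrightarrow{\ \phi^n\ } \operatorname{Pic}(E)^n \xrightarrow{\ A_\star\ } \operatorname{Pic}(E)^m,
\]
where $\phi\colon E_0 \to \operatorname{Pic}^1(E) \subset \operatorname{Pic}(E)$ is the map $x\mapsto \mathcal O_E(x)$ of Lemma~\ref{lem:gen-pic}; the composite sends $(x_1,\dots,x_n)$ to the tuple of line bundles $\mathcal O_E(\sum_i (a_i)_j x_i)$, $1\le j\le m$, of the statement.

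First I would note that every scheme occurring here is smooth over $k$: this is clear for $E_0^n$, and $\operatorname{Pic}(E)$ is smooth over $k$ because $H^2(E,\mathcal O_E)=0$, so $\operatorname{Pic}(E)^m$ is smooth as well. For a morphism between smooth $k$-schemes, smoothness at a point is equivalent to surjectivity of the induced map on Zariski tangent spaces, so it suffices to check that the differential of the composite is surjective at every point $(x_1,\dots,x_n)\in E_0^n$. By Lemma~\ref{lem:gen-pic} the map $\phi$ is \'etale, hence so is $\phi^n$, and its differential is an isomorphism onto $\bigoplus_i T_{\mathcal O_E(x_i)}\operatorname{Pic}(E) \cong H^1(E,\mathcal O_E)^{\oplus n}$. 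Since $A_\star$ is a homomorphism of group schemes, its differential is independent of the base point, and under the identification $T_L\operatorname{Pic}(E)\cong H^1(E,\mathcal O_E)$ it is the map $A\otimes\operatorname{id}\colon H^1(E,\mathcal O_E)^{\oplus n}\to H^1(E,\mathcal O_E)^{\oplus m}$. Because the $a_i$ span $\mathbf Q^m$, the matrix $A$ has rank $m$ and this map is surjective; composing, the differential of the original morphism is surjective at every point, so the morphism is smooth of relative dimension $n-m$, as asserted.

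I do not anticipate a genuine obstacle: the proof is a formal concatenation of Lemma~\ref{lem:gen-pic} with linear algebra over $\mathbf Q$. The two points that merit an explicit line are the identification of the differential of $A_\star$ with $A\otimes\operatorname{id}$ on $H^1(E,\mathcal O_E)$ --- which follows from the standard deformation-theoretic description $T_L\operatorname{Pic}(E)=H^1(E,\mathcal O_E)$ together with the fact that the differential of $L\mapsto L^{\otimes a}$ is multiplication by $a$ --- and the verification that $\operatorname{Pic}(E)$ is smooth, which is what licenses the reduction to the tangent-space criterion; both are recorded above. The remaining content is the bookkeeping of the multidegrees of the line bundles $\mathcal O_E(\sum_i (a_i)_j x_i)$, which does not affect the local computation.
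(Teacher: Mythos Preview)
Your proof is correct and follows essentially the same route as the paper: both factor the map as $E_0^n \xrightarrow{\phi^n} \operatorname{Pic}(E)^n \xrightarrow{A_\star} \operatorname{Pic}(E)^m$ and invoke Lemma~\ref{lem:gen-pic} for the first factor. The only difference is in how the second factor is handled: the paper applies the elementary divisors theorem to reduce $A_\star$ to a diagonal map and then observes that multiplication by a nonzero integer on $\operatorname{Pic}(E)$ is \'etale in characteristic zero, whereas you verify surjectivity of the differential $A\otimes\operatorname{id}_{H^1(E,\mathcal O_E)}$ directly from the rank hypothesis; these are equivalent elementary arguments.
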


\begin{proof}
By the elementary divisors theorem, we can assume that the $a_i$ are multiples of the standard basis vectors.  Since $E_0$ is smooth, we may project off the factors of $E_0^n$ where $a_i$ vanishes.  Then $E_0^m \to \operatorname{Pic}(E)^m$ is the product of the maps $E_0 \to \operatorname{Pic}^1(E) \xrightarrow{a_i} \operatorname{Pic}(E)$.  The maps $E_0 \to \operatorname{Pic}^1(E)$ are \'etale by Lemma~\ref{lem:gen-pic} and multiplication by $a_i$ is \'etale because we work in characteristic zero.
\end{proof}

\begin{proof}[Proof of Theorem~\ref{thm:toric-log-smooth}] 

We will use the logarithmic infinitesimal criterion for smoothness.  We must show that whenever $S \subset S'$ is a strict infinitesimal extension of logarithmic schemes, any morphism $S \to \mathcal W(Z)$ can be extended to $S'$, completing diagrams of the following form:
\begin{equation} \label{eqn:log-lift2} \vcenter{ \xymatrix{
S \ar[r] \ar[d] & \mathcal W(Z) \\
S' \ar@{-->}[ur]
}} \end{equation}

This assertion is local in $S$, so we can restrict to a neighborhood $U$ of a geometric point $s$, such that the map $\overline M^{\mathrm{gp}}_S|_U\to\overline M^{\mathrm{gp}}_{S,s}$ is an isomorphism.  Let $\plC$ be the tropicalization of $C_s$.  

\noindent
{\sc Filtering the deformations.}
Let $N$ and $N^\vee$ be the character and cocharacter lattices of $Z$, respectively.  The moduli map $S \to \mathcal W(Z)$ gives the data of a curve $C$ and a section $\alpha \in N \otimes \Gamma(C, M_C^{\rm gp})$.  For every torsion free quotient $N \to N'$ we obtain a map $\alpha_{N'} \in N' \otimes \Gamma(C, M_C^{\rm gp})$ by composition, which we also view as a map $C \to N' \otimes \mathbf G_{\log}$. For each such map $\plC\to N'\otimes \mathbf R$, there is a largest radius $\delta_{N'} \in \Gamma(S, \overnorm M_S)$ around the minimal circuit in $\plC$ whose interior is contracted by the map.

The radii $\delta_{N'}$ for varying $N'$ are totally ordered and necessarily finite in number. Rename these distinct radii $\delta_1 \geq \delta_2 \geq \cdots \geq \delta_k$.  

Since $\overnorm\alpha$ is a piecewise linear function on $\plC$ valued in $\overnorm M_S^{\rm gp}$ we have $\overnorm\alpha(v) - \overnorm\alpha(w) \in N \otimes \ell \subset N \otimes \overnorm M_S^{\rm gp}$ whenever $v$ and $w$ are connected by an edge of $\plC$ of length $\ell$.  We call $\frac{\overnorm\alpha(v) - \overnorm\alpha(w)}{\ell}$ the \emph{slope} of $\overnorm\alpha$ along that edge.

For each $i$, we take $N_i$ be the quotient of $N$ by the saturated sublattice spanned of the slopes of $\overnorm\alpha$ along the edges of $\plC$ contained inside the circle of radius $\delta_i$.  This gives an sequence of torsion free quotients $N \to N_k \to N_{k-1} \to \cdots \to N_1$.

For each $i$, let $Z_i = N_i \otimes \mathbf G_{\log}$.  Then we obtain a sequence of maps:
\begin{equation*}
\mathcal W(Z) \to \mathcal W(Z_k) \to \mathcal W(Z_{k-1}) \to \cdots \to \mathcal W(Z_1)
\end{equation*}
The first map is logarithmically \'etale since $Z$ is logarithmically \'etale over $\Hom(N, \mathbf G_{\log})$.  It will now suffice to show that $\mathcal W(Z_i) \to \mathcal W(Z_{i-1})$ is logarithmically smooth for all $i$.  For each $i$, let $\alpha_i$ be the image of $\alpha$ in $N_i \otimes \Gamma(C, M_C^{\rm gp})$.  We make the following observations:
\begin{enumerate}
\item We have $\delta_i \in \Gamma(S, \overnorm M_S)$ such that $\overnorm\alpha_i \in N_i \otimes \Gamma(C, \overnorm M_C^{\rm gp})$ is constant on the interior of the circle of radius $\delta_i$ around the central vertex of $\plC$;
\item the slopes of $\overnorm\beta$ on the edges of $\plC$ exiting the circle of radius $\delta_i$ span the kernel of $N_{i+1} \to N_i$ as a rational vector space.
\end{enumerate} 
The second observation requires a slight argument.  By definition, the slopes of $\overnorm\alpha$ within the circle of radius $\delta_i$ span the kernel of $N \to N_i$ rationally.  But if the edges inside the circle of radius $\delta_{i}$ together with those immediately exiting it spanned a smaller saturated subgroup of than the kernel of $N \to N_{i+1}$ then there would have been another $\delta_j$ in between $\delta_{i}$ and $\delta_{i+1}$.

\noindent
{\sc The iterative procedure.}
The map $\alpha : S \to \mathcal W(Z)$ gives families $C \leftarrow \widetilde C \to \overnorm C$ where $\overnorm C$ is the contraction of the circle of radius $\delta$.  We can regard $\alpha$ as an element of $N \otimes \Gamma(\overnorm C, M_{\overnorm C}^{\rm gp})$.  We examine extensions of these data to $C' \leftarrow \widetilde C' \to \overnorm C'$ and $\alpha' \in N \otimes M_{\overnorm C'}^{\rm gp}$.  The problem is addressed in two steps:  first we choose a deformation of $C$ (entailing deformations of $\widetilde C$ and $\overnorm C$), which is an obstructed problem, and then we try to lift $\alpha$, which can be obstructed for a fixed choice of $C'$.  We then revise our choice of deformation $C'$ to eliminate the obstruction to lifting $\alpha$.

The choices of $C'$ form a torsor under $H^1(C, T_{C/S}^{\log})$.  We will adjust $C'$ iteratively, lifting $\alpha_i$ to $\alpha'_i \in N_i \otimes \Gamma(\overnorm C', M_{\overnorm C'}^{\rm gp})$ based on an already selected lift of $\alpha_{i-1}$.  At each step, we will adjust $C'$ by a section of $H^1(C, T_{C/S}^{\log})$ that vanishes on the interior of the circle of radius $\delta_i$, thereby ensuring that our earlier choices are not broken by the later adjustments.

\noindent 
{\sc The obstruction group.}  Let $N'_i$ be the kernel of $N_i \to N_{i-1}$.  We indicate how $N'_i \otimes H^1(\overnorm C, \mathscr O_{\overnorm C}))$ functions as an obstruction group to deforming $\alpha_i$ once $\alpha_{i-1}$ and $C'$ are fixed.

By definition, the lifting problem~\eqref{eqn:log-lift2} is equivalent to the problem of extending $\alpha \in N_i \otimes \Gamma(\overnorm C, M_{\overnorm C}^{\rm gp})$ to $\alpha' \in N_i \otimes \Gamma(\overnorm C', M_{\overnorm C'}^{\rm gp})$. Recall that $\alpha$ gives an invertible sheaf, $\mathscr O_{\overnorm C}(-f(\overnorm\alpha))$ for each $f \in N_i^\vee$, and $f(\alpha)$ is a nowhere vanishing global section of $\mathscr O_{\overnorm C}(-f(\overnorm\alpha))$.  We will abuse notation slightly and think of $\mathscr O_{\overnorm C}(-\overnorm\alpha)$ as a family of invertible sheaves indexed by $N_i^\vee$ and $\alpha$ as a trivialization of this family.  Let $\overnorm\alpha'$ denote the unique extension of $\overnorm\alpha \in \Gamma(\overnorm C, \overnorm M_{\overnorm C}^{\rm gp})$ to $\overnorm M_{\overnorm C'}^{\rm gp}$.  Our task is to extend $\alpha$ to a trivialization of $\mathscr O_{\overnorm C'}(\overnorm\alpha')$.

If it exists, an extension will necessarily be a trivialization, so the obstruction to the existence of an extension is the isomorphism class of the deformation $\mathscr O_{\overnorm C'}(\overnorm\alpha')$, which lies in $N_i \otimes H^1(\overnorm C, \mathscr O_{\overnorm C})$.  By induction, the image of this obstruction in $N'_i \otimes H^1(\overnorm C, \mathscr O_{\overnorm C})$ vanishes, so our obstruction lies in $N'_i \otimes H^1(\overnorm C, \mathscr O_{\overnorm C})$.

\noindent
{\sc Deformations of the curve.} 
This obstruction may well be nonzero, but we are still free to choose $C'$.
The choice of $C'$ is a torsor under the deformation group $\operatorname{Def}(C) = H^1(C, T_{C/S}^{\log})$.  This gives a homomorphism
\begin{equation} \label{eqn:def-obs1}
H^1(C, T_{C/S}^{\log}) = \operatorname{Def}(C) \to \operatorname{Obs}_C(f) = N_i \otimes H^1(\overnorm C, \mathscr O_{\overnorm C})
\end{equation}
that we wish to show surjects onto $N'_i \otimes H^1(\overnorm C, \mathscr O_{\overnorm C})$.  Once this is done, we can modify $C'$ to eliminate the obstruction.

Since $C$ is a curve, the formation of $H^1(C, T^{\log}_{C/S})$ and of $H^1(\overnorm C, \mathcal O_{\overnorm C})$ commutes with base change.  By Nakayama's lemma, we may therefore demonstrate the surjectivity of~\eqref{eqn:def-obs1} by checking it on the fibers.  We may therefore replace $S$ with a geometric point and assume that $S$ is the spectrum of an algebraically closed field.

We let $\overnorm\plC$ be the dual graph of $\overnorm C$, where the interior of the circle of radius $\delta_i$ is treated as a single vertex.  For each vertex $v_i$ of $\overnorm\plC$ other than the central vertex $v_0$, let $e_j$ be the edge of $\overnorm\plC$ that is closest to $v_0$ (in other words, the edge on which $\lambda$ has negative slope when it is oriented away from $v_j$).  Let $p_j$ be the node of $\overnorm C$ corresponding to $e_j$.  For $j \neq 0$, the corresponding component of $\overnorm C$ is rational, and therefore $T_{C_j}^{\log}$ has nonnegative degree.  It follows that the maps
\begin{equation*}
H^0(C_j, T_{C_j}^{\log}) \to H^0(p_j, T_{C_j}^{\log}) \qquad \text{and} \qquad H^0(C_j, \mathcal O_{C_j}) \to H^0(p_j, \mathcal O_{C_j})
\end{equation*}
are surjective for all $j \neq 0$. From the normalization sequence, we see that there are decompositions:
\begin{equation*}
H^1(C, T_C^{\log}) = \bigoplus_j H^1(C_j, T_{C_j}^{\log}) \qquad \text{and} \qquad H^1(\overnorm C, \mathcal O_C) = \bigoplus_j H^1(C_j, \mathcal O_{C_j})
\end{equation*}
Since $H^1(C_j, \mathcal O_{C_j}) = 0$ for $j \neq 0$, it follows that we may reduce to the case $C = C_0$ and $\overnorm C = \overnorm C_0$.  Note that in this case, $\overnorm C_0$ has no genus~$0$ tails.

\noindent 
{\sc The obstruction class.} Now let $p_1, \ldots, p_n$ be the external points of $C$ corresponding to the edges $e_1, \ldots, e_n$ of $\overnorm\plC$ adjacent to the central vertex.  Then $H^1(C, T_{C/S}^{\log})$ contains a copy of $\sum_j T_{p_j/C}$ corresponding to deformation of $C$ as a logarithmic curve by moving the points $p_j$.  

Let $a_j \in N'_i$ be the slope of $\overnorm\alpha$ on $e_j$ and recall that the $a_j$ span $N'_i$ as a rational vector space.  Then the obstruction class in $N'_i \otimes H^1(\overnorm C, \mathcal O_{\overnorm C})$ is given by the following formula:
\begin{equation*}
\mathcal O_{\overnorm C}(\overnorm\alpha) = \mathcal O_{\overnorm C}(\sum a_j p_j) \in N'_i \otimes H^1(\mathcal O_{\overline C})
\end{equation*}
Thus the obstruction map
\begin{equation}\label{eqn: obs-class}
\sum_{j = 1}^n T_{p_j/C} \to H^1(C, T_{C/S}^{\log}) \to N_i \otimes H^1(\overnorm C, \mathcal O_{\overnorm C})
\end{equation}
restricts to the tangent map $\sum T_{p_j/C} \to N'_i \otimes H^1(\overnorm C, \mathscr O_{\overnorm C})$ considered in Corollary~\ref{cor:pic-vec}.  Since the $a_i$ span $N'_i$ rationally and $\overnorm C$ has no genus~$0$ tails, that corollary implies the desired surjectivity.  The points $p_j$ all lie on the boundary of the circle of radius $\delta_i$, so any class in $\sum T_{p_j/C} \subset H^1(C, T_{C/S}^{\log})$ vanishes on the interior of the circle of radius $\delta_i$, as required.
\end{proof}

\begin{remark}
The proof shows a stronger smoothness property, because we were able to cancel obstructions using only deformations of the marked points without needing to smooth any of the singularities of $\overnorm C$.
\end{remark}

It is a consequence of the proof that we can make following ``codimension $1$'' characterization of the moduli space.

\begin{proposition}\label{prop: reduction-of-dimension}
The substack of $\mathfrak W(Z)$ parametrizing stable radial maps that satisfy the factorization property for all subtori of codimension $1$ coincides with the space $\mathcal W(Z)$ of well-spaced radial maps.
\end{proposition}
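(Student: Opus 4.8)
The inclusion $\mathcal W(Z) \subseteq$ (the substack of stable radial maps with the factorization property for all codimension $1$ subtori) is immediate, since a well-spaced map has the factorization property for every subtorus, in particular every one of codimension $1$. So the plan is to prove the reverse inclusion: that a stable radial map $f : C \to Z$ over $S$, inducing $\alpha : N^\vee \to \Gamma(C, M_C^{\rm gp})$ as in Definition~\ref{def:fact-subtorus} and having the factorization property for every codimension $1$ subtorus of $T$, is in fact well-spaced. Because stability is an open condition and, by Theorem~\ref{thm:fact-rep} applied to the composites $N^\vee_{T/H} \to N^\vee \to \Gamma(-,M^{\rm gp})$ over $\mathfrak W(Z)$, the factorization property for any fixed subtorus is representable by a closed embedding into $\mathfrak W(Z)$, it suffices to prove the functorial assertion: if $\alpha$ has the factorization property for every codimension $1$ subtorus, then it has it for an arbitrary subtorus $H \subseteq T$.

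First I would reduce to rank $1$. Let $N^\vee_{T/H} \subseteq N^\vee$ be the saturated sublattice of characters vanishing on $H$ and choose a basis $\chi_1, \dots, \chi_r$ of it; each $\chi_i$ is then primitive in $N^\vee$, hence the character of a surjection $T \to \Gm$ with kernel a codimension $1$ subtorus $H_i$, and $H = \bigcap_i H_i$. Unwinding Definitions~\ref{def:factorization-no-toric} and~\ref{def:fact-subtorus}, the factorization property for $H$ asserts that the homomorphism $\alpha_H \colon N^\vee_{T/H} \to \Gamma(\widetilde C, M_{\widetilde C}^{\rm gp})$ descends along the contraction $\tau \colon \widetilde C \to \overnorm C^{(\delta_H)}$ of $C$ at the radius $\delta_H$ attached to $\overnorm\alpha_H$. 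Since a homomorphism out of a free abelian group factors through a subgroup exactly when the images of a basis do, and the pullback $\tau^\star \colon \Gamma(\overnorm C^{(\delta_H)}, M^{\rm gp}) \to \Gamma(\widetilde C, M^{\rm gp})$ is injective ($\tau$ has connected fibres), this is equivalent to requiring that each $\alpha(\chi_i)$ descend along $\tau$. Likewise, the factorization property for $H_i$ asserts that $\alpha(\chi_i)$ descends along the contraction of $C$ at the radius $\delta_i$ attached to the rank $1$ function $\overnorm\alpha(\chi_i)$.

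The crux, and the only place the radial alignment hypothesis enters essentially, is the comparison of radii. Because $C$ is radially aligned, the values $\lambda(v)$ on each geometric fibre of $\plC$ are totally ordered, so the sections $\delta_i, \delta_H \in \Gamma(S, \overnorm M_S)$ are fibrewise comparable; and since the $N_{T/H}$-valued piecewise linear function $\overnorm\alpha_H$ is constant on a ball around the circuit precisely when all of its coordinates $\overnorm\alpha(\chi_i)$ are, one has $\delta_H = \min_i \delta_i$, in particular $\delta_H \leq \delta_i$ for all $i$. By the construction of contractions recalled in Section~\ref{sec: prelims-from-prequel}, contracting $C$ out to radius $\delta_i$ factors as contracting out to radius $\delta_H$ followed by a further contraction, so there is a morphism $\overnorm C^{(\delta_H)} \to \overnorm C^{(\delta_i)}$ over $S$ (after replacing $\widetilde C$ by a common semistable model dominating all the destabilizations in play, which leaves the factorization conditions unchanged). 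Hence if $\alpha(\chi_i)$ is pulled back from $\Gamma(\overnorm C^{(\delta_i)}, M^{\rm gp})$ --- the factorization property for $H_i$, which we assume --- then a fortiori it is pulled back from $\Gamma(\overnorm C^{(\delta_H)}, M^{\rm gp})$. As this holds for all $i$, the reduction of the previous paragraph shows $\alpha_H$ descends along $\tau$, i.e.\ $\alpha$ has the factorization property for $H$. This is the required functorial statement, and the equality of substacks follows formally from it together with the representability statements above. One recognises here the filtration $N \to N_k \to \cdots \to N_1$ by radii from the proof of Theorem~\ref{thm:toric-log-smooth}, which is the sense in which this is a consequence of that proof.

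I expect the only real nuisance in the write-up to be the bookkeeping around semistable models: the destabilization $\widetilde C \to C$ in Definition~\ref{def:factorization-no-toric} depends on the chosen radius, hence on the subtorus, so one must pass to a common refinement, check that the factorization property is insensitive to such refinements, and verify carefully that the contraction at the smaller radius $\delta_H$ genuinely dominates the one at each larger radius $\delta_i$ --- all of which is routine but must be carried out over a general base just as in Propositions~\ref{prop:constructible}--\ref{prop:minimal}.
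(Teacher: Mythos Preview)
Your argument is correct and takes a genuinely different route from the paper's. The paper argues by contrapositive through the obstruction calculus of Theorem~\ref{thm:toric-log-smooth}: if factorization fails for some $H$, the edge directions exiting the circle of radius $\delta_H$ fail to span $N'_H$ rationally (via Corollary~\ref{cor:pic-vec}), so the obstruction class survives projection to a rank~$1$ quotient, exhibiting a codimension~$1$ subtorus for which factorization fails. Your approach is direct and more elementary: you never invoke the Picard variety or the deformation--obstruction map, only the inequality of radii $\delta_H=\min_i\delta_i$ and the transitivity of descent along $\widetilde C\to\overnorm C^{(\delta_H)}\to\overnorm C^{(\delta_i)}$. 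What the paper's route buys is conceptual unity with the smoothness proof; what yours buys is a self-contained statement that does not appeal to any infinitesimal analysis.

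On the nuisance you flag: the existence of the morphism $\overnorm C^{(\delta_H)}\to\overnorm C^{(\delta_i)}$ is real but easy once unpacked. The essential content is that if $\alpha(\chi_i)$ descends to $\overnorm C^{(\delta_i)}$ then its restriction to the contracted locus $B_i$ is literally a constant section of $\pi^\star M_S^{\rm gp}$ (it is pulled back from the single point $p_i$). Since the components of $\widetilde C$ meeting $B_H$ at radius $\delta_H$ lie in $B_i$ whenever $\delta_H<\delta_i$, the function is constant on each branch germ of $\overnorm C^{(\delta_H)}$ at $p_H$; the seminormal agreement condition and the linear residue condition $\sum c_jb_j=0$ of Section~\ref{sec: genus-1-singularities} are then trivially satisfied (all $b_j=0$), giving descent. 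This bypasses any need to identify the two Gorenstein structures, and works fibrewise over a general base; the representability of the factorization property then promotes the pointwise statement to an equality of closed substacks, as you note.
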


\begin{proof}
It suffices to treat the case of $Z = \GG_{\log}^n$. Assume that a map $C\to \GG_{\log}^n$ fails to satisfy the factorization property for some subtorus $H$ in $T$. In keeping with the notation of the previous proof, we let $N'_H\subset N$ be the associated cocharacter subspace, and let $N_H$ be the quotient of $N$ by $N'_H$. We demonstrate that the map fails to satisfy the factorization property for a codimension $1$ subtorus. 

If the map is obstructed, then the cokernel in Equation~(\ref{eqn: obs-class}) is nonzero. It follows from Corollary~\ref{cor:pic-vec} that for the radius $\delta$ associated to the map $\plC\to N_H$, the exiting edge directions $a_i$ of $\plC\to N\otimes \RR$ at this radius do not span $N'_H$ rationally. The obstruction class described in the proof above therefore lies in the cokernel
\[
N_H/\mathrm{Span}(a_i)\otimes H^1(\overline C,\mathcal O_{\overline C}).
\]
Given any such obstruction class, we may find a projection by a character $N_H/\mathrm{Span}(a_i)\to \RR$, such that the class remains nonzero, by projecting onto the $1$-dimensional span of the obstruction class. This gives rise to a composition $N\to N_H\to \mathbf R$, and an associated map on tori $\GG_{\log}^n\to \GG_{\log}$. The induced map on logarithmic tangent bundle is given by extending scalars from the projection $N\to \RR$. The obstruction to lifting the map to $\GG_{\log}$ is the image of the obstruction to lifting the map to $\GG_{\log}^n$, under the projection
\[
N_H\otimes H^1(\overline C,\mathcal O_{\overline C})\to \RR \otimes H^1(\overline C,\mathcal O_{\overline C}).
\]
The resulting radial map $C\to \GG_{\log}$ therefore does not factorize, as the obstruction class is nonzero by construction. 
\end{proof}

\section{Realizability for genus one tropical curves}\label{sec: tropical-moduli}

In this section, we use the geometry of the moduli spaces $\mathcal W(Z)$ constructed in Section~\ref{sec:log-maps-to-toric} to resolve the tropical realizability problem in genus $1$. The results of this section give a precise description of the boundary complex of $\mathcal W(Z)$. As a consequence of the smoothness and properness of $\mathcal W(Z)$, tropical realizability reduces to a \emph{pointwise} calculation: we examine the unique non-topological condition characterizing the descent of a function from the normalization of a genus~$1$ singularity, and interpret it tropically as the realizability condition.

\subsection{Moduli of tropical maps} Fix a pair of dual lattices $N$ and $N^\vee$ of rank $r$ and a complete fan $\Sigma$ in the vector space $N_\RR$. 

\begin{definition}
The \textbf{combinatorial type} of a tropical stable map $[\plC\xrightarrow{f} \Sigma]$ consists of
\begin{enumerate}
\item The finite graph model $G$ underlying $\plC$.
\item For each vertex $v\in G$, the cone $\sigma_v\in \Sigma$ containing the image of $v$.
\item For each edge $e$, the slope $w_e$ and the primitive vector $u_e$ of $f(e)$. 
\end{enumerate}
\end{definition}

For tropical maps, the discrete data can be captured by the ``least generic'' map, defined below.

\begin{definition}
The \textbf{recession type} of a combinatorial type $\Theta$ is obtained from $[\plC\to \Sigma]$ by collapsing all bounded edges of $\plC$ to a single vertex, retaining the contact orders on the unbounded edges.  
\end{definition}

As explained in~\cite[Section 2]{R16}, once one fixes the recession type, there are finitely many combinatorial types of tropical stable maps with this recession type. This boundedness of combinatorial types is the essential content of~\cite[Section~3.1]{GS13}.

Given a type $\Theta$, there is a polyhedral cone $\sigma_\Theta$, whose relative interior parameterizes tropical stable maps with a fixed combinatorial type. This cone serves as a deformation space for maps of type $\Theta$. In~\cite[Section 2.2]{R16}, a generalized cone complex $T_\Gamma(\Sigma)$ is constructed, by taking a colimit of the cones above over a natural gluing operation. This is a coarse moduli space for maps of fixed recession type. 

\begin{remark}{\it (A moduli stack of tropical maps).} It is possible to promote this construction to a fine moduli stack of tropical maps.   By replacing the real edge lengths in $\plC$ with monoid-valued edge lengths, one obtains an appropriate notion of a family of tropical stable maps over a cone $\sigma$.  With this notion of family, the framework in \cite{CCUW} produces a cone stack $\mathcal{T}_\Gamma(\Sigma)$, with well-defined evaluation morphisms. The addition of a marked point with trivial contact order functions as a universal curve in this context. We avoid further discussion of this for two reasons. First, we will not need the stacks directly in this work, and can make do with the less conceptually natural, but more concrete generalized cone complex. Second, and more importantly, the precise relationship between the analytification of the moduli space of maps --- which coincides with the analytification of the coarse moduli space --- remains unclear at present.
\end{remark}

\subsection{Traditional tropicalization {\it \&} realizability} The tropicalization procedure discussed in the early parts of the paper uses the logarithmic structure, and differs from the one involving non-archimedean geometry. Accounting for the difference is the \textbf{tropical realizability problem}, and is the focus of this final section.

Let $K$ be a non-archimedean field extending $\CC$, where the latter is equipped with the trivial valuation. Let $Y$ be a $K$-scheme or stack, locally of finite type. The \textbf{Berkovich analytification} $Y^{\an}$ is a locally compact, Hausdorff topological space whose points are naturally identified with equivalence classes of pairs
$(L,y)$
where $L$ is a valued field extension of $K$ and $y$ is an $L$-valued point of $Y$. The equivalence is the one generated by identifying two such triples $(L,y)\sim (L',y')$ whenever there is an embedding of valued extensions $L\hookrightarrow L'$ sending $y$ to $y'$. See~\cite{Ber90,U14b,Yu14a} for Berkovich spaces and stacks and~\cite{ACMUW} for an introduction to analytic spaces in the context of logarithmic geometry.

Given a torus $\mathbf G_m^r = \spec(K[N^\vee])$, the \textbf{tropicalization} map is the continuous map
\[
\trop: \mathbf G_{m,\an}^n\to N\otimes \RR,
\]
that associates to an $L$-valued point of $\mathbf G_m^n$, its coordinatewise valuation. The tropicalization of a subvariety is defined by restriction.

Let $C\to \mathbf G_m^n$ be a map to a torus from a smooth curve of genus $g$. There is a natural factorization of topological spaces
\[
\begin{tikzcd}
C^{\an} \arrow{d}\arrow[end anchor={[xshift=4.3em]west}]{r} & \hskip4.3em \mathbf G_{m,\an}^r \arrow[d,shift left=2.5em] \\
\plC \arrow{r} & \trop(C^{\an})\subset \RR^r,
\end{tikzcd}
\]
The left vertical map is a deformation retraction onto a \textbf{skeleton}; see~\cite{BPR16} for details. There are at least two natural ways to extract the tropical curve $\plC$ from $[C\to \mathbf G_m^r]$.

\subsubsection{Abstract stable reduction} 

After choosing coordinates on the target, the map $[\varphi: C\to \mathbf G_m^n]$ is given by $n$ invertible functions on $C$. Let $\widehat C$ be the smooth projective model for $C$, and $q_1,\ldots, q_n$ the points at which these invertible functions acquire zeros or poles. If the map $\varphi$ is nonconstant, the pair $(\widehat C,q_1,\ldots, q_n)$ has negative Euler characteristic and thus admits a minimal model $\mathscr C\to \spec(R)$ over the valuation ring of $K$. Take the underlying graph of $\plC$ to be the dual graph of the special fiber of $\mathscr C$. Given an edge $e$ of $\plC$, the corresponding node $q_e$ of $\mathscr C$ has a local equation
\[
xy = \omega, \ \ \omega\in R.
\]
Set the length $\ell(e)$ equal to the valuation of the parameter $\omega$. 

\subsubsection{Universal property of minimality} Let $\widehat C\supset C$ be the projective model of $C$ with boundary $\partial \widehat C = \{q_1,\ldots, q_n\}$ and choose a toric compactification $Z$ of $\mathbf G_m^n$ such that the morphism
\[
(\widehat C,\partial \widehat C) \to (Z,\partial Z).
\] 
is a logarithmic map.  Letting $\mathscr L(Z)$ be the space of logarithmic stable maps to $Z$, this gives rise to a moduli map $\spec(K)\to \mathscr L(Z)$,
which, after a base change, extends to a map
\[
\spec(R)\to \mathscr L(Z),
\]
from the valuation ring. Let $k$ denote the residue field and $\Gamma$ the value group. Consider the logarithmic map
\[
\spec(\Gamma\to k)\to \mathscr L(Z),
\]
from the closed point, endowed with the (not necessarily coherent) logarithmic structure from the value group. By the universal property of minimality, this induces a factorization
\[
\spec(\Gamma\to k)\to \spec(P^{\mathrm{min}}\to k),
\]
where $P^{\mathrm{min}}$ is the stalk of the minimal monoid of $\mathscr L(Z)$ at the image of the closed point. We obtain a point of the dual cone $\Hom(P^{\mathrm{min}},\Gamma)$, which, as was previously discussed, is identified with a point in the cone of tropical maps of a fixed combinatorial type. See~\cite[Section 2]{R16} for details. 

\subsection{Expected dimension {\it \&} superabundance} Every tropical stable map $[f]$ of combinatorial type $\Theta$ has a deformation space, the moduli cone $\sigma_\Theta$. Superabundance is the phenomenon wherein this deformation space is larger than expected.

The \textit{overvalence} of a type $\Theta$ with underlying graph $G$ is defined as
\[
\mathrm{ov}(\Theta) = \sum_{p\in G:\val(p)\geq 4} \val(p)-3.
\]
The overvalence allows us to determine an expected topological dimension of the tropical deformation space as:
\[
\expdim \sigma_\Theta = (\dim(\Sigma)-3)(1-b_1(G))+n-\mathrm{ov}(\Theta),
\]
where $b_1(\plC)$ is the first Betti number of $G$. The actual dimension of $\sigma_\Theta$ cannot be less than the expected dimension, but may exceed it. For further details, see~\cite{Mi03,NS06,R16}.

\begin{definition}
A combinatorial type $\Theta$ is \textbf{superabundant} if the dimension of $\sigma_\Theta$ is strictly larger than the expected dimension.
\end{definition}

\setcounter{subsubsection}{\value{theorem}}
\subsubsection{Superabundance as tropical obstructedness} The deformation space of a map $[\varphi: C\to \PP^r]$ can be larger than expected because deformations can be obstructed. The dimension of the deformation space can be estimated using Riemann--Roch and the tangent-obstruction complex~\cite[Section 24.4]{Hori03}. One examines the restrictions on the complex structure of the curve that are forced by the map. In some cases, such as when $\varphi$ multiple covers its image or contracts a component, there are fewer such restrictions than expected. 

The situation in tropical geometry is similar. Given a tropical stable map $[f: \plC\to \Sigma]$ and a cycle of edges in $\plC$, the piecewise linearity of $f$ imposes restrictions on the edge lengths of this cycle. In particular, the edge lengths of a cycle are constrained by the condition that the total displacement around each cycle must vanish. If $\dim \Sigma = r$ the map is expected to impose $r$ conditions on the edge lengths of $\plC$ for each cycle, and the conditions imposed by different cycles are expected to be independent. However, if cycles are mapped to linear subspaces, or contracted altogether, there are fewer than the expected number of restrictions. 

In genus $1$, superabundance can be stated in a simplified form. In the following proposition, and the rest of the section, it will sometimes be convenient to forget the precise fan structure of $\Sigma$ and consider the map of metric spaces $\plC\to N_\RR$.

\setcounter{theorem}{\value{subsubsection}}
\begin{proposition}
Let $f:\plC\to \Sigma$ be a tropical map from a tropical curve of genus $1$. Then, $f$ is superabundant if and only if the image of the circuit $\plC_0$ is contained in a proper affine subspace of $\Sigma$. Equivalently, $f$ is superabundant if and only if there exists a character $\chi:N_\RR\to \RR$ such that the circuit $\plC_0$ is contracted under the composition
\[
\plC\to N_\RR\xrightarrow{\chi} \RR
\]
\end{proposition}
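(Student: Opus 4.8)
The plan is to reduce the statement to a dimension count on the moduli cone $\sigma_\Theta$. First I would recall from \cite{R16} and the preceding discussion that $\dim\sigma_\Theta$ equals the dimension of the space of tropical maps with combinatorial type $\Theta$, and that a set of affine-linear coordinates on $\sigma_\Theta$ is given by the edge lengths $\ell(e)$ subject to the conditions that, going around each cycle of $G$, the total displacement in $N_\RR$ vanishes. In genus $1$ there is a single independent cycle, namely the circuit $\plC_0$, so the edge lengths are subject to exactly one vector equation $\sum_{e\in\plC_0} w_e\,\ell(e)\, u_e = 0$ in $N_\RR$, where $u_e$ is the primitive direction and $w_e$ the expansion factor along $e$. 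The expected dimension formula records the expectation that this imposes $\dim\Sigma = r$ independent scalar conditions (one fails to be independent when $b_1 = 0$, which is why the formula has the $(1-b_1(G))$ factor), so $f$ is superabundant precisely when the rank of this system of linear equations on the circuit edge lengths drops below $r$.

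Next I would unwind what a rank drop means geometrically. The vectors $\{w_e u_e : e\in\plC_0\}$ are, up to sign depending on orientation, the successive edge-direction vectors of the loop $\plC_0$; the constraint map sends the tuple of circuit edge lengths to their weighted sum in $N_\RR$. Its image is the linear span $L := \Span_\RR\{w_e u_e : e\in\plC_0\} = \Span_\RR\{u_e : e\in\plC_0\}$ (the weights are nonzero so they do not affect the span). Thus the rank of the constraint system is $\dim L$, and superabundance is the condition $\dim L < r$. Now I would observe that $\dim L < r$ is equivalent to the image $f(\plC_0)$ lying in a proper affine subspace of $N_\RR$: indeed $f(\plC_0)$ is a union of segments whose direction vectors are exactly the $u_e$, so its affine hull is a translate of $L$, and this is proper exactly when $\dim L < r$. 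Finally, the reformulation in terms of a character is a restatement of the same fact: $f(\plC_0)$ is contained in a proper affine subspace iff there is a nonzero linear functional (character) $\chi : N_\RR\to\RR$ that is constant on the affine hull, which is iff $\chi$ annihilates $L$, which is iff $\chi\circ f$ is constant on $\plC_0$, i.e.\ the circuit is contracted by $\plC\to N_\RR\xrightarrow{\chi}\RR$.

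The one point that requires genuine care — and which I expect to be the main obstacle in writing the argument cleanly — is justifying that in genus $1$ the \emph{only} possible source of superabundance is the rank of the single circuit constraint, i.e.\ that the contributions of vertices in higher-dimensional cones, of the tree parts hanging off the circuit, and of the expansion factors all enter exactly as in the expected-dimension formula and cannot independently cause or cure superabundance. For the tree parts this is clear since edges not on $\plC_0$ impose no constraints and contribute their lengths freely; for the vertex cone data one uses that moving a vertex within the relative interior of its cone $\sigma_v$ is unobstructed and already accounted for. So after isolating the circuit, the computation is the linear-algebra identity $\dim\sigma_\Theta = \expdim\sigma_\Theta + (r - \dim L)$, from which the proposition is immediate. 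I would present this as: compute both sides of that identity, identify the excess as $r - \dim L$, and then translate $\dim L < r$ into the two stated geometric conditions as above.
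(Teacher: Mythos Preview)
Your proposal is correct and is essentially the standard linear-algebra argument for this fact. The paper itself does not spell out a proof: it declares the first characterization ``well known'' with citations to \cite{KatLift,R16,Sp07}, and for the equivalence with the character formulation says only ``choose a hyperplane containing the circuit and quotient by it.'' What you have written is precisely the content one would extract from those references, so there is no substantive difference in approach --- you are supplying the details the paper omits.

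One small remark on the point you flag as the main obstacle: your concern about whether vertex-in-cone conditions or tree edges could independently affect superabundance is well placed, and your resolution is right --- those contribute only open (inequality) conditions and free parameters respectively, so the only equality constraint on $\sigma_\Theta$ in genus~$1$ is the single circuit closure equation. You might also note that when the circuit degenerates to a single genus~$1$ vertex (so $b_1(G)=0$), the statement should be read with care; the paper, like the references it cites, is implicitly in the regime where the circuit is an honest cycle of edges, and your argument handles exactly that case.
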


\begin{proof}
The first formulation is well known~\cite{KatLift,R16,Sp07}. For the second, choose a hyperplane containing the circuit and quotient by it.
\end{proof}

\begin{figure}
\begin{tikzpicture}
 \foreach \b in {-1,0,1,2,3}
  \fill (0,1) circle (0.05 cm);
    \fill (0,0) circle (0.05 cm);     \fill (1,0) circle (0.05 cm);     \fill (1,2) circle (0.05 cm);\fill (2,1) circle (0.05 cm);\fill (2,2) circle (0.05 cm);

\draw (0,0) -- (1,0);
\draw (0,0) -- (0,1);
\draw (0,1) -- (1,2);
\draw (1,2) -- (2,2);
\draw (1,0) -- (2,1);
\draw (2,1) -- (2,2);
\draw (-1,-1) -- (0,0);
\draw (-1,1) -- (0,1);
\draw (-1,1)--(-2,0);
\draw (-1,1)--(-1,2);
\draw[dashed] (-2,0)--(-2.5,-0.5);
\draw[dashed] (-1,2)--(-1,3);
\draw (1,-1) -- (1,0);
\draw (1,-1)--(0,-2);
\draw (1,-1)--(2,-1);
\draw[dashed] (0,-2)--(-0.5,-2.5);
\draw[dashed] (2,-1)--(3,-1);

\draw (1,2) -- (1,3);
\draw[dashed] (1,3) -- (1,3.5);
\draw (2,1) -- (3,1);
\draw[dashed] (3,1) -- (3.5,1);
\draw (2,2) -- (2.5,2.5);
\draw (2.5,2.5)--(2.5,3);
\draw (2.5,2.5)--(3,2.5);
\draw[dashed] (3,2.5) -- (3.5,2.5);
\draw[dashed] (2.5,3) -- (2.5,3.5);
\draw[dashed] (-1,-1) -- (-1.5,-1.5);
\end{tikzpicture}
\caption{A tropical genus $1$ curve in $\Sigma_{\PP^2}$ of degree $3$ with transverse contact orders. The curve is non-superabundant, as the edge directions of the circuit span $\RR^2$.}
\end{figure}
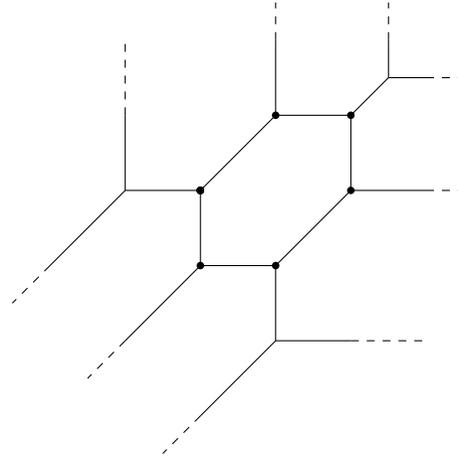

\subsection{Tropical realizability {\it \&} well-spacedness}\label{sec: well-spacedness} 

 The tropical realizability problem is as follows. 

\begin{question}
Given a tropical stable map $f: \plC\to N_\RR$, does there exist a smooth curve $C$ over a non-archimedean field $K$ and a map
\[
\varphi : C\to \mathbf G_m^r,
\]
such that $\varphi^\trop = f$?
\end{question}

\noindent
Such a tropical map is said to be \textbf{realizable}. Superabundance is intimately related to realizability, as the following result shows. For proofs, see~\cite{CFPU,R16,Sp07}.

\begin{theorem}
Let $f: \plC\to \Sigma$ be a tropical stable map of genus $1$ and combinatorial type $\Theta$. If $\plC$ has a vertex $v$ of genus $1$, then assume that the local map
\[
\mathrm{Star}(v)\to\Sigma
\]
is realizable. If the combinatorial type $\Theta$ is non-superabundant, then $f$ is realizable.
\end{theorem}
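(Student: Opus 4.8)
The plan is to derive the statement from Theorem~\ref{thm: tropical} by showing that a non-superabundant genus~$1$ tropical stable map automatically satisfies the well-spacedness condition, so that it is already accounted for by the proper tropicalization map of that theorem. Since realizability concerns maps to the torus $\mathbf G_m^r$, I would first pass to the recession type $\Gamma$ of the combinatorial type $\Theta$: this is exactly the data (genus together with the contact orders carried by the ends of $f$) needed to form $\cM_\Gamma^{\an}(\mathbf G_m^r)$ and the cone complex $W_\Gamma(\RR^r)$ of well-spaced tropical maps, and the ambient fan $\Sigma$ is irrelevant once one restricts attention to the dense torus. It then suffices to prove that the point of the tropical moduli space determined by $f$ lies in $W_\Gamma(\RR^r)$ and to pull it back along $\trop$.

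The key step is the implication: if $\Theta$ is non-superabundant, then $f$ is well-spaced. By the superabundance criterion recalled above, non-superabundance says that for every primitive character $\chi\colon N\to\ZZ$ the composition $\plC\xrightarrow{f}N_\RR\xrightarrow{\chi}\RR$ is non-constant on the circuit $\plC_0$, i.e.\ no character contracts $\plC_0$. By Proposition~\ref{prop: reduction-of-dimension} (equivalently the remark after Definition~\ref{def:well-spaced}), well-spacedness of $f$ is the conjunction, over all such $\chi$, of the factorization property of Definition~\ref{def:factorization-no-toric} for the associated section $\ZZ=N^\vee_{T/H}\to N^\vee\to\Gamma(C,M_C^{\rm gp})$. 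But when $\chi$ does not contract $\plC_0$, the radius $\delta$ attached to that section in Section~\ref{sec:factorization} is $0$: nothing lies inside the circle of radius $\delta$, the destabilization $\widetilde C\to C$ and the contraction $\widetilde C\to\overnorm C$ are isomorphisms, and the factorization property holds vacuously. Running this over every $\chi$ shows $f$ is well-spaced, so the associated point lies in $W_\Gamma(\RR^r)$.

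Now I would invoke Theorem~\ref{thm: tropical}: the tropicalization map $\trop\colon\cM_\Gamma^{\an}(\mathbf G_m^r)\to W_\Gamma(\RR^r)$ is continuous and proper, and $P_\Gamma(\RR^r)$, a skeleton of the source, maps onto $W_\Gamma(\RR^r)$ by a finite morphism that is an isomorphism on each cone; hence $\trop$ is surjective. A point of $\cM_\Gamma^{\an}(\mathbf G_m^r)$ lying over $f$ is, by definition of the Berkovich analytification, a valued field $K$ extending $\CC$ together with a map $\varphi\colon C\to\mathbf G_m^r$ from a smooth genus~$1$ curve over $K$ with $\varphi^\trop=f$; this is the desired realization. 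Finally, if $\plC$ has a genus~$1$ vertex $v$, the genus of $\plC$ is concentrated at $v$, so $\plC$ is $\mathrm{Star}(v)$ with genus~$0$ trees attached at some of its ends. Each such tree is realizable without obstruction, and its realization can be glued to the assumed realization of $\mathrm{Star}(v)\to\Sigma$ along the corresponding node with gluing parameter of valuation equal to the length of the attaching edge; smoothing these nodes is unobstructed, since no new genus is introduced, and yields a smooth curve realizing $f$.

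The main obstacle I expect is not the well-spacedness verification, which is a bookkeeping matter about the radius $\delta$, but the precise identification --- built into Theorem~\ref{thm: tropical}, and ultimately into the properness and logarithmic smoothness of $\mathcal W_\Gamma(Z)$ (Theorems~\ref{thm:w-rep-proper} and~\ref{thm:toric-log-smooth}) --- of the logarithmic tropicalization, through which the skeleton of the moduli space is described, with the non-archimedean tropicalization $\varphi^\trop$ appearing in the realizability problem; in particular one must check that the point produced by surjectivity of $\trop$ genuinely has smooth generic fibre mapping to the dense torus. With that identification available, the remaining argument is organizational.
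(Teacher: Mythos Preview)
The paper does not give its own proof of this statement: it is presented as known background with the attribution ``For proofs, see~\cite{CFPU,R16,Sp07}.''  Your proposal instead derives the result from the main theorems of the present paper, which is a genuinely different route from the cited literature, where the argument proceeds by direct deformation-theoretic analysis rather than through the moduli space $\mathcal W_\Gamma(Z)$ and the identification of its skeleton.  As an alternative argument your approach is essentially sound in the principal case $b_1(G)=1$: if no character $\chi$ contracts the circuit then condition~(1) of Definition~\ref{def: well-spacedness} holds for every $\chi$, the map is tropically well-spaced and lies in $W_\Gamma(\RR^r)$, and surjectivity of $\trop$ furnishes the realization.  Two small corrections.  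First, you invoke Proposition~\ref{prop: reduction-of-dimension} and Definition~\ref{def:well-spaced}, which concern the \emph{logarithmic} factorization property, when what you actually need is the \emph{tropical} well-spacedness of Definition~\ref{def: well-spacedness}; the two notions coincide by Theorem~\ref{thm: realizability}, but the argument is cleaner if you stay on the tropical side.  Second, you should note explicitly that there is no circularity: the proofs of Theorems~\ref{thm: realizability} and~\ref{thm:nonarch} make no use of the non-superabundant case.

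For the genus~$1$ vertex case your gluing argument is the right idea, but there is a point you glossed over: when the circuit is a single vertex its image is automatically contained in a proper affine subspace, so the Proposition characterizing superabundance does not literally apply, and one must revert to the dimension-count definition.  In fact the non-superabundance hypothesis is largely redundant here: once $\mathrm{Star}(v)\to\Sigma$ is realized, attaching and smoothing genus~$0$ trees is unobstructed regardless.  What your approach buys is a uniform conceptual explanation that makes the statement a corollary of this paper's machinery; what the direct arguments in the cited references buy is independence from that machinery.
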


When a combinatorial type $\Theta$ is superabundant, there are additional constraints that are required to characterize the realizable locus.

A \textit{flag} of a tropical curve $\plC$ is a vertex $v$ together with a choice of tangent direction along an edge incident to $v$. The vertex $v$ will be referred to as the \textit{base} of the flag. Given a piecewise-linear function $f$ on a tropical curve $\plC$, we may speak of the \textbf{slope} of $f$ along a flag. 

\begin{definition}\label{def: well-spacedness}
Let $\plC$ be a tropical curve and let $\plC_0$ be its circuit. Given a flag $t\in \plC$, let $d(t,\plC_0)$ be the distance from the circuit to the base of the flag. A tropical stable map 
\[
F: \plC\to \RR
\] 
of genus $1$ is \textbf{well-spaced} if one of the following two conditions are met:  either
\begin{enumerate}
\item no open neighborhood of the circuit of $\plC$ is contracted, or
\item if a neighborhood of the circuit is contracted, let $t_1,\ldots,t_k$ be the flags whose base is mapped to $F(\plC_0)$ but along which $F$ has nonzero slope. Then, the minimum of the distances $\{d(t_i,\plC_0)\}_{i=1}^k$ occurs at least three times.
\end{enumerate}
\end{definition}

Well-spacedness when the target is a general fan is formulated by considering projections to $\RR$. 

\begin{definition}
A tropical stable map $\plC\to \Sigma$ of genus $1$ is \textbf{well-spaced} if for each character
\[
\chi: N_\RR\to \RR,
\]
the induced map $\plC\to \RR$ is well-spaced. 
\end{definition}

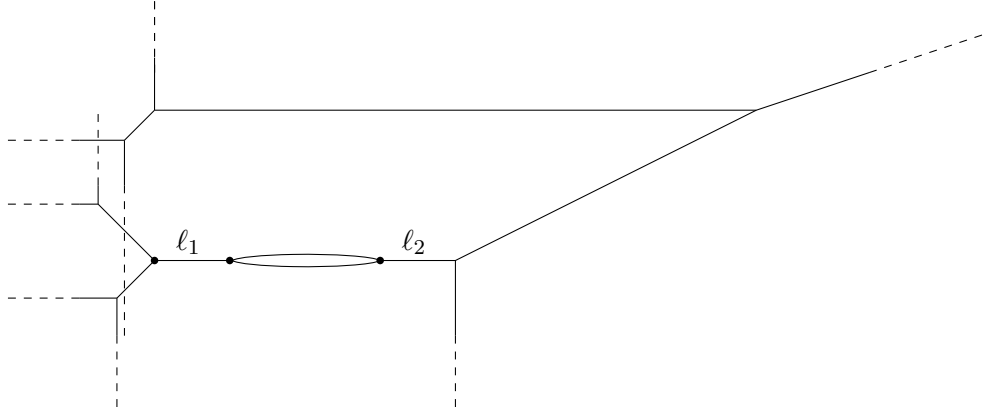
\begin{figure}[h!]
\begin{tikzpicture}[scale=1]
\draw (0,0) -- (0,1) -- (4,3) -- (5.5,3.5);
\draw[dashed] (5.5,3.5)--(7,4);
\draw[dashed] (-4,3.5)--(-4,4.5);
\draw[dashed] (-4,3.5)--(-4,4.5);
\draw[dashed] (-5,2.6)--(-6,2.6);
\draw[dashed] (-5,0.5)--(-6,0.5);
\draw[dashed] (-4.5,0)--(-4.5,-1);
\draw[dashed] (0,0)--(0,-1);

\draw (0,1) -- (-1,1);
\fill (-1,1) circle (0.5mm);
\fill (-3,1) circle (0.5mm);

\draw (-1,1) arc (10:170:1 and 0.1);
\draw (-1,1) arc (-10:-170:1 and 0.1);

\draw (-3,1) -- (-4,1);
\draw (-4,1) -- (-4.75,1.75);
\draw (-4,1)--(-4.5,0.5);
\draw (-4.5,0.5)--(-5,0.5);
\draw (-4.5,0.5)--(-4.5,0);

\draw (-4.75,1.75)--(-4.75,2);
\draw (-4.75,1.75)--(-5,1.75);
\draw[dashed] (-4.75,2)--(-4.75,3);
\draw[dashed] (-5,1.75)--(-6,1.75);

\draw (4,3) -- (-4,3);
\draw (-4,3)--(-4,3.7);
\draw (-4,3)--(-4.4,2.6);
\draw (-4.4,2.6)--(-5,2.6);
\draw (-4.4,2.6)--(-4.4,2);
\draw[dashed] (-4.4,2)--(-4.4,0);

\node at (-0.55,1.25) {$\ell_2$};
\node at (-3.55,1.25) {$\ell_1$};

\fill (-4,1) circle (0.5mm);

%
%
%
%
%
%
%

\end{tikzpicture}   
\caption{A superabundant tropical stable map to a Hirzebruch surface. The circuit is depicted to be flattened, indicating that its image is a line segment. Projection onto the vertical axis contracts the circuit. The curve is well-spaced if and only if $\ell_1 = \ell_2$.}
\end{figure}

\begin{warning}\label{warning: diff-defs}
The condition we call well-spacedness is strictly weaker condition than the one given originally by Speyer. In particular, the definition allows that the set of flags with nonzero $F$-slope $\{t_i\}$ can all be based at the same vertex. In Speyer's definition, there must be distinct vertices achieving this minimum. It has already been shown that Speyer's condition is not a necessary condition in the nontrivalent case~\cite[Theorem C]{R16}. The two definitions coincide when working with trivalent tropical curves whose vertex function is identically zero. To see this, observe that by the balancing condition, if a vertex supports one flag of nonzero $F$-slope. Thus, if two distinct vertices support flags with nonzero $F$-slope, then there are at least $4$ such flags. We will refer to this stronger condition as \textbf{Speyer's condition}; see Figure~\ref{fig: well-spaced-not-speyer}.
\end{warning}

\begin{remark}
We have chosen to state well-spacedness in terms of projections to $1$-dimensional vector spaces, as this is closest to the existing versions of the condition present in the literature. A reader who wishes to see the parallelism with Section~\ref{sec:factorization}  one could instead impose an appropriate condition on the quotient by any real subspace of $N_\RR$. 
\end{remark}

\begin{remark}
We make note of a consequence of this condition that is often useful in calculations. Let $\plC\to N_\RR$ be a tropical map from a genus $1$ curve. Let $L\subset N_\RR$ be the real span of the edge directions of the circuit of $\plC$. Let $\delta$ be the minimal radius around the circuit such that the edge directions inside the circle of radius $\delta$ span a subspace $L'$ strictly containing $L$. Let $m$ the difference in dimensions of $L$ and $L'$. Then if the tropical map is well-spaced, then at the circle of radius $\delta$, the curve $\plC$ exits the circle along least $m+2$ flags.
\end{remark}

This brings us to the main result of this section.

\begin{theorem}[Realizability of genus $1$ tropical curves]\label{thm: realizability}
Let $[\plC\to \Sigma]$ be a tropical stable map of genus $1$, and assume there is a minimal logarithmic map $[C\to Z]$ whose combinatorial type is that of $[\plC\to \Sigma]$. Then $[\plC\to \Sigma]$ is realizable if and only if it is well-spaced.
\end{theorem}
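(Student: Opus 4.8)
The plan is to read off realizability from the geometry of the proper, logarithmically smooth moduli space $\mathcal{W}_\Gamma(Z)$ constructed in Section~\ref{sec:log-maps-to-toric}, proving the two implications of the equivalence separately. Throughout I use that $\mathcal{W}_\Gamma(Z)$ is proper (Theorem~\ref{thm:w-rep-proper}), logarithmically smooth and unobstructed (Theorem~\ref{thm:toric-log-smooth}), and birational over the principal component $\overnorm{\mathscr L_\Gamma^\circ}(Z)$, whose generic point --- equivalently, the locus where the minimal logarithmic structure is trivial --- parametrizes maps from a \emph{smooth} genus~$1$ curve to $\mathbf{G}_m^r$, extended to $Z$ with contact order $\Gamma$. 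Both directions rest on the dictionary implicit in the proof of Theorem~\ref{thm:toric-log-smooth} between the tropical well-spacedness of $f$ and the vanishing of the line bundle obstruction $\mathcal{O}_{\overnorm C}(\sum_j a_j p_j)$ that governs the factorization property; by Proposition~\ref{prop: reduction-of-dimension} it suffices to test this obstruction against rank-one quotients of $N$, i.e.\ against characters $\chi : N_\RR \to \RR$, which is exactly the shape of Definition~\ref{def: well-spacedness}.

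For necessity, suppose $\varphi : C \to \mathbf{G}_m^r$ over a valued field $K \supset \CC$ satisfies $\varphi^\trop = f$. After a finite extension, extend $\varphi$ to a logarithmic stable map $\widehat{\mathscr C} \to Z$ over the valuation ring $R$; the logarithmic map carried by the closed fiber has tropicalization equal to $f$ by the compatibility of the logarithmic and non-archimedean tropicalizations recorded in Section~\ref{sec: well-spacedness} (cf.\ the two descriptions via abstract stable reduction and via the universal property of minimality). Because this logarithmic map is a limit of maps from smooth curves, the composition with any torus quotient $T \to T/H$ descends over the generic point trivially, so the valuative criterion for the factorization property (Proposition~\ref{prop:proper}) forces the factorization property on the closed fiber; thus the closed fiber is well-spaced as a logarithmic map, i.e.\ it lies in $\mathcal{W}_\Gamma(Z)$. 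Finally, a well-spaced logarithmic map has vanishing obstruction $\mathcal{O}_{\overnorm C}(\sum a_j p_j)$ for every character $\chi$, and by Corollary~\ref{cor:pic-vec} this vanishing is impossible unless the combinatorial spacing condition of Definition~\ref{def: well-spacedness} holds; hence $f$ is well-spaced.

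For sufficiency, first subdivide so that the combinatorial type $\Theta$ of $f$ is radially aligned and $f$ lies in the relative interior of its moduli cone $\sigma_\Theta$; this is harmless, since a realization of a subdivision of $f$ realizes $f$ and conversely, by semistable modification of the family. Using the hypothesis that a minimal logarithmic map of type $\Theta$ exists, the type-$\Theta$ locus of $\mathscr L_\Gamma(Z)$ is nonempty, and well-spacedness of $f$ says precisely --- via the residue description of the Gorenstein genus~$1$ singularities in Section~\ref{sec: genus-1-singularities} and Corollary~\ref{cor:pic-vec} --- that the external flags on the circuit's contraction can be positioned so that $\mathcal{O}_{\overnorm C}(\sum a_j p_j)$ vanishes for every character simultaneously; so $\mathcal{W}_\Gamma(Z)$ contains a point $\xi$ of combinatorial type $\Theta$ tropicalizing to $f$. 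Now invoke logarithmic smoothness: near $\xi$ the space $\mathcal{W}_\Gamma(Z)$ with its boundary is toroidal, so the open stratum (maps from smooth genus~$1$ curves to $\mathbf{G}_m^r$) lies in the closure of the stratum through $\xi$, and there is a morphism $\Spec R \to \mathcal{W}_\Gamma(Z)$ from a valuation ring with value group $\RR$ whose closed point is $\xi$, whose generic point lies in the open stratum, and which tropicalizes to $f$ (here $\sigma_\Theta$ is a cell of the boundary complex at $\xi$ because $f$ is interior to its cone after the subdivision). The generic fiber of the associated family is a smooth curve $C$ with a map $\varphi : C \to \mathbf{G}_m^r$, and $\varphi^\trop$ is the image of this trait under tropicalization, namely $f$.

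The principal difficulty is the precise translation between the combinatorial condition of Definition~\ref{def: well-spacedness} and the vanishing of the Picard-group obstruction. Concretely one must analyze, for a Gorenstein genus~$1$ curve $\overnorm C$ obtained by contracting the interior of the circle of radius $\delta$, when a divisor $\sum a_j p_j$ supported on the smooth locus at the boundary flags is principal: the degree constraint is the balancing condition, while the constraint in $\operatorname{Pic}^0(\overnorm C)$ --- an affine group determined by the number of branches of the elliptic singularity via the residue pairing of Proposition~\ref{prop:dualizing-generator} --- is solvable exactly when the minimal radius is attained along enough flags, which is the content of the last Remark preceding Theorem~\ref{thm: realizability} and unwinds to Definition~\ref{def: well-spacedness}. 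Managing the bookkeeping over all characters (handled uniformly by the rank-one reduction of Proposition~\ref{prop: reduction-of-dimension}) and ensuring the chosen trait tropicalizes to $f$ itself rather than merely to its combinatorial type (handled by the preliminary subdivision) are the remaining technical points.
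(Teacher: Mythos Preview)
Your overall architecture matches the paper's proof closely: reduce to rank~$1$ quotients via Proposition~\ref{prop: reduction-of-dimension}, use properness of the factorization property (Proposition~\ref{prop:proper}) to get necessity, and use logarithmic smoothness of $\mathcal W_\Gamma(Z)$ to smooth out a well-spaced boundary point for sufficiency. The paper does exactly this, phrasing the core task as showing that a tropical map $\plC\to\RR$ is well-spaced if and only if the associated section $\overnorm\alpha$ of $\overnorm M_{\overnorm C}^{\rm gp}$ lifts to $M_{\overnorm C}^{\rm gp}$ on the contracted curve.

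The one genuine gap is your repeated appeal to Corollary~\ref{cor:pic-vec} for the translation step. That corollary only asserts that the map $E_0^n\to\operatorname{Pic}(E)^m$ is \'etale when the $a_j$ span rationally; in the rank~$1$ situation the $a_j$ are nonzero integers and always span $\QQ$, so \'etaleness holds regardless of $n$ and tells you nothing about whether the image meets $0\in\operatorname{Pic}^0(E)$ or avoids it. In particular it does not rule out $n=2$ for necessity, nor does it produce a configuration hitting $0$ for sufficiency. The paper instead proves an explicit lemma (the unnamed lemma in Section~\ref{sec: realizability-proof}): on a Gorenstein genus~$1$ curve $E$ with $m$ branches, given nonzero integers $a_1,\dots,a_n$ partitioned among the branches with each part summing to zero, there is a configuration of \emph{distinct} points with $\mathcal O_E(\sum a_i x_i)$ trivial if and only if $n\geq 3$. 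The proof is a direct residue computation using Proposition~\ref{prop:dualizing-generator}: writing $f_j=\prod(1-x_i^{-1}t_j)^{a_i}$ on the $j$th branch, the descent condition is $\sum_j c_j b_j=0$ with $b_j=-\sum a_i x_i^{-1}$, and one checks that the $b_j$ can be tuned to satisfy this exactly when $n\geq 3$ (when $n=2$ on a single branch, $b_1$ cannot vanish without forcing $x_1=x_2$). Your final paragraph gestures at this computation, but it is the heart of the argument and cannot be replaced by Corollary~\ref{cor:pic-vec}; you should state and prove this lemma in place of those citations.
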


The proof will be completed in Section~\ref{sec: realizability-proof} after we establish some preliminaries in Section~\ref{sec:moduli-well-spaced}.


\begin{figure}[h!]
\begin{tikzpicture}[scale=1.25]
\draw[dashed] (-2.75,0)--(-2,0);
\draw[dashed] (2,0.5)--(2.75,.6875);
\draw[dashed] (2,-0.5)--(2.75,-0.6875);
\draw (-2,0)--(0,0);
\draw (0,0)--(2,0.5);
\draw (0,0)--(2,-0.5);

\draw[dashed] (-2.75,-2.5)--(-2,-2.5);
\draw[dashed] (2,-2.5)--(2.75,-2.5);
\draw (-2,-2.5)--(2,-2.5);

\fill (0,0) circle (0.5mm);
\fill (0,-2.5) circle (0.5mm);

\draw (0,0)--(0,-0.65);
\path (0,-0.65) edge [bend left] (0,-1.65);
\path (0,-0.65) edge [bend right] (0,-1.65);
\draw (0,-1.65)--(0,-2.5);
\fill (0,-0.65) circle (0.5mm);
\fill (0,-1.65) circle (0.5mm);

\draw[dashed] (-2.75,-3.75)--(-2,-3.75);
\draw[dashed] (2,-3.75)--(2.75,-3.75);
\draw (-2,-3.75)--(2,-3.75);

\draw[->] (0,-3)--(0,-3.5);
\fill (0,-3.75) circle (0.5mm);
\node at (-0.4,-0.4) {\small $\ell_1$};
\node at (-0.4,-2) {\small $\ell_2$};
\end{tikzpicture}
\caption{A tropical stable map that is well-spaced, but fails Speyer's condition. This map is well-spaced provided $\ell_1\leq\ell_2$, as there are three flags with nonzero slope based at the point of minimum distance to the circuit. Speyer's condition forces the equality $\ell_1 = \ell_2$.}\label{fig: well-spaced-not-speyer}
\end{figure}
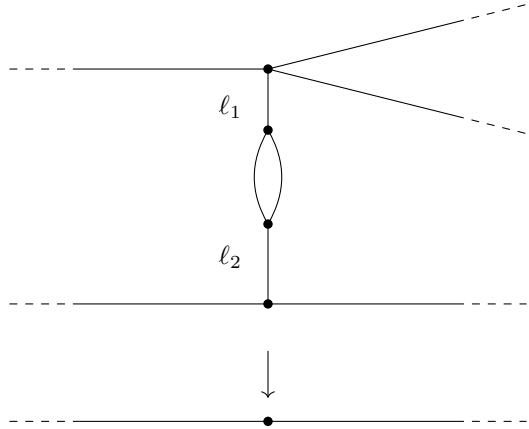

\subsection{Moduli of well-spaced tropical stable maps} \label{sec:moduli-well-spaced}
Let $T_\Gamma(\Sigma)$ be the moduli space of genus $1$ tropical stable maps with a fixed recession type $[\Gamma\to \Sigma]$. We abuse notation by understanding that the map to $\Sigma$ is part of the notation $\Gamma$. The well-spacedness condition commutes with automorphisms of tropical curves, and thus descends to a well-defined subset $W_\Gamma(\Sigma)$ of well-spaced tropical stable maps. We specify a subdivision of $T_\Gamma(\Sigma)$ such that $W_\Gamma(\Sigma)$ becomes an equidimensional subcomplex of the expected dimension. 

\begin{definition}
A \textbf{radially aligned} combinatorial type for a genus $1$ tropical stable map is a combinatorial type $\Theta$ for a tropical stable map, together with a choice of a reflexive and transitive binary relation $\preccurlyeq$ on the vertices such that, if $P$ is a path from the circuit of $G$ to a vertex $v$ that passes a vertex $v'$, then we have the relation
\[
v'\preccurlyeq v.
\]
Such a \textbf{radial combinatorial type} will be denoted $(\Theta,\preccurlyeq)$.
\end{definition}

\setcounter{subsubsection}{\value{theorem}}
\subsubsection{Constructing the tropical moduli space} Let $\widetilde W_\Gamma(\Sigma)$ denote the coarse moduli space of radially aligned tropical stable maps with recession type $\Gamma$. Given a radial combinatorial type $(\Theta,\preccurlyeq)$,  tropical maps of this type are parametrized by a face of a subdivision of the moduli cone $\sigma_\Theta$. There is a specialization relation among ordered combinatorial types: a cone $\sigma_{(\Theta',\preccurlyeq')}$ is a face of a cone $\sigma_{(\Theta,\preccurlyeq)}$ if and only if the following conditions hold.
\begin{enumerate}
\item Let $G$ and $G'$ be the underlying graphs of $\Theta$ and $\Theta'$ respectively. Then, $G'$ is obtained from $G$ by a (possibly trivial) sequence of edge contractions $\alpha: G\to G'$.
\item The edge contraction $G\to G'$ is order preserving:  if $v\preccurlyeq w$ then $\alpha(v)\preccurlyeq \alpha(w)$.
\item If $v'\in G'$ is a vertex with $\alpha(v') = v\in G$, then the cone $\sigma_{v'}$ is a face of $\sigma_v$.
\end{enumerate}

Let $W_\Gamma(\Sigma)$ be the subcomplex of $\widetilde W_\Gamma(\Sigma)$ parameterizing well-spaced radial tropical maps.

\setcounter{theorem}{\value{subsubsection}}
\begin{lemma}
The locus $W_\Gamma(\Sigma)$ is a subcomplex of $\widetilde W_\Gamma(\Sigma)$, and thus, is itself a generalized cone complex.
\end{lemma}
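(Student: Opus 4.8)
The plan is to exhibit $W_\Gamma(\Sigma)$ as a union of closed cones of $\widetilde W_\Gamma(\Sigma)$ that is closed under passage to faces; since the full subdiagram of the presenting diagram of $\widetilde W_\Gamma(\Sigma)$ supported on any face-closed subcollection of cones again presents a generalized cone complex, this yields the assertion. By Definition~\ref{def: well-spacedness}, a tropical stable map $f : \plC \to \Sigma$ is well-spaced precisely when $\chi \circ f : \plC \to \RR$ is well-spaced for every character $\chi : N_\RR \to \RR$; I would first observe that, because there are only finitely many combinatorial types with recession type $\Gamma$ (each with finitely many edges carrying integral slopes), the property ``$\chi \circ f$ is well-spaced'' depends on $\chi$ only through the set of edges of $\plC$ whose direction $\chi$ annihilates, so only finitely many characters impose conditions. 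As a finite intersection of subcomplexes is a subcomplex, it suffices to treat the target $\RR$.

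For the target $\RR$, the first step is that well-spacedness is constant on the relative interior of each cone $\sigma_{(\Theta,\preccurlyeq)}$ of $\widetilde W_\Gamma(\Sigma)$. Indeed, whether the circuit is contracted, and if so the maximal connected subgraph $\plC_F$ containing $\plC_0$ along whose edges $f$ has slope zero, together with the \emph{boundary flags} --- flags with base in $\plC_F$ and nonzero slope --- depend on the combinatorial type $\Theta$ alone; moreover any flag counted in Definition~\ref{def: well-spacedness}(2) whose base lies outside $\plC_F$ has distance to $\plC_0$ strictly larger than that of some boundary flag (the path from $\plC_0$ to its base exits $\plC_F$ through a boundary flag), hence is irrelevant to the minimum. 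The distances $d(t_j,\plC_0)=\lambda(\mathrm{base}(t_j))$ are linear functions on $\sigma_{(\Theta,\preccurlyeq)}$ whose pairwise comparisons on the relative interior are dictated by the radial order $\preccurlyeq$, so whether the minimum of the boundary distances is attained at least three times (with multiplicity of flags) is a Boolean invariant of $(\Theta,\preccurlyeq)$. Together with the observation that case (1) of Definition~\ref{def: well-spacedness} is likewise determined by $\Theta$, this shows $W_\Gamma(\Sigma)$ is a union of relative-interior cells.

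It remains to show this union is closed, equivalently that well-spacedness propagates along the face maps: if the relative interior of $\sigma_{(\Theta,\preccurlyeq)}$ is well-spaced and $\sigma_{(\Theta',\preccurlyeq')}$ is a face --- obtained by contracting edges and passing to faces of $\Sigma$, with $\preccurlyeq'$ the induced order --- then the relative interior of $\sigma_{(\Theta',\preccurlyeq')}$ is well-spaced. This is the heart of the proof, and I would argue it by tracking the effect of the contraction on $\plC_0$ and on the boundary flags. Either the contraction brings a nonzero-slope flag onto the (possibly contracted) circuit, in which case no neighbourhood of the circuit is contracted and we land in case (1), which is automatically well-spaced; or the contracted region persists, and one checks that contracting edges can only make boundary distances $\lambda(v_j)$ coincide (or vanish), that any flag newly counted in Definition~\ref{def: well-spacedness}(2) sits at distance at least that of the surviving boundary flags, and that the balancing condition at the relevant vertices forces the multiplicity of the minimal boundary distance to remain at least three. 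Alternatively, this closedness can be identified with the combinatorial shadow of Proposition~\ref{prop:proper} (the valuative criterion for the factorization property) under the dictionary between well-spaced tropical maps and logarithmic maps satisfying the factorization property.

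Putting these together, $W_\Gamma(\Sigma)$ is a union of closed cones of $\widetilde W_\Gamma(\Sigma)$ stable under faces, hence a subcomplex, and therefore a generalized cone complex in its own right. The main obstacle is the propagation of well-spacedness to faces in the case where the contracted region persists: one must rule out the a priori possibility that contracting an edge absorbs a new branch into the contracted region and produces a boundary flag closer to the circuit than the existing minimal ones while destroying their multiplicity, and the resolution uses balancing at the relevant vertices.
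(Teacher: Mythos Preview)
Your approach is correct and considerably more detailed than the paper's. The paper's proof is essentially one sentence: it observes that the well-spacedness condition is expressible purely in terms of equalities among the radial distances $\lambda(v)$ of certain vertices, and since the cones of $\widetilde W_\Gamma(\Sigma)$ are by construction the loci where the radial preorder $\preccurlyeq$ is fixed, any such equality condition cuts out a union of faces. The paper then declares the result immediate.

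Your proof unpacks this into two genuine steps --- constancy on relative interiors and closure under face maps --- and in doing so addresses a point the paper glosses over: what happens when passing to a face involves actual edge contractions (so $\Theta$ changes, not merely $\preccurlyeq$). Your step~2 is exactly the paper's observation made precise. Your step~3 is additional work: the argument that contracting a boundary edge trades one boundary flag for at least one new one (via balancing at the far vertex), that the radial order is preserved under the face relation so no new vertex can undercut the minimum, and that extending the flat region only introduces boundary flags at larger radii, are all correct and together close the gap. The alternative you mention --- invoking Proposition~\ref{prop:proper} --- would be circular here, since the equivalence of tropical and logarithmic well-spacedness is only established later in Section~\ref{sec: realizability-proof}.

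What the paper's approach buys is brevity: once one internalizes that $\widetilde W_\Gamma(\Sigma)$ is \emph{defined} by subdividing along all comparisons of $\lambda$-values, any Boolean combination of such comparisons is tautologically a subcomplex, and the edge-contraction compatibility is swept into the definition of the face relation. What your approach buys is an honest verification that this tautology survives the change of combinatorial type, which is where the balancing condition enters.
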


\begin{proof}
The well-spacedness condition can be described in terms of the equality of the vertices at minimum distance from the circuit, and thus form a cone of the generalized cone complex. The result follows immediately from this observation.
\end{proof}

\begin{remark}
A close relative of the space $W_\Gamma(\Sigma)$ appears in the thesis of Carolin Torchiani, namely the dense open set of $W_\Gamma(\Sigma)$ parametrizing curves with identically zero genus function. In particular, it is proved that this subcomplex is pure-dimensional of the expected dimension~\cite[Theorem 3.2.10]{Torch-thesis}. It follows from this that $W_\Gamma(\Sigma)$ is also pure-dimensional. In particular, we consider the following. It would be interesting to examine the fine structure of $W_\Gamma(\Sigma)$ further.  What can one say, for instance, about its homotopy type and connectivity properties?
\end{remark}

\subsection{Proof of Theorem~\ref{thm: realizability}}\label{sec: realizability-proof} 
We know from Section~\ref{sec: log-smooth} that the moduli space of well-spaced logarithmic stable maps $\mathcal W_\Gamma(Z)$ is proper and smooth.  By definition, it is the locus of stable maps in $\mathfrak W_\Gamma(Z)$ that satisfy the factorization property for every subtorus of the dense torus of $Z$.  Our task is to show that the logarithmic well-spacedness condition is equivalent to the tropical well-spacedness condition. By Proposition~\ref{prop: reduction-of-dimension}, the logarithmic well-spacedness condition is the conjunction of the factorization properties for all $1$-dimensional quotients of $Z$.  Since the tropical well-spacedness condition was formulated in terms of $1$-dimensional quotients, it suffices to check the equivalence for every subtorus $H$ in $Z$ of codimension $1$. Replacing $Z$ with a modification and passing to the quotient, our obligation reduces to checking that \emph{a tropical map $\plC \to \RR$, in which all vertices of $\plC$ have genus~$0$, is well-spaced if and only if it is the tropicalization of a radial map $C \to \PP^1$ satisfying the factorization property}.

Let $C$ be a logarithmic curve with tropicalization $\plC$.  The map $[\plC \to \RR]$ induces a destabilization $\upsilon : \widetilde C \to C$ and a contraction $\tau : \widetilde C \to \overnorm C$.  The map itself can be regarded as a section $\overnorm\alpha$ of $\overnorm M_C^{\rm gp}$.  This pulls back to $\overnorm M_{\widetilde C}^{\rm gp}$ and then descends to $\overnorm M_{\overnorm C}^{\rm gp}$, since it is constant on the components collapsed by $\tau$.  Adding a constant to $\overnorm\alpha$ does not change whether it is well-spaced in either the logarithmic or the tropical sense, so we assume that $\overnorm\alpha$ takes the value $0$ on the circuit component of $\overnorm C$.

We must show that $\overnorm\alpha$ lifts to a section $\alpha$ of $M_{\overnorm C}^{\rm gp}$ if and only if $\plC \to \RR$ is well-spaced.  Indeed, if $\alpha$ is a section of $M_{\overnorm C}^{\rm gp}$ then $\upsilon_\star \tau^\star \alpha$ is a section of $\upsilon_\star M_{\widetilde C}^{\rm gp} = M_C^{\rm gp}$ by~\cite[Appendix~B]{AMW12}, and gives a map $C \to \PP^1$ with the factorization property.

Let $E$ denote the circuit component of $\overnorm C$ and $E^\circ$ its interior, excluding the nodes where $E$ is joined to the rest of $\overnorm C$ (in other words, the locus in $E$ where the logarithmic structure is pulled back from the base).  Since $\overnorm\alpha(E) = 0$, the lift $\alpha \big|_{E^\circ}$, if it exists, will be in $\mathcal O_{E^\circ}^\star \subset M_{E^{\circ}}$.  Regarded as a rational function on $E$, this lift must have zeroes and poles along the points of attachment between $E$ and the rest of $\overnorm C$ as specified by the outgoing slopes of $\overnorm\alpha$ along the corresponding edges (see Section~\ref{sec:tropicalization}). Once $\alpha \big|_{E^\circ}$ has been found, there is no obstruction to extending it to all of $\overnorm C$, since the rest of the curve is a forest of rational curves and $\overnorm\alpha$ is balanced.  The following lemma determines whether $\alpha \big|_{E^\circ}$ can be found, and completes the proof of the theorem.

\begin{lemma}
Let $E$ be a Gorenstein, genus~$1$ curve with no nodes and $m$ branches, let $a_1, \ldots, a_n$ be nonzero integers, and let $P$ be a partition of $1, \ldots, n$ into $m$ parts.  Assume that, for each $p \in P$, we have $\sum_{i \in p} a_i = 0$.  If $n \geq 3$ then there is a configuration of distinct points $x_1, \ldots, x_n$ on $E$, with each point lying in the component corresponding to its part of the partition, such that $\mathcal O_E(\sum a_i x_i)$ is trivial.  If $n = 2$ then there is no such configuration.
\end{lemma}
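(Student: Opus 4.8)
The plan is to convert triviality of $\mathcal{O}_E(\sum a_i x_i)$ into the existence of a rational function on $E$ with that divisor, reduce this via the local structure of the genus~$1$ singularity to a single linear equation in the coordinates of the $x_i$, and then solve (or obstruct) that equation. The first step is to pin down the global shape of $E$: since $E$ is proper, Gorenstein, of arithmetic genus $1$, smooth away from its unique singular point $o$, and has $m$ branches there, the $\delta$-invariant at $o$ equals $m$, and an Euler-characteristic count on the normalization forces $n : \widetilde E \to E$ to be $\coprod_{j=1}^m \PP^1$, one component per branch, with $n^{-1}(o) = \{o_1,\dots,o_m\}$ and distinct components meeting only at $o$. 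Fix a coordinate $t_j$ on the $j$-th branch with $t_j(o_j) = 0$. The essential local input --- precisely the residue computation underlying Proposition~\ref{prop:dualizing-generator} --- is the descent criterion: a tuple $(g_j)_j$ of rational functions, $g_j$ on the $j$-th branch, each regular and nonvanishing at $o$, is the germ at $o$ of a regular function on $E$ if and only if the values $g_j(o_j)$ all agree \emph{and} $\sum_j c_j\, g_j'(o_j) = 0$, where the $c_j$ are the coefficients in \eqref{eqn:omega-E}, \emph{all nonzero} because $E$ is Gorenstein. As $E$ is obtained from $\widetilde E$ by pinching only at $o$, such a tuple extends to a genuine rational function on $E$, so $\mathcal{O}_E(\sum a_i x_i)$ is trivial precisely when a compatible such tuple with the prescribed divisor exists.

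Now the bookkeeping. If $x_i$ lies on branch $p(i)$ at coordinate $\tau_i \in k^\star$ (nonzero since the $x_i$ are smooth points; in the positive case we simply choose the $\tau_i$ finite), then on branch $j$ any function with the prescribed divisor is $g_j = \lambda_j \prod_{i\in p_j}(t_j - \tau_i)^{a_i}$; the hypothesis $\sum_{i\in p_j} a_i = 0$ makes $g_j$ regular and nonvanishing at $\infty$ and gives $g_j(o_j) = \lambda_j \prod_{i\in p_j}\tau_i^{a_i}$, so matching these values pins the $\lambda_j$ down to one overall scalar, and a logarithmic-derivative computation turns $\sum_j c_j\, g_j'(o_j) = 0$ into the single equation $\sum_{i=1}^n c_{p(i)} a_i / \tau_i = 0$ (the scalar drops out). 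Writing $b_i := c_{p(i)} a_i$ (each nonzero, with $\sum_{i\in p_j} b_i = 0$ for each $j$, hence $\sum_i b_i = 0$) and $y_i := \tau_i^{-1}$, the lemma is reduced to: \emph{for $n \geq 3$ one can choose $y_i \in k^\star$, pairwise distinct within each part of $P$, with $\sum_i b_i y_i = 0$; for $n = 2$ one cannot.} Here distinctness of the $x_i$ is equivalent to distinctness of the $y_i$ within each part, since points on different components can coincide only at $o$.

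For $n \geq 3$, the variety $V = \{y \in (k^\star)^n : \sum_i b_i y_i = 0\}$ is irreducible of dimension $n-1$ and nonempty (it contains $(1,\dots,1)$ because $\sum_i b_i = 0$); for $i \neq i'$ in a common part, $V \not\subset \{y_i = y_{i'}\}$, since equality of these two hyperplanes would force $(b_\ell)$ to be supported on $\{i,i'\}$, whereas $(b_\ell)$ has at least three nonzero entries. Hence the complement in $V$ of the finitely many forbidden diagonals is a nonempty open subset, and any $k$-point of it, run back through the construction, gives a legal configuration and an explicit trivializing function. For $n = 2$, the condition $\sum_{i\in p} a_i = 0$ forces $a_2 = -a_1$ with both points on one branch; a function with divisor $a_1(x_1 - x_2)$ must restrict there to $\lambda\bigl((t-\tau_1)/(t-\tau_2)\bigr)^{a_1}$ with $\lambda \neq 0$ (and similarly if a point is at $\infty$) and to nonzero constants on the other branches, so the descent condition collapses to $c_{p(1)}\, g_1'(o_1) = 0$; but $g_1'(o_1) \neq 0$ (since $\lambda \neq 0$, $a_1 \neq 0$ in characteristic zero, and $\tau_1 \neq \tau_2$ are nonzero) while $c_{p(1)} \neq 0$ by the Gorenstein hypothesis --- a contradiction, so no configuration exists.

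The substantive part is the local-to-global reduction: translating ``$\mathcal{O}_E(\sum a_i x_i)$ is trivial'' into the single linear equation $\sum_i c_{p(i)} a_i/\tau_i = 0$, with the Gorenstein assumption entering exactly as ``all $c_j$ are nonzero'' and the hypothesis $n \geq 3$ entering exactly as ``$(b_\ell)$ has more than two nonzero entries''. Once that reduction is in place the positive half is a one-line general-position argument over the infinite field $k$, and the negative half a direct contradiction; the only remaining point, that a rational function on $\widetilde E$ whose germ at $o$ lies in $\mathcal{O}_{E,o}$ descends to $E$, is immediate from the pinching description of $E$.
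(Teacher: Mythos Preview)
Your proof is correct and follows essentially the same route as the paper's: both reduce triviality of $\mathcal O_E(\sum a_i x_i)$ to the descent criterion $\sum_j c_j b_j = 0$ from Section~\ref{sec: genus-1-singularities} (with $b_j$ the linear coefficient of the function on the $j$th branch), compute $b_j$ in terms of the coordinates of the $x_i$, and then solve or obstruct the resulting linear equation $\sum_i c_{p(i)} a_i \tau_i^{-1} = 0$. The only organizational difference is that the paper argues branch-by-branch (each $b_j$ can be made any nonzero value, and zero if $|p_j|\geq 3$), while you treat the condition as a single hyperplane in $(k^\star)^n$ and invoke irreducibility to avoid the finitely many diagonals; your formulation has the minor advantage of handling the distinctness of the $x_i$ explicitly.
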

\begin{proof}
Let $\nu : F \to E$ be the seminormalization and let $\omega_E$ be the dualizing sheaf.  For any configuration of the $x_i$, subject to the degree constraint in the statement, there is a rational function $f$ on $F$ with divisor $\sum a_i x_i$, and $f$ is unique up to scaling.  We wish to determine whether $f$ descends to~$E$.

Let $y \in F$ be the preimage of the singular point of $E$ and let $\phi$ be a nonzero global differential on $E$.  Let $F_j$ be the components of $F$ and let $\nu_j : F_j \to E$ be the restrictions of $\nu$ and let $f_j$ be the restriction of $f$ to $F_j$.  Let $t_j$ be a local parameter for $F_j$ at $y$ and let $b_j$ be the linear term of the expansion of $f_j$ in terms of $t_j$.  It was shown in Section~\ref{sec: genus-1-singularities} that there are nonzero constants $c_j$ such that $f$ descends to $E$ if and only if
\begin{equation} \label{eqn:descent-obstruction}
\sum_j c_j b_j = 0 .
\end{equation}
We argue that under the hypothesis of the Lemma it is possible to configure the $x_i$ on each component $F_j$ to make $b_j$ take any value we like.  Indeed, if we decide $f(y)$ should be $1$ then $f_j$ has the formula
\begin{equation*}
f_j = \prod_i (1 - x_i^{-1} t_j)^{a_i}
\end{equation*}
with the product taken over those $i$ such that $x_i$ lies on $F_j$.  The 
linear part is
\begin{equation*}
b_j = - \sum a_i x_i^{-1} .
\end{equation*}
By adjusting the positions of the $x_i$, we can arrange for $b_j$ to have any nonzero value we like.  If $p_j$ consists of at least $3$ points $x_i$ then it is possible to achieve any value for $b_j$, including $0$, but if $p_j$ consists of only two points, $x_{i}$ and $x_{i'}$ then $a_{i'} = -a_{i}$ and it is impossible for $b_j$ to take the value $0$.  

Thus we can solve~\eqref{eqn:descent-obstruction} provided either that there are at least two branches at $y$ or there is one branch containing at least $3$ of the $x_i$.  The one remaining case is where there is one branch containing $2$ points.  In that case, $c_1 \neq 0$ and the remarks in the last paragraph show there is no solution to~\eqref{eqn:descent-obstruction}.
%
%
%
\end{proof}

The above result determines the dual complex of the space $\mathcal W_\Gamma(Z)$, and we obtain the following as a consequence of general structural results about tropicalizations of logarithmic schemes. Let $\mathcal W^\circ_\Gamma(Z)$ denote the locus of maps with trivial logarithmic structure.

\begin{theorem} \label{thm:nonarch}
There is a continuous tropicalization map 
\[
\trop: \mathcal W^{\circ,\an}_{\Gamma}(Z) \to W_\Gamma(\Sigma),
\]
functorial with respect to evaluation moprhisms and forgetful morphisms to the moduli space of curves. Set theoretically, this map sends a family of logarithmic stable maps to its tropicalization. There is a factorization
\[
\begin{tikzcd}
\mathcal W^{\circ,\an}_{\Gamma}(Z) \arrow{rr}{\trop} \arrow[swap]{dr}{\bm p} & & W_\Gamma(\Sigma) \\
& \mathrm{P}_\Gamma(\Sigma) \arrow[swap]{ur}{\trop_{\mathfrak{S}}}, &
\end{tikzcd}
\]
where the map $\bm p$ is a deformation retraction onto a generalized cone complex, and admits a canonical continuous section. The map $\trop_{\mathfrak{S}}$ is finite and is an isomorphism of cones upon restriction to any face. 
\end{theorem}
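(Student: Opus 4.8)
The plan is to deduce Theorem~\ref{thm:nonarch} by combining the algebro-geometric results of Section~\ref{sec:log-maps-to-toric} with the general machinery relating the Berkovich analytification of a logarithmically smooth proper stack to the generalized cone complex of its tropicalization, as developed in~\cite{ACMUW,R16,CCUW}. The key inputs are: (i) Theorem~\ref{thm:w-rep-proper}, which says $\mathcal W_\Gamma(Z)$ is a proper logarithmic algebraic stack; (ii) Theorem~\ref{thm:toric-log-smooth}, which says it is logarithmically smooth; and (iii) Theorem~\ref{thm: realizability}, which identifies the cones of its characteristic monoid with the cones of well-spaced radially aligned tropical maps. The overall shape of the argument is: build $\trop$ from the functoriality of tropicalization on the analytification of a logarithmic stack; build $P_\Gamma(\Sigma)$ as the cone complex whose cones are the minimal monoids of $\mathcal W_\Gamma(Z)$, i.e.\ the subdivided complex $\widetilde W_\Gamma(\Sigma)$ restricted to well-spaced types (Section~\ref{sec:moduli-well-spaced}); then produce $\bm p$ as the skeletal retraction for a logarithmically smooth proper toroidal stack, and identify $\trop_{\mathfrak S}$ with the tautological map from minimal monoids to tropical maps.

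Concretely, I would proceed as follows. First, recall that for any logarithmic stack $X$ there is a continuous tropicalization map $X^{\an}\to \overnorm\Sigma_X$ to the cone stack (or generalized cone complex, after rigidifying) associated to $X$; a point of $X^{\an}$ is represented by a map $\spec(\Gamma\to k)\to X$ from a valued field, which by the universal property of minimality (Section~\ref{sec:minimality}, Theorem~\ref{thm:gillam}) factors through $\spec(P^{\min}\to k)$, and the resulting point of $\Hom(P^{\min},\Gamma_{\geq 0})$ is a point of the cone of the associated combinatorial type. Applied to $X = \mathcal W^\circ_\Gamma(Z)$ and using Theorem~\ref{thm: realizability} to identify the minimal monoids with cones of well-spaced radially aligned tropical maps, this produces the continuous map $\trop\colon \mathcal W^{\circ,\an}_\Gamma(Z)\to W_\Gamma(\Sigma)$; its functoriality with respect to evaluation and forgetful morphisms is inherited from functoriality of the cone-stack construction (this is where~\cite{CCUW} and the discussion in Section~\ref{sec:char-mon} of maps to $\mathscr A$ enter). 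Second, set $P_\Gamma(\Sigma) := \widetilde W_\Gamma(\Sigma)\big|_{\text{well-spaced}}$, the generalized cone complex of minimal (radially aligned, well-spaced) combinatorial types; the tautological ``forget the alignment'' map $\trop_{\mathfrak S}\colon P_\Gamma(\Sigma)\to W_\Gamma(\Sigma)$ is linear and injective on each cone by construction of the subdivision in Section~\ref{sec:moduli-well-spaced}, hence an isomorphism onto its image on each face, and is finite since each cone of $W_\Gamma(\Sigma)$ is covered by finitely many cones of alignment types. Third, because $\mathcal W_\Gamma(Z)$ is logarithmically smooth and proper over $\spec\CC$ (with its trivial log structure), its toroidal skeleton $P_\Gamma(\Sigma)$ sits inside $\mathcal W^{\circ,\an}_\Gamma(Z)$ and admits a proper continuous deformation retraction $\bm p$; this is the content of the non-archimedean skeleton theory for toroidal/logarithmically smooth spaces (cf.~\cite{ACMUW,Thuillier,U14b}). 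The canonical section of $\bm p$ is the skeletal embedding, and one checks $\trop = \trop_{\mathfrak S}\circ \bm p$ by comparing the two descriptions of a point's image: the retraction records the minimal combinatorial type and the monoid homomorphism, and $\trop_{\mathfrak S}$ forgets the alignment, recovering the underlying tropical map.

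The main obstacle I expect is verifying that $\bm p$ is genuinely a deformation retraction onto a \emph{skeleton}, i.e.\ that the cone complex $P_\Gamma(\Sigma)$ really is faithfully embedded in the analytification and the retraction is proper and continuous. The subtlety is twofold. One is that $\mathcal W_\Gamma(Z)$ is a stack, not a scheme, so one must invoke the stacky versions of Thuillier's and Ulirsch's results (as in~\cite{U14b,ACMUW}) and be careful about automorphisms of tropical maps — these are exactly the identifications built into the generalized cone complex $W_\Gamma(\Sigma)$, so the bookkeeping must be consistent. The second is that logarithmic smoothness is what guarantees the skeleton has the ``expected'' cone structure with no further subdivision needed: here Theorem~\ref{thm:toric-log-smooth} is doing the essential work, since it is precisely logarithmic smoothness of $\mathcal W_\Gamma(Z)$ (as opposed to the highly singular $\mathscr L_\Gamma(Z)$) that makes the strata of the boundary in bijection with cones of minimal types, so that the toroidal skeleton is literally $P_\Gamma(\Sigma)$. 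Once these structural facts are in place, properness of $\trop$ follows from properness of $\mathcal W_\Gamma(Z)$ together with properness of $\bm p$ and finiteness of $\trop_{\mathfrak S}$, and the remaining assertions (continuity of the section, compatibility with evaluation/forgetful maps) are formal consequences of the functoriality of the skeleton construction.
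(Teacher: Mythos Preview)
Your overall strategy matches the paper's: invoke Theorem~\ref{thm: realizability} to know that exactly the well-spaced tropical maps arise, then appeal to the general structural results on skeletons of logarithmically smooth proper stacks (Thuillier, Ulirsch, and the results of~\cite{R16}) to produce the retraction $\bm p$, the section, and the functoriality. The paper's own proof is essentially a list of citations to precisely this machinery.

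There is, however, a concrete confusion in your identification of $P_\Gamma(\Sigma)$ and of the map $\trop_{\mathfrak S}$. You set $P_\Gamma(\Sigma) := \widetilde W_\Gamma(\Sigma)\big|_{\text{well-spaced}}$, but by the paper's own Definition in Section~\ref{sec:moduli-well-spaced} this is exactly $W_\Gamma(\Sigma)$; both already carry the radial alignment. So your description of $\trop_{\mathfrak S}$ as ``forget the alignment'' collapses to the identity and cannot account for the genuinely finite, non-injective behaviour asserted in the theorem. In the paper, $P_\Gamma(\Sigma)$ is defined as the toroidal skeleton of $\mathcal W_\Gamma(Z)$ itself, i.e.\ the cone complex built from the stalks of the characteristic monoid of the minimal logarithmic structure, and the finiteness of $\trop_{\mathfrak S}$ comes from \emph{saturation}: the minimal monoid of a logarithmic stable map is only determined up to a finite-index sublattice inside the naive tropical cone, and the degree of $\trop_{\mathfrak S}$ over a cone is this saturation index (see~\cite{R15b,R16,Wis16b}). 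Once you replace ``forget the alignment'' with ``compare the skeleton cone to the tropical cone via the saturation morphism,'' your argument goes through and agrees with the paper's.
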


\begin{proof}
With the identification of the tropical maps that arise as tropicalizations of one-parameter families, the proof of the result is a cosmetic variation on similar results in the literature~\cite{CMR14a,R15b,R16}. By Theorem~\ref{thm: realizability}, the tropicalization of any family of logarithmic stable maps over a valuation ring is well-spaced. Once this is established, the continuity, functoriality, and finiteness of $\trop_{\mathfrak S}$ follow from~\cite[Theorem 2.6.2]{R16} and the uniqueness of minimal morphisms of logarithmic schemes up to saturation~\cite{Wis16b}. The saturation index of a combinatorial type $(\Theta,\preccurlyeq)$ is equal to the cardinality of the fibers of $\trop_{\mathfrak S}$, as explained in~\cite{R15b,R16}. Since $\mathcal W_\Gamma(Z)$ is a toroidal compactification, the existence of a section from the skeleton follows from results of Thuillier~\cite{ACP,Thu07}. Compatbility with forgetful and evaluation morphisms follows from~\cite[Theorem 1.1]{U13}. 
\end{proof}

\bibliographystyle{siam} 
\bibliography{EllipticStableMaps}

\begin{thebibliography}{10}

\bibitem{ACP}
{\sc D.~Abramovich, L.~Caporaso, and S.~Payne}, {\em The tropicalization of the
  moduli space of curves}, Ann. Sci. {\'E}c. Norm. Sup{\'e}r., 48 (2015),
  pp.~765--809.

\bibitem{AC11}
{\sc D.~Abramovich and Q.~Chen}, {\em Stable logarithmic maps to
  {D}eligne-{F}altings pairs {II}}, Asian J. Math., 18 (2014), pp.~465--488.

\bibitem{ACGS15}
{\sc D.~Abramovich, Q.~Chen, M.~Gross, and B.~Siebert}, {\em {Decomposition of
  degenerate Gromov-Witten invariants}}, In preparation,  (2015).

\bibitem{ACMUW}
{\sc D.~Abramovich, Q.~Chen, S.~Marcus, M.~Ulirsch, and J.~Wise}, {\em
  Skeletons and fans of logarithmic structures}, in Nonarchimedean and Tropical
  Geometry, M.~Baker and S.~Payne, eds., Simons Symposia, Springer, 2016,
  pp.~287--336.

\bibitem{ACMW}
{\sc D.~Abramovich, Q.~Chen, S.~Marcus, and J.~Wise}, {\em Boundedness of the
  space of stable logarithmic maps}, J. Eur. Math. Soc. (to appear)
  arXiv:1408.0869,  (2014).

\bibitem{AMW12}
{\sc D.~Abramovich, S.~Marcus, and J.~Wise}, {\em {Comparison theorems for
  Gromov--Witten invariants of smooth pairs and of degenerations}}, in Ann.
  Inst. Fourier, vol.~64, 2014, pp.~1611--1667.

\bibitem{AW}
{\sc D.~Abramovich and J.~Wise}, {\em {Birational invariance in logarithmic
  Gromov-Witten theory}}, arXiv:1306.1222,  (2013).

\bibitem{AK}
{\sc A.~Altman and S.~Kleiman}, {\em Introduction to {G}rothendieck duality
  theory}, Lecture Notes in Mathematics, Vol. 146, Springer-Verlag, Berlin-New
  York, 1970.

\bibitem{BPR16}
{\sc M.~Baker, S.~Payne, and J.~Rabinoff}, {\em Nonarchimedean geometry,
  tropicalization, and metrics on curves}, Algebr. Geom., 3 (2016),
  pp.~63--105.

\bibitem{BNR19}
{\sc L.~Battistella, N.~Nabijou, and D.~Ranganathan}, {\em Curve counting in
  genus one: elliptic singularities and relative geometry}, arXiv:1907.00024,
  (2019).

\bibitem{Ber90}
{\sc V.~G. Berkovich}, {\em Spectral theory and analytic geometry over
  non-Archimedean fields}, vol.~33, American Mathematical Society, 1990.

\bibitem{BBM14}
{\sc B.~Bertrand, E.~Brugall{\'e}, and G.~Mikhalkin}, {\em Genus 0
  characteristic numbers of the tropical projective plane}, Comp. Math., 150
  (2014), pp.~46--104.

\bibitem{CCUW}
{\sc R.~Cavalieri, M.~Chan, M.~Ulirsch, and J.~Wise}, {\em A moduli stack of
  tropical curves}, arXiv preprint arXiv:1704.03806,  (2017).

\bibitem{CJM1}
{\sc R.~Cavalieri, P.~Johnson, and H.~Markwig}, {\em Tropical {H}urwitz
  numbers}, J. Algebraic Combin., 32 (2010), pp.~241--265.

\bibitem{CMR14a}
{\sc R.~Cavalieri, H.~Markwig, and D.~Ranganathan}, {\em {Tropicalizing the
  space of admissible covers}}, {Math. Ann.}, 364 (2016), pp.~1275--1313.

\bibitem{Che10}
{\sc Q.~Chen}, {\em Stable logarithmic maps to {D}eligne-{F}altings pairs {I}},
  Ann. of Math., 180 (2014), pp.~341--392.

\bibitem{CFPU}
{\sc M.-W. Cheung, L.~Fantini, J.~Park, and M.~Ulirsch}, {\em Faithful
  realizability of tropical curves}, Int. Math. Res. Not.,  (2015), p.~rnv269.

\bibitem{Gi12}
{\sc W.~Gillam}, {\em Logarithmic stacks and minimality}, Int. J. Math., 23
  (2012).

\bibitem{Gro14}
{\sc A.~Gross}, {\em Correspondence theorems via tropicalizations of moduli
  spaces}, arXiv:1401.4626,  (2014).

\bibitem{Gro15}
\leavevmode\vrule height 2pt depth -1.6pt width 23pt, {\em Intersection theory
  on tropicalizations of toroidal embeddings}, arXiv preprint arXiv:1510.04604,
   (2015).

\bibitem{GS13}
{\sc M.~Gross and B.~Siebert}, {\em {Logarithmic Gromov-Witten invariants}}, J.
  Amer. Math. Soc., 26 (2013), pp.~451--510.

\bibitem{HK12}
{\sc D.~Helm and E.~Katz}, {\em {Monodromy filtrations and the topology of
  tropical varieties}}, {Can. J. Math.}, 64 (2012), pp.~845--868.

\bibitem{Hori03}
{\sc K.~Hori, S.~Katz, A.~Klemm, R.~Pandharipande, R.~Thomas, C.~Vafa,
  R.~Vakil, and E.~Zaslow}, {\em Mirror symmetry}, vol.~1 of Clay Mathematics
  Monographs, American Mathematical Society, Providence, RI, 2003.
\newblock With a preface by Vafa.

\bibitem{JR17}
{\sc D.~Jensen and D.~Ranganathan}, {\em {Brill-Noether theory for curves of a
  fixed gonality}}, arXiv:1701.06579,  (2017).

\bibitem{Kat00}
{\sc F.~Kato}, {\em Log smooth deformation and moduli of log smooth curves},
  Int. J. Math., 11 (2000), pp.~215--232.

\bibitem{Kat89}
{\sc K.~Kato}, {\em {Logarithmic structures of Fontaine-Illusie}}, Algebraic
  analysis, geometry, and number theory (Baltimore, MD, 1988),  (1989),
  pp.~191--224.

\bibitem{KatLift}
{\sc E.~Katz}, {\em Lifting tropical curves in space and linear systems on
  graphs}, Adv. Math., 230 (2012), pp.~853--875.

\bibitem{LR15}
{\sc Y.~Len and D.~Ranganathan}, {\em Enumerative geometry of elliptic curves
  on toric surfaces}, To appear in Isr. Math. J. (arXiv:1510.08556),  (2017).

\bibitem{MR16}
{\sc T.~Mandel and H.~Ruddat}, {\em {Descendant log Gromov-Witten invariants
  for toric varieties and tropical curves}}, arXiv:1612.02402,  (2016).

\bibitem{Mi03}
{\sc G.~Mikhalkin}, {\em Enumerative tropical geometry in {${\mathbb{R}^2}$}},
  J. Amer. Math. Soc, 18 (2005), pp.~313--377.

\bibitem{logpic}
{\sc S.~Molcho and J.~Wise}, {\em {The logarithmic Picard group and its
  tropicalization}}, arXiv:1807.11364,  (2018).

\bibitem{Ni09}
{\sc T.~Nishinou}, {\em Correspondence theorems for tropical curves},
  arXiv:0912.5090,  (2009).

\bibitem{NS06}
{\sc T.~Nishinou and B.~Siebert}, {\em Toric degenerations of toric varieties
  and tropical curves}, Duke Math. J., 135 (2006), pp.~1--51.

\bibitem{Par11}
{\sc B.~Parker}, {\em {Gromov Witten invariants of exploded manifolds}},
  arXiv:1102.0158,  (2011).

\bibitem{R15b}
{\sc D.~Ranganathan}, {\em {Skeletons of stable maps I: rational curves in
  toric varieties}}, J. Lond. Math. Soc., 95 (2017), pp.~804--832.

\bibitem{R16}
\leavevmode\vrule height 2pt depth -1.6pt width 23pt, {\em {Skeletons of stable
  maps II: superabundant geometries}}, Res. Math. Sci., 4 (2017).

\bibitem{R15a}
\leavevmode\vrule height 2pt depth -1.6pt width 23pt, {\em {Superabundant
  curves and the Artin fan}}, Int. Math. Res. Not., 2017 (2017),
  pp.~1103--1115.

\bibitem{R19}
\leavevmode\vrule height 2pt depth -1.6pt width 23pt, {\em {Logarithmic
  Gromov-Witten theory with expansions}}, arXiv:1903.09006,  (2019).

\bibitem{RSW17A}
{\sc D.~Ranganathan, K.~Santos-Parker, and J.~Wise}, {\em Moduli of stable maps
  in genus one {\it \&} logarithmic geometry {I}}, arXiv:1708.02359 Geom. Top.
  (to appear),  (2019).

\bibitem{RW19}
{\sc D.~Ranganathan and J.~Wise}, {\em Rational curves in the logarithmic
  multiplicative group}, arXiv:1901.08489 Proc. Amer. Math. Soc. (to appear),
  (2019).

\bibitem{Rim}
{\sc D.~S. Rim}, {\em Formal deformation theory}, in Groupes de Monodromie en
  G{\'e}om{\'e}trie Alg{\'e}brique, Springer, 1972, pp.~32--132.

\bibitem{Smyth}
{\sc D.~I. {Smyth}}, {\em {Modular compactifications of the space of pointed
  elliptic curves. I.}}, Comp. Math., 147 (2011), pp.~877--913.

\bibitem{Sp-thesis}
{\sc D.~E. Speyer}, {\em Tropical geometry}, PhD thesis, University of
  California, Berkeley, 2005.

\bibitem{Sp07}
\leavevmode\vrule height 2pt depth -1.6pt width 23pt, {\em {Parameterizing
  tropical curves. I: Curves of genus zero and one.}}, {Algebra Number Theory},
  8 (2014), pp.~963--998.

\bibitem{Tev07}
{\sc J.~Tevelev}, {\em Compactifications of subvarieties of tori}, Amer. J.
  Math., 129 (2007), pp.~1087--1104.

\bibitem{Thu07}
{\sc A.~Thuillier}, {\em {G{\'e}om{\'e}trie toro{\"\i}dale et g{\'e}om{\'e}trie
  analytique non archim{\'e}dienne. Application au type d'homotopie de certains
  sch{\'e}mas formels}}, Manuscripta Math., 123 (2007), pp.~381--451.

\bibitem{Torch-thesis}
{\sc C.~Torchiani}, {\em Enumerative geometry of rational and elliptic tropical
  curves in $\mathbf R^m$}, PhD thesis, Technische Universit{\"a}t
  Kaiserslautern, 2014.

\bibitem{Tyo12}
{\sc I.~Tyomkin}, {\em Tropical geometry and correspondence theorems via toric
  stacks}, Math. Ann., 353 (2012), pp.~945--995.

\bibitem{U13}
{\sc M.~Ulirsch}, {\em Functorial tropicalization of logarithmic schemes: The
  case of constant coefficients}, arXiv:1310.6269,  (2013).

\bibitem{U14b}
\leavevmode\vrule height 2pt depth -1.6pt width 23pt, {\em Tropicalization is a
  non-archimedean analytic stack quotient}, Math. Res. Lett. arXiv:1410.2216,
  (To appear).

\bibitem{VZ08}
{\sc R.~Vakil and A.~Zinger}, {\em A desingularization of the main component of
  the moduli space of genus-one stable maps into $\mathbb{P}^n$}, Geom. Top.,
  12 (2008), pp.~1--95.

\bibitem{obs}
{\sc J.~Wise}, {\em Obstruction theories and virtual fundamental classes},
  arXiv preprint arXiv:1111.4200,  (2011).

\bibitem{Wis16a}
\leavevmode\vrule height 2pt depth -1.6pt width 23pt, {\em Moduli of morphisms
  of logarithmic schemes}, {Algebra Number Theory}, 10 (2016), pp.~695--735.

\bibitem{Wis16b}
\leavevmode\vrule height 2pt depth -1.6pt width 23pt, {\em Uniqueness of
  minimal morphisms of logarithmic schemes}, arXiv:1601.02968,  (2016).

\bibitem{Yu14a}
{\sc T.~Y. Yu}, {\em Gromov compactness in non-archimedean analytic geometry},
  J. Reine Angew. Math. (Crelle's Journal) arXiv:1401.6452,  (To appear).

\end{thebibliography}

\end{document}